\author{Neal Madras\affiliationmark{1}\thanks{Supported in part by a Discovery Grant from NSERC Canada}
  \and Justin M. Troyka\affiliationmark{2}\thanks{Supported by funding from the Faculty of Science at York University towards postdoctoral position}}
\title[Bounded affine permutations I\@. Pattern avoidance and enumeration]{Bounded affine permutations I. \\ Pattern avoidance and enumeration}
\affiliation{
  % one line per affiliation, no postal codes, grant numbers or similar
  Department of Mathematics and Statistics, York University, Canada\\
  Department of Mathematics and Computer Science, Davidson College, United States}
\keywords{permutation, affine permutation, permutation pattern, asymptotic enumeration}
\newcommand{\sfrac}[2]{\textstyle\frac{#1}{#2}\displaystyle}
\newcommand{\negativespace}{\vspace{-1pc}}
\newcommand{\Z}{\mathbb{Z}}
\newcommand{\R}{\mathbb{R}}
\newcommand{\C}{\mathbb{C}}
\newcommand{\q}[1]{\widetilde{#1}}
\newcommand{\w}[1]{a}
\newcommand{\E}{\widetilde{S}^{/\!/}}
\newcommand{\Av}{S}
\newcommand{\AvA}{\widetilde{S}}
\newcommand{\AvBA}{\E}
\newcommand{\CC}{\mathcal{C}}
\newcommand{\aff}[1]{{\oplus#1}}
\newcommand{\ind}[1]{#1^{\not\oplus}}
\newcommand{\first}{a}
\newcommand{\OO}{\mathcal{O}}
\newcommand{\gr}[1]{\textup{gr}(#1)}
\newcommand{\upgr}[1]{\overline{\textup{gr}}(#1)}
\newcommand{\logr}[1]{\underline{\textup{gr}}(#1)}
\newtheorem{thm}{Theorem}
\newtheorem{prop}[thm]{Proposition}
\newtheorem{lem}[thm]{Lemma}
\newtheorem{cor}[thm]{Corollary}
\newtheorem{conj}[thm]{Conjecture}
\theoremstyle{definition}
\newtheorem{defn}[thm]{Definition}
\newtheorem{expl}[thm]{Example}
\newtheorem{rem}[thm]{Remark}
\newcounter{i}
\newcommand{\drawpermutation}[3][1]{\begin{tikzpicture}[scale=0.5,baseline=(O.base)]
\setcounter{i}{0}
\foreach \j in {#2} {
\stepcounter{i}
\draw (0.5*#1,\value{i}*#1) -- (#3*#1+0.5*#1,\value{i}*#1);
\draw (\value{i}*#1,0.5*#1) -- (\value{i}*#1,#3*#1+0.5*#1);
\node at (\value{i}*#1, 0) {\footnotesize$\j$};
\draw[fill] (\value{i}*#1, \j*#1) circle (0.2);
}
\node (O) at (#3*0.5*#1,#3*0.5*#1) {};
\end{tikzpicture}}
\newcommand{\drawpattern}[4][1]{\begin{tikzpicture}[scale=0.5,baseline=(O.base)]
\foreach \x in {1,...,#3} {
\draw (0.5*#1,\x*#1) -- (#3*#1+0.5*#1,\x*#1);
\draw (\x*#1,0.5*#1) -- (\x*#1,#3*#1+0.5*#1);
}
\setcounter{i}{0}
\foreach \j in {#2} {
\stepcounter{i}
\node at (\value{i}*#1, 0) {\footnotesize$\j$};
\draw[fill] (\value{i}*#1, \j*#1) circle (0.2);
\foreach \k in {#4} {
\ifnum \j=\k
\draw [thick] (\value{i}*#1, \j*#1) circle (0.4);
\fi
}
}
\node (O) at (#3*0.5*#1,#3*0.5*#1) {};
\end{tikzpicture}}
\begin{document}
\publicationdetails{22}{2021}{2}{1}{6178}
\maketitle
\begin{abstract}
We introduce a new boundedness condition for affine permutations, motivated by the fruitful concept of periodic boundary conditions in statistical physics.
We study pattern avoidance in bounded affine permutations.
In particular, we show that if $\tau$ is one of the finite
increasing oscillations, then every $\tau$-avoiding affine
permutation satisfies the boundedness condition. We also explore the enumeration of pattern-avoiding affine permutations that can be decomposed into blocks, using analytic methods to relate their exact and asymptotic enumeration to that of the underlying ordinary permutations. Finally, we perform exact and asymptotic enumeration of the set
of all bounded affine permutations of size $n$.
A companion paper will focus on avoidance of monotone decreasing patterns in bounded affine permutations.
\end{abstract}

\section{Introduction}
   \label{sec-intro}
Pattern-avoiding permutations have been studied actively in the combinatorics literature for the past four decades.  
(See Section \ref{sec-definitions} for definitions of terms we use.)
Some sources on permutation patterns include: \cite{Bevan} for essential terminology, \cite[Ch.\ 4]{BonaCP} for a textbook introduction, and \cite{VatterSurvey} for an in-depth survey of the literature.
Pattern-avoiding permutations arise in a variety of mathematical contexts, particularly algebra and the analysis of algorithms.
Research such as \cite{Crites, BilleyCrites} have extended these investigations by considering
affine permutations that avoid one or more (ordinary) permutations as patterns.

\begin{defn} 
   \label{def.affine}
An \emph{affine permutation of size $n$} is a bijection $\sigma \colon \Z \to \Z$ such that:
\begin{enumerate}[(i)]
\item $\sigma(i+n) \,=\, \sigma(i)\,+\, n$ for all 
$i \in \Z$, and 
\item $\sum_{i=1}^n \sigma(i) = \sum_{i=1}^n i$.
\end{enumerate}\end{defn}

Condition (ii) % can also be written as $\sum_{i=1}^n (\sigma(i) - i) = 0$. It 
can be viewed as a ``centering'' condition, since any
bijection satisfying (i) can be made to satisfy (ii) by adding a constant to the function.
The affine permutations of size $n$ form an infinite Coxeter group under composition, with $n$ generators; see Section 8.3 of Bj\"orner and Brenti \cite{BB} for a detailed look at affine permutations from this perspective.

For any given size $n>1$, there are infinitely many affine permutations of size $n$; indeed, for some patterns such as $\tau=321$, there are infinitely many affine permutations of size $n$ that avoid $\tau$.  One can view the following definition
as a reasonable attempt to make these sets finite, but as we describe below and in Section \ref{sec-motiv}, there are more compelling reasons for considering this
definition.

\begin{defn} 
   \label{def.bounded}
A \emph{bounded affine permutation} of size $n$ is an affine permutation $\sigma$ of size $n$ such that $|\sigma(i) - i| < n$ for all $i$. \end{defn}

Figure \ref{fig:boundedaffine} illustrates an example of a bounded affine permutation.

\begin{figure}
\[
\begin{tikzpicture}[scale=0.25]
\draw (1,5) [fill=black] circle (.3);
\draw (2,2) [fill=black] circle (.3);
\draw (3,4) [fill=black] circle (.3);
\draw (4,9) [fill=black] circle (.3);
\draw (5,0) [fill=black] circle (.3);
\draw (6,1) [fill=black] circle (.3);
\draw (7,11) [fill=black] circle (.3);
\draw (8,8) [fill=black] circle (.3);
\draw (9,10) [fill=black] circle (.3);
\draw (10,15) [fill=black] circle (.3);
\draw (11,6) [fill=black] circle (.3);
\draw (12,7) [fill=black] circle (.3);
\draw (13,17) [fill=black] circle (.3);
\draw (14,14) [fill=black] circle (.3);
\draw (15,16) [fill=black] circle (.3);
\draw (16,21) [fill=black] circle (.3);
\draw (17,12) [fill=black] circle (.3);
\draw (18,13) [fill=black] circle (.3);
\draw (19,23) [fill=black] circle (.3);
\draw (20,20) [fill=black] circle (.3);
\draw (21,22) [fill=black] circle (.3);
\draw (22,27) [fill=black] circle (.3);
\draw (23,18) [fill=black] circle (.3);
\draw (24,19) [fill=black] circle (.3);
\draw [thick] (-2,8) -- (27,8);
\draw [thick] (8,-2) -- (8,27);
\node at (-1,-0.5) {$\iddots$};
\node at (26,26) {$\iddots$};
\draw [dashed,thick] (-2,4) -- (21,27);
\draw [dashed,thick] (4,-2) -- (27,21);
\end{tikzpicture} \]
\caption{A bounded affine permutation of size $6$, whose values on $1,\ldots,6$ are $2, 7, -2, -1, 9, 6$. For the affine permutation to be a bounded affine permutation of size $6$, its entries must all lie strictly between the dashed lines.}
\label{fig:boundedaffine}
\end{figure}
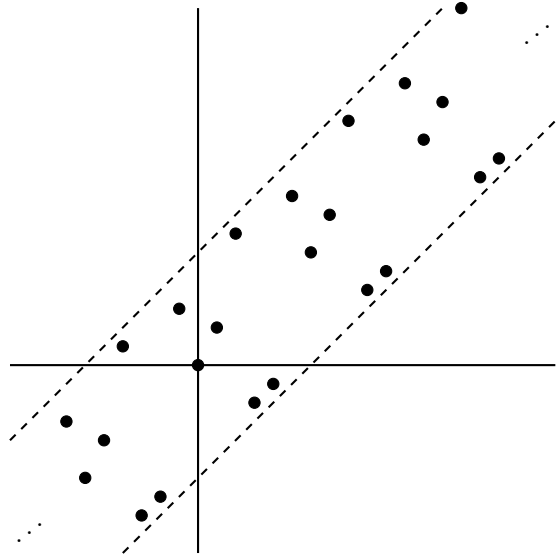

\begin{rem}
Affine permutations with a different boundedness condition were introduced by Knutson, Lam, and Speyer \cite{KLS}, who used them to study the totally non-negative Grassmannian and positroids. The bounded affine permutations in our paper are not the same as those.
\end{rem}

\begin{defn} \label{def:S}
Let $S_n$ denote the set of permutations of size $n$, let $\widetilde{S}_n$ denote the set of affine permutations of size $n$, and let $\E_n$ denote the set of bounded affine permutations of size $n$.
We also define
\[
S \;:=\;  \bigsqcup_{n\ge0}S_n
     \hspace{5mm}\hbox{and}\hspace{5mm}
     \widetilde{S}   \;:=\;  \bigsqcup_{n\ge1}\widetilde{S}_n\,
     \hspace{5mm}\hbox{and}\hspace{5mm}
     \E   \;:=\;  \bigsqcup_{n\ge1}\E_n\,. \]
\end{defn}

\begin{rem} \label{rem:disjoint1}
If $d \mid n$, then every element of $\widetilde{S}_d$ is also an element of $\widetilde{S}_n$. If $\omega$ is an affine permutation of size $n$, then $n$ need not be the smallest possible size of $\omega$. The notation in Definition \ref{def:S} uses a disjoint union so that each element of $\widetilde{S}$ comes equipped with a ``size''. As a result, an affine permutation in $\widetilde{S}_n$ and the same permutation considered as an element of $\widetilde{S}_{kn}$ (for $k \ge 2$) are different objects in $\widetilde{S}$, distinguished by their size. Our definition using disjoint union is natural for enumeration purposes: our count of affine permutations of size $n$ with a given property does not require that $n$ is the smallest possible size.
\end{rem}

\begin{rem} \label{rem:disjoint2}
Of course $\E_n$ is a proper subset of $\widetilde{S}_n$ for each $n$, and so $\E$ is a proper subset of $\widetilde{S}$. In contrast, every element of $\widetilde{S}_n$ is an element of $\E_{kn}$ for some $k$, so $\bigcup_{n\ge1} \E_n = \bigcup_{n\ge1} \widetilde{S}_n$. Using disjoint union in Definition \ref{def:S} means that not every affine permutation (equipped with a size) is a bounded affine permutation.
\end{rem}

If we view a permutation $\pi\in S_n$ as a
bijection on $[n]$, then we can extend it periodically by 
Equation (i) of Definition \ref{def.affine}
to a bijection $\oplus\pi$ on $\mathbb{Z}$; that is,
\[ \oplus \pi(i+kn) = \pi(i) + kn \quad \text{for $i \in [n]$ and $k \in \Z$}. \]
Observe that $\oplus \pi \in \E_n$ (see 
Figure \ref{fig.affinepi}). We call $\oplus \pi$ the \emph{infinite sum} of $\pi$. The map $\pi \mapsto \oplus \pi$ is an injection from $S_n$ into $\E_n$.

\setlength{\unitlength}{1.2mm}
\begin{figure}
% \newsavebox{\ddott}
%\savebox{\ddott}{\circle*{0.8}}
% \savebox{\dott}(0,0){\circle*{0.8}}
%\newsavebox{\ddash}
%\savebox{\ddash}{\line(1,1){1}}
  \begin{center}
%\begin{picture}(140,150)
%
%\put(40,90){
\begin{picture}(70,70)(-25,-25)
\put(-23,0){\vector(1,0){68}}
\put(0,-23){\vector(0,1){68}}
\put(2,2){\line(1,0){20}}
\put(2,2){\line(0,1){20}}
\put(2,22){\line(1,0){20}}
\put(22,2){\line(0,1){20}}
\put(2,0){\line(0,1){1}}
\put(1,-3){$1$}
\put(22,0){\line(0,1){1}}
\put(20,-3){$n$}
\put(0,2){\line(1,0){1}}
\put(-3,1){$1$}
\put(0,22){\line(1,0){1}}
\put(-3,21){$n$}
\put(13,8){\huge{${\pi}$}}
\put(2,8){\circle*{2}}
\put(4,2){\circle*{2}}
\put(6,16){\circle*{2}}
\put(8,22){\circle*{2}}
\put(22,6){\circle*{2}}
%
%\multiput(-22,-2)(4,4){17}{$\line(1,1){1}$}
%\put(-22,-2){\textcolor{blue}{\line(1,1){47}}}
%\put(-2,-22){\textcolor{blue}{\line(1,1){47}}}
\put(-22,-2){\line(1,1){47}}
\put(-2,-22){\line(1,1){47}}
\put(24,0){\line(0,1){1}}
\put(24,-3){$n{+}1$}
\put(42,0){\line(0,1){1}}
\put(40,-3){$2n$}
\put(24,24){\line(1,0){20}}
\put(24,24){\line(0,1){20}}
\put(24,44){\line(1,0){20}}
\put(44,24){\line(0,1){20}}
\put(26,30){\large{copy of ${\pi}$}}
\put(24,30){\circle*{2}}
\put(26,24){\circle*{2}}
\put(28,38){\circle*{2}}
\put(30,44){\circle*{2}}
\put(44,28){\circle*{2}}
\put(-20,-20){\line(1,0){20}}
\put(-20,-20){\line(0,1){20}}
\put(-20,0){\line(1,0){20}}
\put(0,-20){\line(0,1){20}}
\put(-18,-14){\large{copy of ${\pi}$}}
\put(-20,-14){\circle*{2}}
\put(-18,-20){\circle*{2}}
\put(-16,-6){\circle*{2}}
\put(-14,0){\circle*{2}}
\put(0,-16){\circle*{2}}
%
%\put(12,-25){$\cdots \oplus\sigma\oplus \sigma\oplus\cdots  \;\; \in {\bf BA}_N$}
%
\end{picture}
\caption{Schematic plot of a permutation $\pi\in S_n$ and 
its periodic extension $\oplus \pi \in \E_n$.  For an affine
permutation of size $n$ to be bounded, all points of the 
plot must lie on or between the two diagonal lines.
\label{fig.affinepi}}  
  \end{center}
\end{figure}
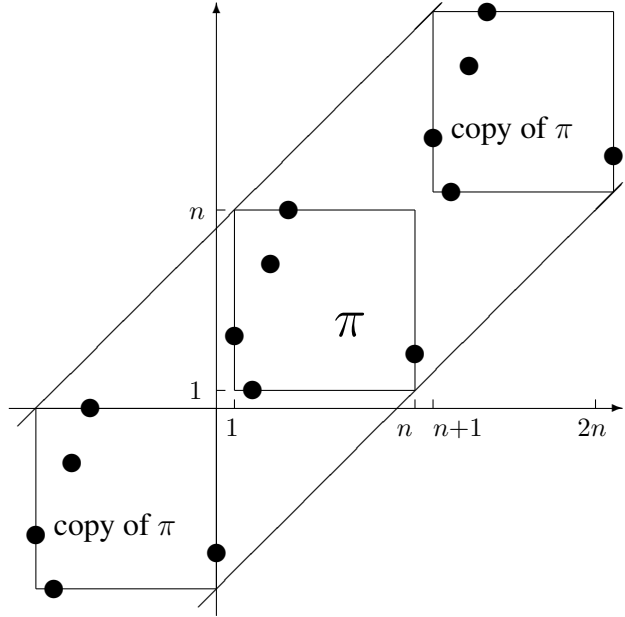

Most of this paper concerns the set of affine permutations or bounded affine permutations that avoid some fixed set of permutations $R$; these sets are denoted $\AvA(R)$ and $\AvBA(R)$ respectively, and we define pattern avoidance and related notions in Section \ref{sec-definitions}.

Let $\tau \in S_k$. It is routine to check that, if $\tau_1>\tau_k$ (or more generally if $\tau$ is sum-indecomposable), then $\sigma\oplus \pi$ avoids $\tau$ whenever $\sigma$ and $\pi$ both avoid $\tau$.
Thus the injection $\pi\mapsto \oplus\pi$ mentioned 
above is also an injection from $S_n(\tau)$ into $\E_n(\tau)$.
This proves that $|S_n(\tau)|\,\leq\,|\E_n(\tau)|$.
It is harder to find a good general upper bound for $|\E_n(\tau)|$.
We pose the following conjecture.

\begin{conj}
    \label{conj1}
The proper growth rate $\lim_{n\rightarrow\infty}|\E_n(\tau)|^{1/n}$ exists and equals the Stanley--Wilf limit $L(\tau):=\lim_{n\rightarrow\infty}|S_n(\tau)|^{1/n}$
for every sum-indecomposable pattern $\tau$.  
\end{conj}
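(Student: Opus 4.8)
\noindent\textbf{A strategy for Conjecture~\ref{conj1}, and the main obstacle.}
The inequality $|S_n(\tau)|\le|\E_n(\tau)|$ proved above, together with Fekete's lemma applied to the supermultiplicative sequence $\bigl(|S_n(\tau)|\bigr)_n$ --- supermultiplicativity being exactly the observation quoted above, that $\oplus$ sends a pair of $\tau$-avoiders to a $\tau$-avoider when $\tau$ is sum-indecomposable --- already gives $\liminf_n|\E_n(\tau)|^{1/n}\ge L(\tau)$. So the whole problem is the matching upper bound $\limsup_n|\E_n(\tau)|^{1/n}\le L(\tau)$; proving it establishes both the existence of the limit and its value.

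The first step would be to pass from bounded affine permutations to ordinary (long) permutations by a windowing argument. Suppose $\tau\in S_k$ is sum-indecomposable and $\sigma\in\E_n$ contains $\tau$ at positions $p_1<\dots<p_k$. Translating by a multiple of $n$ --- which shifts positions and values by the same amount, hence preserves the occurrence --- we may assume $p_1\in\{1,\dots,n\}$. If some gap satisfied $p_{a+1}-p_a>2n$, then for all $b\le a<c$ we would have $p_c-p_b\ge p_{a+1}-p_a>2n$, and so, using $|\sigma(i)-i|<n$,
\[
\sigma(p_c)-\sigma(p_b)\;>\;(p_c-n)-(p_b+n)\;=\;(p_c-p_b)-2n\;>\;0,
\]
which forces $\tau=\tau|_{\{1,\dots,a\}}\oplus\tau|_{\{a+1,\dots,k\}}$, contradicting sum-indecomposability. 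Hence all gaps are at most $2n$ and $p_k\le(2k-1)n$. Therefore $\sigma$ avoids $\tau$ if and only if the ordinary permutation $\operatorname{std}\bigl(\sigma(1),\dots,\sigma((2k-1)n)\bigr)$ avoids $\tau$, and the resulting map $\E_n(\tau)\to S_{(2k-1)n}(\tau)$ is injective: from the standardized window one recovers $\sigma$ using periodicity, boundedness, and the centering condition (ii) of Definition~\ref{def.affine}. Since $|S_m(\tau)|\le L(\tau)^m$ for every $m$ (Fekete again), this yields $\limsup_n|\E_n(\tau)|^{1/n}\le L(\tau)^{\,2k-1}$.

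The bound $L(\tau)^{2k-1}$ is of course far from the conjectured $L(\tau)$: inflating a bounded affine permutation of size $n$ into a permutation of size $(2k-1)n$ is too lossy. To recover the sharp rate one must exploit that the permutations arising as such windows are highly structured --- up to standardization they are periodic with period $n$, and their points lie in only $O(k)$ diagonal bands of width $n$. The plan would be to prove a substitution-style structure theorem: \emph{every $\tau$-avoiding bounded affine permutation of size $n$ is an ``affine inflation'' of a skeleton affine permutation of size $o(n)$ (ideally $O_k(1)$) by ordinary $\tau$-avoiding permutations of sizes $n_1,\dots,n_r$ with $\sum_j n_j=n-o(n)$.} Granting this, supermultiplicativity of $\bigl(|S_m(\tau)|\bigr)_m$ gives $|S_{n_1}(\tau)|\cdots|S_{n_r}(\tau)|\le|S_{n_1+\dots+n_r}(\tau)|\le L(\tau)^{\,n-o(n)}$, and summing over the $\operatorname{poly}(n)$ choices of skeleton and of composition $(n_1,\dots,n_r)$ gives $|\E_n(\tau)|\le\operatorname{poly}(n)\cdot L(\tau)^{\,n(1+o(1))}$, completing the argument.

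Establishing this structure theorem is the crux, and the reason we can so far only conjecture the result: one must show that avoidance of a sum-indecomposable $\tau$ forces the ``winding'' part of a bounded affine permutation to be encodable by a skeleton of negligible size --- in analogy with, but not reducible to, the substitution decomposition of ordinary permutation classes --- and the wrap-around geometry makes it unclear how to carry out a clean decomposition. Even weaker statements would suffice, e.g.\ that the skeleton may be taken of size $O(\log n)$, or that all but $o(n)$ of the positions lie inside the inflated $\tau$-avoiding blocks.
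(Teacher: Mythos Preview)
This statement is a \emph{conjecture} in the paper, not a theorem; the paper offers no proof and explicitly says it cannot even establish existence of the limit in general. Your write-up is likewise not a proof but a strategy with an honestly acknowledged obstacle, so there is no ``paper's proof'' to compare against.

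On the pieces you do argue: the lower bound $\liminf_n|\E_n(\tau)|^{1/n}\ge L(\tau)$ is correct and is exactly the observation the paper makes just before stating the conjecture. Your windowing argument is also correct --- the gap bound (consecutive positions of an occurrence differ by at most $2n$, else $\tau$ would decompose) is a clean use of sum-indecomposability, and the injectivity claim checks out once unpacked (from comparisons of $\sigma(i)$ with $\sigma(j+\ell n)=\sigma(j)+\ell n$ one recovers $\lfloor(\sigma(i)-\sigma(j))/n\rfloor$ for all $i,j\in[1,n]$, and summing over $i$ together with the centering condition pins down each $\sigma(j)$). That said, the resulting bound $\limsup_n|\E_n(\tau)|^{1/n}\le L(\tau)^{2k-1}$ is considerably weaker than the paper's Proposition~\ref{prop.3L}, which by a shorter argument (window of length $n$, plus the at-most-$3^n$ choices for the value set $M(\sigma)$) already gives $\limsup_n|\E_n(\tau)|^{1/n}\le 3L(\tau)$.

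The real content of the conjecture, as you correctly identify, is closing the gap between $L(\tau)$ and any such crude upper bound. Your proposed ``affine inflation by a negligible skeleton'' structure theorem is a reasonable heuristic, but you give no indication of how to prove it, and the paper makes no such attempt either. What the paper \emph{does} establish is the conjecture in the special case where $\tau$ is a finite increasing oscillation (Section~\ref{sec:recognizing}): Theorem~\ref{thm:oscillation} shows that then $\AvA(\tau)$ is decomposable, so $\E(\tau)=\aff{\Av(\tau)}$, and Proposition~\ref{prop:exact}(b) gives $|S_n(\tau)|\le|\E_n(\tau)|\le n\,|S_n(\tau)|$, whence $\gr{\E(\tau)}=L(\tau)$. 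Outside of such structured cases the conjecture remains genuinely open.
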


We remark that the indecomposability condition in the conjecture is 
important; e.g.\ the only affine permutation that avoids $2143$ is the identity permutation.
We can prove that the conjecture holds for some 
specific choices of $\tau$, and that it holds when $\tau$ is an increasing oscillation (see Section \ref{sec:recognizing}), but in general we cannot
even prove that the proper growth rate $\gr{\E(\tau)}$ exists.
At least
we can show that the upper growth rate $\upgr{\E(\tau)}$ is
always finite by the following easy argument.

\begin{prop}
   \label{prop.3L}
Let $\tau$ be a pattern.  Then
$\limsup_{n\rightarrow\infty}|\E_n(\tau)|^{1/n} \,\leq \,3L(\tau),$ 
where $L(\tau)$ is the Stanley--Wilf limit.
\end{prop}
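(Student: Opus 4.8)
The plan is to bound $|\E_n(\tau)|$ by a product of three quantities, each controlled by the ordinary Stanley--Wilf limit $L(\tau)$. Given $\sigma \in \E_n$, the boundedness condition $|\sigma(i)-i|<n$ means that $\sigma$ maps the block $[1,n]$ into the interval $(1-n, 2n-1)$, which has length less than $3n$. So I would first encode $\sigma$ by recording the ``window'' $(\sigma(1),\ldots,\sigma(n))$, which is a sequence of $n$ distinct integers lying in an interval of $3n$ consecutive values; the rest of $\sigma$ is determined by periodicity. Normalizing that window to the relative order of its entries gives an honest permutation in $S_n$, call it $\pi$, together with the data of \emph{which} $n$ of the roughly $3n$ available slots are occupied.

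Next I would observe that if $\sigma$ avoids $\tau$, then the window permutation $\pi$ also avoids $\tau$: a $\tau$-pattern among the values $\sigma(1),\ldots,\sigma(n)$ is literally a $\tau$-pattern in $\sigma$, since $1,\ldots,n$ are $n$ positions in $\Z$ and $\sigma$ restricted to them is order-isomorphic to $\pi$. Hence the number of possible window permutations is at most $|S_n(\tau)|$. For each fixed window permutation there are at most $\binom{3n}{n}$ ways to choose which slots the entries occupy (and the centering condition (ii) only cuts this down further, so I can afford to ignore it). Therefore
\[
|\E_n(\tau)| \;\le\; |S_n(\tau)| \cdot \binom{3n}{n}.
\]
Taking $n$th roots and letting $n\to\infty$, the first factor contributes $L(\tau)$ (which exists by the Marcus--Tardos theorem / Stanley--Wilf, a standard fact I would cite), and $\binom{3n}{n}^{1/n} \to 3^3/2^2 = 27/4$. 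That would give $\limsup |\E_n(\tau)|^{1/n} \le \frac{27}{4} L(\tau)$, which is weaker than the claimed bound $3L(\tau)$.

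To get the sharper constant $3$, I would refine the counting of slot-choices. The key point is that the window values $\sigma(1) < \cdots$ are not an arbitrary $n$-subset of a $3n$-interval: because $\sigma$ is an affine permutation of size $n$, the full image is $\sigma(\Z) = \Z$, and the window values, reduced modulo $n$, must be a complete set of residues — equivalently, the $n$ chosen slots in the $3n$-window, reduced mod $n$, hit each residue class exactly once. Within each residue class there are only $3$ slots in the window, so the number of valid slot-choices is at most $3^n$, not $\binom{3n}{n}$. Combining this with $|S_n(\tau)|^{1/n}\to L(\tau)$ yields exactly $\limsup|\E_n(\tau)|^{1/n}\le 3L(\tau)$. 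I expect the main thing to get right is this last refinement: being careful that the decomposition ``window permutation $\times$ slot assignment'' is genuinely injective on $\E_n$ and that the residue-class constraint really does confine each of the $n$ entries to $3$ choices (the interval $(1-n,2n-1)$ has length $3n-2$, so each residue class mod $n$ has either $2$ or $3$ representatives in it, and the crude bound $3^n$ is safe). Everything else — the periodic extension being determined by the window, and a $\tau$-pattern in the window being a $\tau$-pattern in $\sigma$ — is routine.
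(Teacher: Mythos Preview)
Your proposal is correct and is essentially the paper's own argument: the paper also encodes $\sigma\in\E_n(\tau)$ by the pair $(M(\sigma),\sigma^{\dagger})$, where $M(\sigma)=\{\sigma(i):i\in[n]\}$ is your ``slot assignment'' and $\sigma^{\dagger}\in S_n(\tau)$ is your ``window permutation,'' and bounds the number of slot assignments by $3^n$ via the same residue-class observation. The only difference is expository---you first pass through the cruder $\binom{3n}{n}$ bound before refining, whereas the paper goes directly to $3^n$.
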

\begin{proof}
Observe that for any $\sigma\in\E_n$, the set of
integers $M(\sigma):=\{\sigma(i):i\in[n]\}$ must all be
distinct modulo $n$.  The boundedness condition tells us 
that $M(\sigma)\subseteq (-n,2n)$, so there are at most $3^n$
possible sets that $M(\sigma)$ could be as $\sigma$ 
varies over $\E_n$.

Given $\sigma\in\E_n(\tau)$, let $\sigma^{\dagger}$ be the
ordinary permutation in $S_n(\tau)$ 
that is order-isomorphic to the string $(\sigma(1),\sigma(2),\ldots,\sigma(n))$.
Then $M(\sigma)$ and $\sigma^{\dagger}$ determine $\sigma$:
indeed, the value of $\sigma(i)$ must be the 
$[\sigma^{\dagger}(i)]^\text{th}$ smallest element of $M(\sigma)$.  It follows that $|\E_n(\tau)|\leq 3^n|S_n(\tau)|$.
\end{proof}

Certain pattern-avoiding affine permutation classes exhibit a curious phenomenon: every affine permutation in the class is ``really'' just an ordinary finite permutation, in the sense that it is a diagonal shift of an infinite sum of an ordinary permutation (see Figure \ref{fig:decomposable} for an example). An affine permutation of this form is called \emph{decomposable}, and an affine permutation class in which every element has this form is called \emph{decomposable} as well. In Section \ref{sec:recognizing}, we characterize the pattern-avoiding affine permutation classes that are decomposable: they are exactly the ones that do not have the infinite increasing oscillation $\OO$ as an element (see Theorem \ref{thm:oscillation}).

\begin{figure}
\begin{center}
\begin{tikzpicture}[scale=0.25]
\draw (1,2) [fill=black] circle (.3);
\draw (2,4) [fill=black] circle (.3);
\draw (3,3) [fill=black] circle (.3);
\draw (4,1) [fill=black] circle (.3);
\draw (5,6) [fill=black] circle (.3);
\draw (6,5) [fill=black] circle (.3);
\draw (7,8) [fill=black] circle (.3);
\draw (8,10) [fill=black] circle (.3);
\draw (9,9) [fill=black] circle (.3);
\draw (10,7) [fill=black] circle (.3);
\draw (11,12) [fill=black] circle (.3);
\draw (12,11) [fill=black] circle (.3);
\draw (13,14) [fill=black] circle (.3);
\draw (14,16) [fill=black] circle (.3);
\draw (15,15) [fill=black] circle (.3);
\draw (16,13) [fill=black] circle (.3);
\draw (17,18) [fill=black] circle (.3);
\draw (18,17) [fill=black] circle (.3);
\draw (19,20) [fill=black] circle (.3);
\draw (20,22) [fill=black] circle (.3);
\draw (21,21) [fill=black] circle (.3);
\draw (22,19) [fill=black] circle (.3);
\draw (23,24) [fill=black] circle (.3);
\draw (24,23) [fill=black] circle (.3);
\draw [thick] (-2,8) -- (27,8);
\draw [thick] (8,-2) -- (8,27);
\draw [thick,dashed] (0.5,0.5) rectangle (6.5,6.5);
\draw [thick,dashed] (6.5,6.5) rectangle (12.5,12.5);
\draw [thick,dashed] (12.5,12.5) rectangle (18.5,18.5);
\draw [thick,dashed] (18.5,18.5) rectangle (24.5,24.5);
\node at (-1,-0.5) {$\iddots$};
\node at (26,26) {$\iddots$};
\end{tikzpicture}
\end{center}
\caption{This affine permutation of size $6$ is decomposable, because it is the infinite direct sum of the permutation $243165$, shifted diagonally downwards by $2$ units.}
\label{fig:decomposable}
\end{figure}
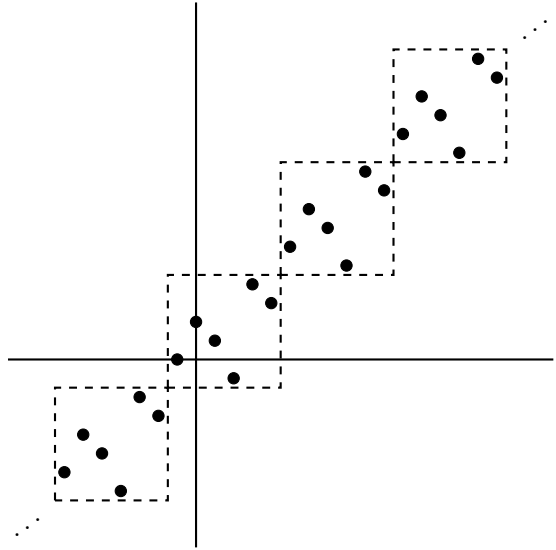

Decomposable classes are particularly easy to enumerate in terms of the corresponding class of ordinary permutations. In fact, regardless of whether $\AvA(R)$ is decomposable, we can enumerate the affine permutations in $\AvA(R)$ that are decomposable (assuming $R$ is a set of indecomposable permutations). These are just the diagonal shifts of infinite sums of (ordinary) permutations that avoid $R$; the set of affine permutations that arise in this way is denoted $\aff{\Av(R)}$. Sections \ref{sec:exact}, \ref{sec:schema}, and \ref{sec:asympt} explore the enumerative relationship between $\aff{\CC}$ and $\CC$ for a sum closed class $\CC$ of (ordinary) permutations. We find a simple formula relating the generating function $\widetilde{F}(x)$ for $\aff{\CC}$ and the generating function $F(x)$ for $\CC$ (Theorem \ref{thm:exactclass}):
\begin{equation} \widetilde{F}(x) = x\frac{d}{dx}\log(F(x)). \label{eq:exactclassintro} \end{equation}
In the process of proving this, we also prove an inequality (Proposition \ref{prop:exact}) showing that $|\CC_n| \le |\aff{C}_n| \le n\,|\CC_n|$. In particular, this means that the growth rates are the same.

The form of Equation \ref{eq:exactclassintro} is significant, appearing in many different guises throughout combinatorics and probability. There is a cluster of related ideas concerning the arrangement of combinatorial objects into a cycle structure. The enumeration of these cycles is linked to that of the constituent objects via Equation \ref{eq:exactclassintro}. Many instances of this relation involve the enumeration of various kinds of trees, or various kinds of paths that do not go below the axis (such as Dyck paths), as in the work of Banderier and Flajolet \cite[Sec.\ 4.1]{BF}. The same ideas are at play in many combinatorial proofs of the Lagrange Inversion Formula, first by Raney \cite{Raney} and later by Gessel \cite{Gessel}. The elementary fact underlying these phenomena is the Cycle Lemma of Dvoretsky and Motzkin \cite{DM}, along with its generalization by Spitzer \cite{Spitzer}; for details, see Cori \cite{Cori} and Dershowitz and Zaks \cite{DZ}.

In Sections \ref{sec:schema} and \ref{sec:asympt}, we extract from Equation \ref{eq:exactclassintro} an asymptotic enumeration of $\aff{\CC}_n$ in terms of the $\CC_n$, dependent on the status of $\CC$ as either a subcritical or supercritical sequence schema (defined in Section \ref{sec:schema}). If the class is subcritical and satisfies certain analytic properties, then $|\aff{\CC}_n|$ is asymptotically a constant times $n|\CC_n|$ (Theorem \ref{prop:subasympt}). If the class is supercritical, then $|\aff{\CC}_n|$ is asymptotically a constant times $|\CC_n|$; intriguingly, if $L$ is the growth rate of $\CC$, then $|\aff{\CC}_n|$ is asymptotically $L^n$, with no constant multiplier required (Theorem \ref{prop:superasympt}).

In a companion paper \cite{MT2}, we focus on the case of
avoiding monotone decreasing patterns $\bf{m(m-1)\cdots 321}$
in bounded affine permutations.  More specifically, for 
every $m\geq 3$ we show that there is an explicit constant 
$K_m$ such that 
\[    |\E_n({\bf m(m-1)\cdots 321})|  \;\sim\;
    K_m n^{(m-2)/2}(m-1)^{2n}   \hspace{5mm}\hbox{as }
    n\rightarrow\infty.
\]
We also obtain scaling limits as $n\to\infty$ of random 
elements of $\E_n({\bf m(m-1)\cdots 321})$, in the framework
of convergence of random measures that has been used in the context of permutons \cite{Glebov}.

We conclude the present paper by returning to the set of all 
bounded affine permutations of size $n$. We count them exactly in Section \ref{sec-first} 
and asymptotically in Section \ref{sec-asymp}, resulting in 
Theorems \ref{thm:exacttotal} and \ref{thm:Enasym} which respectively say
\begin{equation}
    \label{eq.Enexact}
    |\E_n| \;=\; \sum_{m=0}^n \binom{n}{m} \sum_{k=0}^m \binom{m}{n-k} (-1)^{n-m} a(m,k)
\end{equation}
where $a(m,k)$ are the Eulerian numbers, and 
\begin{equation}
    \label{eq.Enasym}
  |\E_n| \;\sim \; 
  \sqrt{\frac{3}{2\pi e n}}\, 2^n \,n! \hspace{5mm}
    \hbox{ as }n\rightarrow\infty.
\end{equation}.

\subsection{Motivation}
   \label{sec-motiv}

The plots of large randomly generated pattern-avoiding permutations offer 
visual representations of the 
structure of these permutations.  A particularly tantalizing case is that of  \textbf{4231}-avoidance, where 
a canoe-like shape appears (see Figure \ref{fig:4231}).  
\begin{figure}
\[
    \includegraphics[scale=0.7]{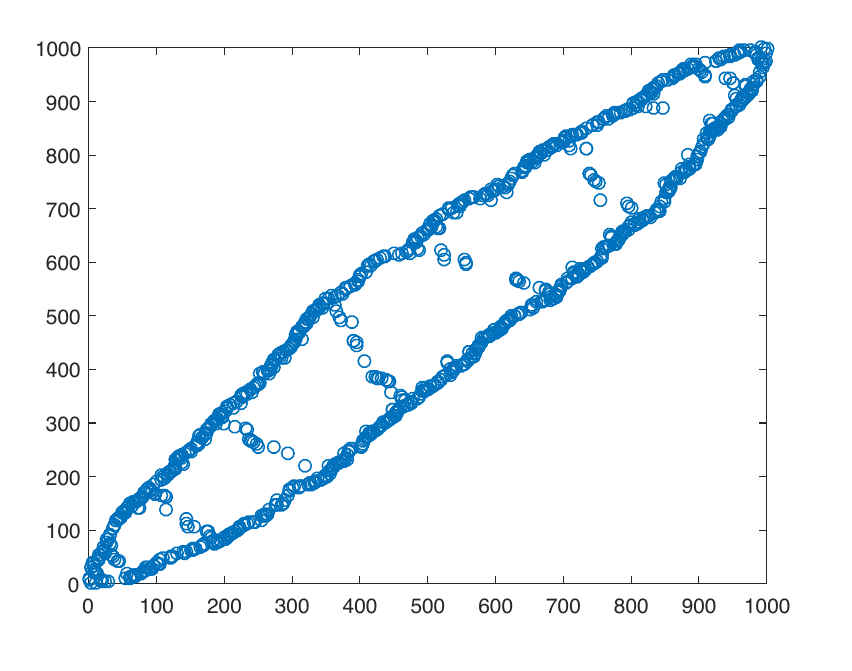}\]
    \caption{A random 4231-avoiding permutation
    of size 1000.  This was generated by Yosef Bisk and
    the first author using
    a Markov chain Monte Carlo algorithm.}
    \label{fig:4231}
\end{figure}
Visually, the middle part of the canoe 
looks roughly like two parallel lines that are joined by more tenuous perpendicular segments (as a canoe's gunwales are joined by thwarts).   
The parallelism of the ``gunwales'' is distorted 
near the canoe's ends, since the plot must fit into a square.   
Following a suggestion of Nathan Clisby that 
uses intuition borrowed from statistical physics, 
perhaps the ends of our \textbf{4231}-avoiding permutation are a ``boundary effect''
that we can try to separate from the main part of the permutation, far from the ends.
Is there a way to make rigorous sense of this idea, particularly at the level of asymptotics when the size
tends to infinity?   Specifically, 
Clisby asked if we could find a way to use \emph{periodic boundary conditions} for 
large pattern-avoiding permutations.

A classical statistical physics model is the Ising model for magnetism.  (See \cite{Thomp} for a nice introduction for 
mathematicians.)
Think of an atom at each point of a regular lattice, which has a ``spin'' that 
is either ``up'' or ``down.''  
For each $n\in \mathbb{N}$, let $\Lambda_n$ be the set of points of the
lattice $\mathbb{Z}^2$ in the square $[1,n]^2$.  
For a given temperature, there is a particular probability distribution on 
the spins of all the atoms  $\Lambda_n$, in which spins at neighbouring sites
interact; if the number of up spins differs significantly from the number down spins, 
then magnetization occurs.  This kind of model typically needs to be studied 
numerically in order to understand the physically relevant case of large $n$.  
But in a large finite square, the behaviour near the center can be rather different
from the behaviour near the boundary, where atoms have fewer neighbours with which to 
interact.  One way to study the behaviour near the center is to impose periodic boundary
conditions on the model, i.e.\ the sites on the right edge of the square are treated as
neighbours of sites on the left edge, and similarly for the top and bottom edges.  
We can think of this as repeating a single configuration of ups and downs periodically 
over $\mathbb{Z}^2$:  the spin at sites $(i,j)$ equals the spin at $(i+an,j+bn)$ for all
integers $a$ and $b$.  With periodic boundary conditions, the behaviour at any site is probabilistically 
the same as at every other site, 
and presumably is similar to behaviour in the middle of a large square without
periodic boundary conditions.

A different statistical physics model is the self-avoiding walk (SAW) \cite{deG,MS}, 
which models the configurations of long polymer molecules.
A SAW on a lattice such as $\mathbb{Z}^d$ is a finite path that 
visits no site more than once.  A randomly chosen long SAW
is believed to look like a fractal; however the behaviour near the ends 
is likely rather different from the behaviour near the middle.  
Clisby \cite{Clis} defined an ``endless self-avoiding walk'' as a SAW that could be
extended infinitely by repeated end-to-end concatenation 
to itself without 
self-intersections.  Not all SAWs can be extended in this way, but for those that
can, we get a model in which all parts of the periodically extended walk
behave like all other parts --- free from any endpoint effects, and more 
representative of the middle of a long (ordinary) SAW.

We take an analogous approach to pattern-avoiding permutations, creating 
``endless pattern-avoiding permutations.''  Any given permutation $\pi$ of size $n$
can be concatenated to itself repeatedly as in Figure \ref{fig.affinepi}, 
corresponding to the periodicity equation 
$\pi(i+an)=\pi(i)+an$ for all integers $i$ and $a$.  
To get something really new, we should allow other permutations of $\mathbb{Z}$
that share this periodicity property---that is, the affine permutations.  
However, even with the periodicity fixed, the spatial variation in the
plots of all affine permutations of size $n$ is too great.
To restrict the class as much as possible,
we impose the boundedness condition $|\sigma(i)-i|<n$, which can be viewed as the 
tightest translation-invariant bound that includes $\oplus \pi$ for every $\pi\in S_n$.

\subsection{Definitions and notation}
\label{sec-definitions}

For sequences $\{a_n\}$ and $\{b_n\}$, we write 
$a_n\sim b_n$ to mean $\lim_{n\to\infty} a_n/b_n \,=\, 1$.

Affine permutations and bounded affine permutations were defined above. We let $S_n$ denote the set of permutations of size $n$, we let $\widetilde{S}_n$ denote the set of affine permutations of size $n$, and we let $\E_n$ denote the set of bounded affine permutations of size $n$. Furthermore, we set $S = \bigsqcup_{n\ge0} S_n$ and $\widetilde{S} = \bigsqcup_{n\ge1} \widetilde{S}_n$ and $\E = \bigsqcup_{n\ge1} \E_n$. For $n \in \mathbb{N}$, we write $[n] = \{1, \ldots, n\}$.

We begin by introducing concepts that are standard in permutation patterns research. The \emph{diagram} or
\emph{plot} of a permutation $\pi \in S_n$ is the set of points $\{(i, \pi(i))\,:\,i \in [n]\}$. Given permutations $\pi$ and $\tau$, we say that \emph{$\pi$ contains $\tau$ as a pattern}, or simply that \emph{$\pi$ contains $\tau$}, if the diagram of $\tau$ can be obtained by deleting zero or more points from the diagram of $\pi$ (and shrinking corresponding segments of the axes), i.e.\ if $\pi$ has a subsequence whose entries have the same relative order as the entries of $\sigma$. We may also say that two sequences with the same relative order are \emph{order-isomorphic}. We say \emph{$\pi$ avoids $\tau$} if $\pi$ does not contain $\tau$. For instance, for $\pi = 493125876$, the subsequence $9356$ is an occurrence of $\tau = 4123$, but on the other hand $\pi$ avoids $3142$. See Figure \ref{fig:contain}.

\begin{figure}
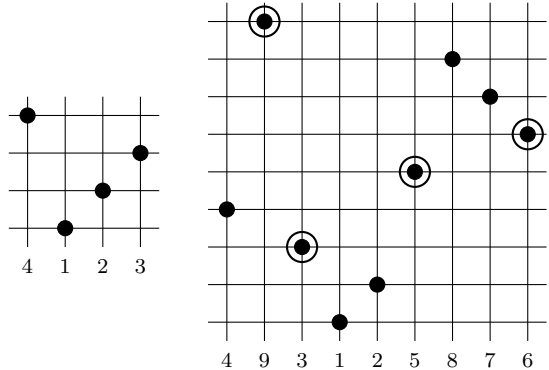

\[ \drawpermutation{4,1,2,3}{4} \hspace{0.25in}
        \drawpattern{4,9,3,1,2,5,8,7,6}{9}{9,3,5,6} \]
    \caption{The permutation $4123$ is contained in the permutation $493125876$.}
    \label{fig:contain}
\end{figure}

A \emph{permutation class} is a set $\CC$ of permutations such that, if $\pi \in \CC$ and $\tau$ is contained in $\pi$, then $\tau \in \CC$. For a permutation class $\CC$, we write $\CC_n = \CC \cap S_n$. If $R$ is a set of permutations, then $\Av{(R)}$ denotes the set of all permutations that avoid every element of $R$. Then $\Av{(R)}$ is a permutation class, and in fact every permutation class is equal to $\Av(R)$ for some set $R$.
%and for every permutation class $\CC$ there is a unique set $R$ such that $\CC = \Av{(R)}$ and no element of $R$ contains another. This $R$ is called the \emph{basis} of $\CC$.

Given $\sigma \in S_a$ and $\tau \in S_b$, the \emph{sum} $\sigma \oplus \tau$ is the permutation in $S_{a+b}$ obtained by juxtaposing the diagrams of $\sigma$ and $\tau$ diagonally: that is, $(\sigma \oplus \tau)(i) = \sigma(i)$ if $1 \le i \le a$, and $(\sigma \oplus \tau)(i) = a+\tau(i-a)$ if $a+1 \le i \le a+b$. (This explains the notation $\oplus \sigma$ defined above, which is the doubly infinite sum of $\sigma$ with itself.) A class $\CC$ is \emph{sum closed} if $\sigma, \tau \in \CC$ implies $\sigma \oplus \tau \in \CC$. A permutation is \emph{sum-indecomposable}, or \emph{indecomposable}, if it is not the sum of two permutations of non-zero size. The set of indecomposable permutations in a class $\CC$ is denoted $\ind{\CC}$.

The \emph{upper growth rate} of a permutation class $\CC$ is defined as $\upgr{\CC}:= \limsup_{n\to\infty} |\CC_n|^{1/n}$, and the \emph{lower growth rate} is defined as 
$\logr{\CC}:=\liminf_{n\to\infty} |\CC_n|^{1/n}$. If the upper and lower growth rates of $\CC$ are equal, \textit{i.e.}\ if $\lim_{n\to\infty} |\CC_n|^{1/n}$ exists (or is $\infty$), then this number is called the \emph{proper growth rate} of $\CC$, written $\gr{\CC}$. 
By the Marcus--Tardos Theorem (formerly the Stanley--Wilf Conjecture), every permutation class has a finite upper growth rate except the class of all permutations \cite{MarTar}. It is also known that every sum closed class has a proper growth rate (essentially due to Arratia \cite{Arr}). %Thus, when $\CC$ is assumed to be sum closed, we can write $\gr{\CC}$ for its proper growth rate. 
In particular, for each $\tau\in S$, since either $\tau$ is sum-indecomposable or its reverse is, the class $\Av(\tau)$
has a proper growth rate, often called the Stanley--Wilf limit and denoted $L(\tau)$.
It is widely believed that every permutation class has a proper growth rate, but we will refer to the upper or lower growth rate unless we know for sure.

We now introduce the analogous concepts for affine permutations. The \emph{diagram} or \emph{plot} of an affine permutation $\omega \in \widetilde{S}_n$ is the set of points $\{(i, \omega(i)) \,:\,i \in \Z\}$. Given an affine permutation $\omega$ and an ordinary permutation $\tau$, we say that \emph{$\omega$ contains $\tau$ as a pattern}, or simply that \emph{$\omega$ contains $\tau$}, if the diagram of $\tau$ can be obtained by deleting some points from the diagram of $\omega$, i.e.\ if $\omega$ has a subsequence whose entries have the same relative order as the entries of $\tau$. We say \emph{$\pi$ avoids $\tau$} if $\pi$ does not contain $\tau$.

The idea of an affine permutation containing or avoiding a given ordinary permutation was first used by Crites \cite{Crites}, but we can also define one affine permutation containing or avoiding another affine permutation --- this will be important in Section \ref{sec:recognizing}, along with the notion of an affine permutation class (see below). Given $\omega, \omega' \in \widetilde{S}$, we say $\omega$ \emph{contains} $\omega'$ if the diagram of $\omega'$ can be obtained by deleting entries from the diagram of $\omega$ (and stretching the axes appropriately). In case the infinite nature of the diagrams makes this intuitive definition inadequate, we provide a more careful definition: $\omega$ \emph{contains} $\omega'$ if there are two injective order-preserving functions $\phi, \phi' \colon \Z \to \Z$ such that $\omega(\phi(i)) = \phi'(\omega'(i))$ for all $i \in \Z$. We say $\omega$ \emph{avoids} $\omega'$ if $\omega$ does not contain $\omega'$.

If $\omega$ contains $\omega'$, then it is always possible to find a periodic occurrence of $\omega'$ in $\omega$, i.e.\ the functions $\phi, \phi'$ above can be chosen such that, for some $m$, $\phi(i+m) = \phi(i)+m$ and $\phi'(i+m) = \phi'(i)+m$ for all $i$. This fact is tedious to prove and unimportant to this paper, so we omit the proof. Given $\omega \in \widetilde{S}$ and $\tau \in S$, $\omega$ contains the ordinary permutation $\tau$ if and only if $\omega$ contains the affine permutation $\aff{\tau}$.

An \emph{affine permutation class} is a set $\CC$ of affine permutations such that, if $\omega \in \CC$ and $\omega'$ is an affine permutation contained in $\omega$, then $\omega' \in \CC$. For an affine permutation class $\CC$, we write $\CC_n = \CC \cap \widetilde{S}_n$. If $R \subseteq S \cup \widetilde{S}$, then $\AvA{(R)}$ denotes the set of all affine permutations that avoid every element of $R$. Then $\AvA{(R)}$ is an affine permutation class, and every affine permutation class $\CC$ is equal to $\AvA{(R)}$ for some $R$ (for instance we can take $R = \widetilde{S} \smallsetminus \CC$).

Similarly, a \emph{bounded affine permutation class} is a set $\CC$ of bounded affine permutations such that, if $\omega \in \CC$ and $\omega'$ is a bounded affine permutation contained in $\omega$, then $\omega' \in \CC$. The definitions and observations from the previous paragraph apply to bounded affine permutation classes as well. We can define $\upgr{\CC}$, $\logr{\CC}$, and $\gr{\CC}$ for affine permutation classes and bounded affine permutation classes in the same way as for ordinary permutation classes, though we do not know whether $\gr{\E(\tau)}$ exists for every ordinary permutation $\tau$, as it does in the setting of ordinary permutation classes.

We encounter a problem with enumeration: many affine permutation classes have infinitely many elements of each size. On this, there is a result by Crites \cite{Crites} that $\AvA(\tau)$ satisfies $\AvA_n(\tau) < \infty$ for all $n$ if and only if $\tau$ avoids $321$ --- or, more generally, an affine permutation class $\CC$ satisfies $|\CC_n| < \infty$ for all $n$ if and only if $\AvA(321)$ is not a subset of $\CC$. (Our Theorem \ref{thm:oscillation} is a result with a similar flavor.) Of course, a bounded affine permutation class has only finitely many elements of each size.

We will occasionally write an affine permutation $\omega$ in \emph{two-line notation}:
\[ \omega = \begin{pmatrix} \cdots & 1 & 2 & \cdots & n & \cdots \\ \cdots & \omega(1) & \omega(2) & \cdots & \omega(n) & \cdots \end{pmatrix}. \]

\section{Decomposable affine permutation classes} \label{sec:exact}

For $\sigma \in \widetilde{S}_n$ and $r \in \Z$, define $\Sigma^r \sigma \colon \Z \to \Z$ by:
\[ \Sigma^r \sigma(i) = \sigma(i-r) + r. \]
Observe that $\Sigma^r \sigma \in \widetilde{S}_n$. We call $\Sigma^r \sigma$ a \emph{shift} of $\sigma$ (by $r$).

Given a permutation class $\CC$, let $\aff{\CC}$ denote the following set of affine permutations:
\[ \aff{\CC} = \{ \Sigma^r(\oplus \pi) \colon \text{$\pi \in \CC$ and $r \in \Z$}\}. \]
That is, $\aff{\CC}$ is the set of all shifts of infinite sums of permutations in $\CC$. An affine permutation that is a shift of an infinite sum of a permutation shall be called a \emph{decomposable} affine permutation.

Let $\CC$ be a sum closed permutation class. If a permutation $\pi$ is contained as a pattern in some $\sigma \in \aff{\CC}$, then $\pi \in \CC$. As a result, if the permutations in $\CC$ all avoid a given $\pi \in S_n$, then the affine permutations in $\aff{\CC}$ all avoid $\pi$ as well. Furthermore, if an affine permutation $\tau$ is contained as a pattern in some $\sigma \in \aff{\CC}$, then $\tau \in \aff{\CC}$. These facts make it natural to study $\aff{\CC}$ for sum closed $\CC$. In this section and in Sections \ref{sec:schema} and \ref{sec:asympt}, we explore the relationship between $|\CC_n|$ and $|\aff{\CC}_n|$ for sum closed $\CC$. We prove both exact and asymptotic results.

Let $f_n = |\CC_n|$ and $\widetilde{f}_n = |\aff{\CC}_n|$ and $g_n = |\ind{\CC}_n|$. We use the convention that $f_0 = 1$ and $\widetilde{f}_0 = 0$ and $g_0 = 0$. Define
\[ F(x) = \sum_{n\ge0} f_n x^n \quad \text{and} \quad \widetilde{F}(x) = \sum_{n\ge1} \widetilde{f}_n x^n \quad \text{and} \quad G(x) = \sum_{n\ge1} g_n x^n. \]
Thus $F(x)$, $\widetilde{F}(x)$, and $G(x)$ are the ordinary generating functions for, respectively, $\CC$, $\aff{\CC}$, and $\ind{\CC}$.

Since $\CC$ is sum closed, we have the relation
\begin{equation} \label{eqn:seqrelation} F(x) = \frac{1}{1-G(x)} \end{equation}
because each permutation in $\CC$ has a unique decomposition as a sequence of indecomposable permutations in $\CC$.

For each $\pi \in S_n$, let $\first(\pi)$ denote the size of the first indecomposable block of $\pi$, and let $\chi(\pi)$ denote the number of indecomposable blocks of $\pi$. For example, $\first(4312576) = 4$ because the first block of $4312576$ is $4312$, and $\chi(4312576) = 3$ because $4312576$ decomposes into blocks as $4312 \oplus 1 \oplus 21$.

\begin{lem} \label{lem:pair} For $n \ge 1$, $\widetilde{f}_n = |\aff{\CC}_n| = \sum_{\pi \in \CC_n} \first(\pi)$. \end{lem}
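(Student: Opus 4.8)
The plan is to set up a bijection between $\aff{\CC}_n$ and pairs consisting of a permutation $\pi \in \CC_n$ together with a choice of one of its $\first(\pi)$ admissible ``cut points.'' Recall that an element of $\aff{\CC}_n$ has the form $\Sigma^r(\oplus\pi)$ for some $\pi \in \CC_n$ and some $r \in \Z$; the issue is that many pairs $(\pi, r)$ give the same affine permutation, so I need to identify exactly when $\Sigma^r(\oplus\pi) = \Sigma^{r'}(\oplus\pi')$. First I would observe that since $\CC$ is sum closed, every $\sigma \in \aff{\CC}_n$ really does arise this way, and then analyze the redundancy. Changing $r$ by $n$ is absorbed by the periodicity of $\oplus\pi$, so $r$ only matters modulo $n$; that accounts for a factor of $n$, but not all of these $n$ shifts need stay within a fixed sum-decomposition pattern, and conversely a single affine permutation can be written using different underlying finite permutations $\pi$ (namely the cyclic rotations of the block structure of $\pi$).

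The key step is to pin down the canonical data. Given $\sigma \in \aff{\CC}_n$, consider its restriction to a window of length $n$, say $(\sigma(i+1), \ldots, \sigma(i+n))$ for varying $i$; this is order-isomorphic to some permutation in $\CC_n$, and as $i$ ranges over a period we get exactly the cyclic class of block-rotations of a fixed $\pi$. I would argue that for each $\sigma$ there is a unique way to write $\sigma = \Sigma^r(\oplus\pi)$ subject to the normalization that the window starts at a block boundary of $\pi$ and, say, $0 \le r < \first(\pi)$ — i.e., the diagonal shift does not move the first block past an integer boundary. Concretely: the pairs $(\pi, r)$ with $\pi \in \CC_n$ and $0 \le r < \first(\pi)$ are in bijection with $\aff{\CC}_n$, because starting from any $(\pi', r')$ one can cyclically rotate the blocks of $\pi'$ and adjust $r'$ to reach this normal form uniquely. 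Summing $1$ over all such normalized pairs gives $\sum_{\pi \in \CC_n} \first(\pi)$, as claimed.

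I expect the main obstacle to be the bookkeeping of the block-rotation action: showing that the orbit of $(\pi, r)$ under ``rotate the first block of $\pi$ to the end and decrease $r$ accordingly'' has a well-defined size and that exactly one representative satisfies $0 \le r < \first(\pi)$. One must check that rotating blocks does not leave $\CC$ (true since $\CC$ is sum closed and the multiset of blocks is unchanged) and that the shift parameter interacts correctly — rotating the first block of size $\first(\pi)$ to the back shifts the natural window by $\first(\pi)$ positions horizontally and $\first(\pi)$ units vertically, so it changes $r$ by exactly $\first(\pi)$, making the normalization $0 \le r < \first(\pi)$ select a unique orbit representative for each $\sigma$. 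Once this equivariance is verified, the count is immediate. An alternative, cleaner route is purely enumerative via generating functions: if one already trusts Equation~\eqref{eq:exactclassintro}, then $\sum_n \widetilde f_n x^n = x\frac{d}{dx}\log F(x) = xF'(x)/F(x) = xF'(x)(1-G(x))$, and extracting coefficients and matching against $\sum_{\pi\in\CC_n}\first(\pi)$ (which has its own generating function in terms of $G$ via the first-block decomposition $F(x) = 1 + xG'(x)\cdot\frac{?}{?}$ — more precisely the bivariate generating function tracking first-block size) recovers the identity; but since Lemma~\ref{lem:pair} is presumably used to \emph{prove} \eqref{eq:exactclassintro}, the bijective argument above is the honest one.
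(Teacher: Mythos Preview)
Your approach is essentially the paper's: both set up the bijection $(\pi,r)\mapsto \Sigma^{-r}(\oplus\pi)$ (the paper uses $r\in\{1,\ldots,a(\pi)\}$, you use $0\le r<a(\pi)$, a cosmetic difference) between normalized pairs and $\aff{\CC}_n$. The paper's proof is a two-sentence assertion of this bijection with no justification, whereas you supply the orbit analysis under block rotation that explains why exactly one representative lies in the chosen range; your hedging about the bookkeeping is warranted but the argument goes through.
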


\begin{proof}
The sum on the right side is equal to the number of pairs $(\pi, r)$ with $\pi \in \CC_n$ and $r \in \{1, \ldots, \first(\pi)\}$. The set of such pairs is mapped into $\oplus \CC_n$ by $(\pi, r) \mapsto \Sigma^{-r}(\oplus \pi)$, and this mapping is a bijection.
\end{proof}

\begin{prop} \label{prop:exact} Let $n \ge 1$. \begin{enumerate}[(a)]
\item $\widetilde{f}_n = \sum_{k=1}^n kg_kf_{n-k}$.
\item $\max\{ng_n, f_n\} \le \widetilde{f}_n \le nf_n$.
\end{enumerate} \end{prop}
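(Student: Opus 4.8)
For part (a), the plan is to start from Lemma~\ref{lem:pair}, which gives $\widetilde{f}_n = \sum_{\pi \in \CC_n} \first(\pi)$, and to sort the permutations $\pi \in \CC_n$ according to their first indecomposable block. Every $\pi \in \CC_n$ factors uniquely as $\beta \oplus \rho$ where $\beta$ is an indecomposable permutation in $\CC$ (necessarily of some size $k$ with $1 \le k \le n$) and $\rho \in \CC_{n-k}$; here I am using that $\CC$ is sum closed, so that $\beta, \rho \in \CC$ whenever $\beta \oplus \rho \in \CC$, and conversely. For such a $\pi$ we have $\first(\pi) = k$. Grouping the sum by the value of $k = \first(\pi)$ and then by the choice of $\beta \in \ind{\CC}_k$ (there are $g_k$ of these) and $\rho \in \CC_{n-k}$ (there are $f_{n-k}$ of these), each contributing $k$, gives $\widetilde{f}_n = \sum_{k=1}^n k \, g_k f_{n-k}$. (I should double-check the degenerate case $k=n$, where $\rho$ is the empty permutation and $f_0 = 1$ by convention — this is consistent.)

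For part (b), the lower bound $f_n \le \widetilde{f}_n$ is immediate from Lemma~\ref{lem:pair} since $\first(\pi) \ge 1$ for every nonempty $\pi$; equivalently, the map $\pi \mapsto \oplus\pi$ is an injection $\CC_n \hookrightarrow \aff{\CC}_n$. The upper bound $\widetilde{f}_n \le n f_n$ is equally immediate from Lemma~\ref{lem:pair} since $\first(\pi) \le n$ for every $\pi \in \CC_n$. The remaining inequality $n g_n \le \widetilde{f}_n$ is the one requiring a small observation: in the formula from part (a), isolate the $k = n$ term, which is $n \, g_n f_0 = n g_n$; since all other terms $k g_k f_{n-k}$ are nonnegative, $\widetilde{f}_n \ge n g_n$. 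Alternatively, and perhaps more transparently, one can see this directly from Lemma~\ref{lem:pair}: the indecomposable permutations in $\CC_n$ are exactly those $\pi \in \CC_n$ with $\first(\pi) = n$, so restricting the sum $\sum_{\pi \in \CC_n}\first(\pi)$ to those $g_n$ terms already contributes $n g_n$.

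I do not expect any genuine obstacle here; the proof is essentially bookkeeping built on Lemma~\ref{lem:pair} and the unique-block-decomposition identity~\eqref{eqn:seqrelation}. The one point that warrants care is making sure the factorization $\pi = \beta \oplus \rho$ in part (a) is set up so that $\beta$ ranges over \emph{indecomposable} elements of $\CC$ and $\rho$ over \emph{all} of $\CC$ (including the empty permutation), with no double-counting — this is exactly the statement that the generating-function identity $F = 1/(1-G)$ is proved by "first block, then the rest," and part (a) is just the weighted refinement of that identity obtained by marking the size of the first block. If one prefers, part (a) can even be derived purely at the level of generating functions: $\widetilde{F}(x) = \sum_n \widetilde f_n x^n$ equals $\bigl(\sum_k k g_k x^k\bigr)\bigl(\sum_m f_m x^m\bigr) = x G'(x) F(x)$, and reading off coefficients gives the convolution in (a); but the combinatorial argument above is cleaner and self-contained given Lemma~\ref{lem:pair}.
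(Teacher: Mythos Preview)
Your proof is correct and follows essentially the same approach as the paper: part~(a) is obtained from Lemma~\ref{lem:pair} by grouping according to the size of the first block, and part~(b) is read off from (a) by bounding each summand (or, equivalently as you note, directly from Lemma~\ref{lem:pair} via $1 \le \first(\pi) \le n$). The only cosmetic difference is that the paper derives all three inequalities in (b) from the formula in (a), whereas you derive two of them straight from Lemma~\ref{lem:pair}; these are the same argument.
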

\begin{proof} \let\qqed\qed \let\qed\negativespace % Fixes the qed for this proof
Using Lemma \ref{lem:pair},
\[ \widetilde{f}_n = \sum_{\pi \in \CC_n} a(\pi) = \sum_{k=1}^n \sum_{\substack{\pi \in \CC_n \\ a(\pi) = k}} k = \sum_{k=1}^n k \cdot \#\{\pi \in \CC_n \colon a(\pi) = k\},  \]
and the number of $\pi \in \CC_n$ whose first block has size $k$ is given by $g_k f_{n-k}$, because there are $g_k$ options for the first block and $f_{n-k}$ options for the rest of $\pi$. This proves (a).

We can now use (a) to prove (b):
\begin{align*}
\sum_{k=1}^n kg_kf_{n-k} &\le \sum_{k=1}^n ng_kf_{n-k} = n\sum_{k=1}^n g_kf_{n-k} = nf_n, \\
\text{and}\quad \sum_{k=1}^n kg_kf_{n-k} &\ge \max\{ng_n, f_n\}. \tag*{\qqed} \end{align*}
\end{proof}

\begin{rem} Proposition \ref{prop:exact}(b) implies that $\underline{\textup{gr}}(\aff{\CC}) = \underline{\textup{gr}}(\CC)$ and $\overline{\textup{gr}}(\aff{\CC}) = \overline{\textup{gr}}(\CC)$. Furthermore, the fact that $\widetilde{f}_n \le nf_n$ stands in contrast to the situation of bounded affine permutations: the number of bounded affine permutations of size $n$ avoiding $321$ is asymptotically $\sfrac{1}{2} n^2\left|\Av_n(321)\right|$. \end{rem}

\begin{thm} \label{thm:exactclass} $\widetilde{F}(x) = x \,G'(x)\,F(x) = \frac{x F'(x)}{F(x)} = x \frac{d}{dx} \log(F(x))$.
\end{thm}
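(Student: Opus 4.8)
The plan is to translate the coefficient identity of Proposition~\ref{prop:exact}(a) into a generating-function identity and then simplify it using the sequence relation~\eqref{eqn:seqrelation}. First I would recognize that the formula $\widetilde{f}_n = \sum_{k=1}^n k g_k f_{n-k}$ is a Cauchy product. Since $G(x) = \sum_{k\ge1} g_k x^k$, differentiating gives $x G'(x) = \sum_{k\ge1} k g_k x^k$, so the right-hand side of the formula is precisely the coefficient of $x^n$ in $\bigl(x G'(x)\bigr) F(x)$. Checking the $n = 0$ term is immediate: $x G'(x)$ has no constant term, hence neither does $x G'(x) F(x)$, matching the convention $\widetilde{f}_0 = 0$. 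Therefore $\widetilde{F}(x) = x G'(x) F(x)$.

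Next I would invoke~\eqref{eqn:seqrelation}, i.e.\ $F = 1/(1-G)$, equivalently $G = 1 - 1/F$. Differentiating yields $G' = F'/F^2$, and substituting into $x G'(x) F(x)$ gives $x\,\frac{F'(x)}{F(x)^2}\,F(x) = \frac{x F'(x)}{F(x)}$, which equals $x\,\frac{d}{dx}\log F(x)$ by the chain rule. This establishes the full chain of equalities in the statement.

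There is no genuine obstacle here: once Proposition~\ref{prop:exact}(a) and~\eqref{eqn:seqrelation} are in hand, the argument is a short formal-power-series manipulation. The only points requiring a little care are the bookkeeping of constant terms and the observation that $F(x)$ has constant term $f_0 = 1$, so that $1/F(x)$ and $\log F(x)$ are well-defined formal power series and the computations above are legitimate.
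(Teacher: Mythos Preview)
Your proposal is correct and follows essentially the same route as the paper: derive $\widetilde{F}(x) = x\,G'(x)\,F(x)$ from Proposition~\ref{prop:exact}(a) via the Cauchy product, then use the relation $G = 1 - 1/F$ to rewrite this as $xF'(x)/F(x) = x\,\frac{d}{dx}\log F(x)$. Your additional remarks on constant terms and the well-definedness of $1/F$ and $\log F$ are welcome bits of care that the paper leaves implicit.
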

\begin{proof} The first equation, $\widetilde{F}(x) = x \,G'(x)\,F(x)$, follows immediately from Proposition \ref{prop:exact}(a). Next, by \eqref{eqn:seqrelation}, we have $G(x) = 1 - \frac{1}{F(x)}$, so $G'(x) = \frac{F'(x)}{F(x)^2}$, and from this we easily obtain $x\,G'(x)\,F(x) = \frac{x F'(x)}{F(x)}$. The last equation, $\frac{x F'(x)}{F(x)} = x \frac{d}{dx}\log(F(x))$, is merely an identity of formal power series.
\end{proof}

\begin{expl} \label{ex:exact}
\begin{itemize}
\item If $\CC$ is one of $\Av(321)$, $\Av(312)$, or $\Av(231)$, then $F(x) = \frac{1-\sqrt{1-4x}}{2x}$ (this class is famously counted by the Catalan numbers). Applying Theorem \ref{thm:exactclass} gives us $\widetilde{F}(x) = \frac{1}{2\sqrt{1-4x}} - \frac{1}{2}$, so $\widetilde{f}_n = \frac{1}{2} \binom{2n}{n}$.
\item Let $\CC = \Av(312,231)$, the class of layered permutations, so called because each indecomposable block is a decreasing sequence. We have $F(x) = \frac{1-x}{1-2x}$ (the class is in bijection with compositions). Theorem \ref{thm:exactclass} yields $\widetilde{F}(x) = \frac{x}{(1-x)(1-2x)}$, so $\widetilde{f}_n = 2^n - 1$.
\item If $\CC = \Av(3142)$, then $F(x) = \frac{32x}{1+20x-8x^2-(1-8x)^{3/2}}$. Applying Theorem \ref{thm:exactclass} gives us $\widetilde{F}(x) = \frac{1-2x-\sqrt{1-8x}}{2(1+x)}$.
\item If $\CC = \Av(3142, 2413)$, the class of separable permutations, then $F(x) = \sfrac{3-x-\sqrt{1-6x+x^2}}{2}$ (this class is counted by the Schr\"oder numbers). Applying Theorem \ref{thm:exactclass}, we obtain $\widetilde{F}(x) = \frac{x}{\sqrt{1-6x+x^2}}$, so $\widetilde{f}_n$ is the $(n-1)$st central Delannoy number.
\end{itemize}
\end{expl}

We conclude this section with a result on the number of all decomposable affine permutations of size $n$.

\begin{prop} Let $\CC$ be the class of all permutations, so $\widetilde{f}_n = |\aff{S}_n|$ (the number of decomposable affine permutations of size $n$) and $g_n = |\ind{S}_n|$ (the number of indecomposable (ordinary) permutations of size $n$). For all $n \ge 1$, we have $\widetilde{f}_n = g_{n+1}$.
\end{prop}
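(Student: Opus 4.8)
The plan is to read the statement off Theorem~\ref{thm:exactclass} and the two recurrences already established, using only that $\CC = S$ forces $f_m = m!$ for every $m \ge 0$.

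I would start from the sequence relation~\eqref{eqn:seqrelation}, rewritten as $F = 1 + GF$, i.e.\ as the recurrence $f_m = \sum_{k=1}^m g_k\, f_{m-k}$ for $m \ge 1$ (valid with the conventions $f_0 = 1$, $g_0 = 0$). Applying it with $m = n+1$ and peeling off the $k = n+1$ term isolates $g_{n+1}$:
\[ g_{n+1} \;=\; f_{n+1} \;-\; \sum_{k=1}^{n} g_k\, f_{n+1-k}. \]
The key move is then to invoke $f_j = j!$ to write $f_{n+1-k} = (n+1-k)\,f_{n-k}$ for $1 \le k \le n$, so that the sum on the right splits as $(n+1)\sum_{k=1}^n g_k f_{n-k} - \sum_{k=1}^n k\,g_k f_{n-k}$. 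The first of these sums is $(n+1)f_n$ by the recurrence at $m = n$, and the second is exactly $\widetilde{f}_n$ by Proposition~\ref{prop:exact}(a). Since $f_{n+1} = (n+1)! = (n+1)f_n$ as well, the two multiples of $f_n$ cancel and I am left with $g_{n+1} = \widetilde{f}_n$.

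Alternatively, I could run the same argument through generating functions: the relation $f_m = m f_{m-1}$ is equivalent to the differential equation $x^2 F'(x) = (1-x)F(x) - 1$, whence Theorem~\ref{thm:exactclass} gives $\widetilde{F}(x) = x F'(x)/F(x) = \tfrac{1-x}{x} - \tfrac{1}{x F(x)}$; substituting $1/F(x) = 1 - G(x)$ from~\eqref{eqn:seqrelation} collapses this to $\widetilde{F}(x) = \bigl(G(x) - x\bigr)/x$, and since $g_1 = 1$ this series equals $\sum_{n \ge 1} g_{n+1}x^n$, so matching coefficients with $\widetilde F(x) = \sum_{n\ge1}\widetilde f_n x^n$ gives the result.

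I do not anticipate a genuine obstacle: each step is a one-line manipulation of quantities introduced earlier, and the only place to be careful is the index bookkeeping (separating the $k = n+1$ term, and keeping $n-k \ge 0$ when writing $f_{n+1-k} = (n+1-k)f_{n-k}$). The more interesting point, which I would flag but not settle, is whether there is a bijective proof — a direct bijection from decomposable affine permutations of size $n$ to indecomposable permutations of size $n+1$, realizing the count $\sum_{\pi \in S_n}\first(\pi)$ of Lemma~\ref{lem:pair} — presumably along the lines of the Cycle Lemma circle of ideas mentioned in the introduction.
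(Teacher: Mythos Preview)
Your proof is correct, and both the recurrence manipulation and the generating-function argument go through cleanly. The one subtlety you flag (index bookkeeping when $k=n$) is handled, since $f_{n+1-k} = (n+1-k)f_{n-k}$ reads $f_1 = 1\cdot f_0$ there.

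However, your approach is genuinely different from the paper's. The paper gives precisely the bijective proof you speculate about at the end: every $\pi' \in S_{n+1}$ is obtained uniquely by inserting the value $n{+}1$ into some position of some $\pi \in S_n$, and $\pi'$ is indecomposable exactly when that position lies in the first block of $\pi$, i.e.\ in $\{1,\ldots,a(\pi)\}$. Hence $g_{n+1} = \sum_{\pi\in S_n} a(\pi)$, which is $\widetilde{f}_n$ by Lemma~\ref{lem:pair}. So the paper's argument is a one-line combinatorial bijection realizing the count $\sum_{\pi} a(\pi)$, whereas yours is a purely algebraic derivation from the identities already established (Proposition~\ref{prop:exact}(a), \eqref{eqn:seqrelation}, and Theorem~\ref{thm:exactclass}). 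Your route has the advantage of showing that the result is forced by those earlier formulas together with the single fact $f_m = m!$; the paper's route explains \emph{why} the identity holds and answers your closing question about a direct bijection.
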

\begin{proof}
Recall that every permutation in $S_{n+1}$ is obtained in a unique way by taking a permutation $\pi \in S_n$ and inserting the value $n+1$ in some position from $1$ to $n+1$. The permutation in $S_{n+1}$ is indecomposable if and only if the value $n+1$ is inserted in the first block of $\pi$, i.e.\ it is inserted in a position from $1$ to $a(\pi)$. Thus each $\pi \in S_n$ gives rise to $a(\pi)$ permutations in $\ind{S}_{n+1}$. Therefore, $g_{n+1} = \sum_{\pi \in S_n} a(\pi)$, which equals $\widetilde{f}_n$ by Lemma \ref{lem:pair}.
\end{proof}

\section{Supercritical and subcritical sequence schemas} \label{sec:schema}

A \emph{sequence schema} is a pair of ordinary generating functions $F(x)$ and $G(x)$ where $G(x)$ represents ``connected'' or ``indecomposable'' structures and $F(x)$ represents sequences of the connected structures. This is expressed by the relation
\[ F(x) = \frac{1}{1-G(x)}. \]
The constant term of $G(x)$ is $0$, and the constant term of $F(x)$ is $1$.

Let $r$ denote the radius of convergence of $G(x)$, and let $\tau = \lim_{x\to r^-} G(x)$.
\begin{itemize}
\item The sequence schema is \emph{subcritical} if $\tau < 1$.
\item The sequence schema is \emph{critical} if $\tau = 1$.
\item The sequence schema is \emph{supercritical} if $\tau > 1$.
\end{itemize}
The schema exhibits qualitatively different behavior depending on whether it is subcritical, critical, or supercritical. The reason it matters whether $\tau$ is greater than $1$ is that it affects the nature and location of the singularities of $F(x)$ (considered as a complex function). If $\tau > 1$ (the supercritical case), then by the Intermediate Value Theorem there is $\xi \in (0,r)$ such that $G(\xi) = 1$, which makes $\xi$ a singularity of $F(x) = \frac{1}{1-G(x)}$. Conversely, if $\tau > 1$ (the subcritical case), then this phenomenon does not occur, so there is no singularity on the interior of the disk of convergence --- the pertinent singularities of $F(x)$ are just the ones inherited from $G(x)$. The case of $\tau = 1$ (the critical case) is less common, and its analysis is more complicated, so we will mostly ignore it --- but see Example \ref{expl:critical} below.

If $\CC$ is a sum closed permutation class, with $F(x)$ and $G(x)$ the respective generating functions of $\CC$ and $\ind{\CC}$, then $F(x)$ and $G(x)$ are a sequence schema. Throughout this section and the next, we consider several examples of specific classes $\CC$.

\begin{expl}
   \label{ex.C}
\begin{itemize}
\item Let $\CC$ be one of $\Av(321)$, $\Av(312)$, or $\Av(231)$. We have $F(x) = \frac{1-\sqrt{1-4x}}{2x}$ and $G(x) = \frac{1-\sqrt{1-4x}}{2} = x\,F(x)$. We have $r = 1/4$, so
\[ \tau = \lim_{x\to1/4^-} \frac{1-\sqrt{1-4x}}{2} = \frac{1}{2}, \]
and so $\CC$ is subcritical.
\item Let $\CC = \Av(312,231)$, the class of layered permutations. We have $G(x) = \frac{x}{1-x}$ and $F(x) = \frac{1-x}{1-2x}$. Since $r = 1$, we get
\[ \tau = \lim_{x\to1^-} \frac{1}{1-x} = \infty, \]
and so this class is supercritical.
\item More subcritical classes: $\Av(3142)$, $\Av(3142, 2413)$ (i.e.\ the separable permutations), and any sum closed classes that are equinumerous to either of these. Another supercritical class: $\Av(321, 312, 231)$ (this is the class of permutations whose blocks are $1$ or $21$, and it is counted by the Fibonacci numbers).
\end{itemize}
\end{expl}
\begin{expl}
If $F(x)$ is rational (which is of course equivalent to $G(x)$ being rational), then the schema is supercritical. This is because the singularities of a rational function must be poles, and the limit at a pole is $\infty$. This covers both of the supercritical permutation classes given in the previous example.
\end{expl}

\begin{expl} \label{expl:critical}
We now construct a sum closed permutation class $\CC$ that has a critical sequence schema. Our method is to start with the indecomposable permutations in $\Av(321)$ (which is subcritical) and add more indecomposable permutations until the resulting schema is critical. For each $n$, define a set $A_n \subseteq \ind{S_n}$ as follows:
\begin{itemize}
\item If $1 \le n \le 7$, then let $A_n = \ind{S_n}$.
\item Let $A_8$ be any set of $6513$ permutations satisfying $\ind{\Av_8(321)} \subseteq A_8 \subseteq \ind{S_8}$. Such a set $A_8$ exists, since $|\ind{\Av_8(321)}| = 429$ and $|\ind{S_8}| = 29\,093$.
\item If $n \ge 9$, then let $A_n = \ind{\Av_n(321)}$.
\end{itemize}
Finally, define $\CC$ to be the set of permutations of the form $\pi_1 \oplus \cdots \oplus \pi_k$ for $\pi_1, \ldots, \pi_k \in \bigsqcup_{n\ge1} A_n$. This set $\CC$ is a permutation class because the sets $A_n$ satisfy the following property: if $\rho \in \ind{S_i}$ and $\pi \in A_j$ and $\rho$ is contained as a pattern in $\pi$, then $\rho \in A_i$. Clearly this class satisfies $\ind{\CC_n} = A_n$. Therefore,
\begin{align*}
G(x) &= \frac{1-\sqrt{1-4x}}{2} + \left[\sum_{i=1}^7 \left(|\ind{S_i}| - |\ind{\Av_i(321)}|\right)x^i\right] + \left(6513 - |\ind{\Av_8(321)}|\right)x^8 \\
&= \frac{1-\sqrt{1-4x}}{2} + x^3 + 8x^4 + 57x^5 + 419x^6 + 3315x^7 + 6084x^8.
\end{align*}
We have $\tau = G(1/4) = 1$ (this is the reason for our precise choice of $|A_8| = 6513$), so $\CC$ is critical. We are not aware of any  critical classes that are less artificial.
\end{expl}

\begin{rem} B\'ona \cite{Bona} showed that the class $\Av(\sigma)$ is critical or subcritical for almost all $\sigma \in \ind{S_k}$ (in the sense that the fraction of $\sigma \in \ind{S_k}$ for which the property holds goes to $1$ as $k \to \infty$). It is not hard to strengthen this result to show that the class $\Av(\sigma)$ is in fact subcritical for almost all $\sigma \in \ind{S_k}$. \end{rem}

Let $\chi_n$ be the number of blocks (or components) of a uniformly random size-$n$ structure counted by $F(x)$. We are interested in the asymptotic distribution of $\chi_n$ as $n \to \infty$. 
% Without getting too deeply into the technical details, 
We now state the relevant results, from Flajolet and Sedgewick \cite{FS}.  

\begin{prop}[{\cite[Prop.\ IX.2]{FS}}]
\label{prop:subcritical} Consider the subcritical case $\tau<1$.  Assume that when extended to a function of a complex variable $z$, $G$ satisfies the following technical 
conditions:
$G(z)$ has no singularity except $z=r$ on its disc of convergence, and we can express $G(z)=\tau-(c+o(1))(1-z/r)^{\lambda}$ with $c>0$ and 
$0<\lambda<1$ for $z$ in some set of the form $D\setminus T$ where $D=\{z\in\mathbb{C}:|z|<r+\epsilon\}$ ($\epsilon>0$)
and $T$ is a
closed triangle, symmetric about the real axis, with its left vertex at $z=r$.
%satisfies certain complex-analytic conditions[***BE MORE SPECIFIC****], 
Then, as $n\to\infty$, the mean of $\chi_n$ is asymptotically $\frac{1+\tau}{1-\tau}$, and the variance is asymptotically $\frac{2\tau}{(1-\tau)^2}$. Moreover, the distribution of $\chi_n$ converges to a negative binomial distribution (offset by $1$):
\[ \lim_{n\to\infty} \mathbb{P}[\chi_n = k] = (1-\tau)^2 k \tau^{k-1} \]
for positive integer $k$. In particular, setting $k=1$ yields $\lim_{n\to\infty} g_n/f_n = (1-\tau)^2$.
\end{prop}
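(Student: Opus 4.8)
This is \cite[Prop.~IX.2]{FS}; the plan is to run the standard singularity-analysis argument for sequence schemas, marking the number of components by an auxiliary variable. First I would introduce the bivariate generating function
\[ F(x,u)\;=\;\frac{1}{1-uG(x)}\;=\;\sum_{n,k}P_{n,k}\,u^k x^n, \]
where $P_{n,k}$ counts the size-$n$ structures with exactly $k$ components (so $P_{n,1}=g_n$), the identity being valid because a structure counted by $F$ is a sequence of components counted by $G$. The probability generating function of $\chi_n$ is then $p_n(u)=[x^n]F(x,u)/f_n$, so everything reduces to understanding the asymptotics of $[x^n]F(x,u)$ as $n\to\infty$ with $u$ near $1$.

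Next I would fix $u$ real with $0\le u<1/\tau$. Because $G$ has nonnegative coefficients, $|G(x)|\le G(r)=\tau$ for $|x|\le r$, hence $|uG(x)|\le u\tau<1$ on that disc, so $x\mapsto(1-uG(x))^{-1}$ has no singularity inside $|x|<r$ and simply inherits the singularity of $G$ at $x=r$; this is the only place subcriticality is used. Substituting the hypothesised expansion $G(z)=\tau-(c+o(1))(1-z/r)^\lambda$ and expanding the geometric series yields, as $z\to r$ inside the $\Delta$-domain $D\setminus T$,
\[ F(z,u)\;=\;\frac{1}{1-u\tau}\;-\;\frac{uc}{(1-u\tau)^2}\,(1-z/r)^\lambda\;+\;o\bigl((1-z/r)^\lambda\bigr), \]
with error uniform for $u$ in a complex neighbourhood of $1$, since $u$ enters only through this algebraic step and the expansion of $G$ is itself $u$-free. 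The constant term contributes nothing to $[z^n]$ for $n\ge1$, and the transfer theorems of singularity analysis \cite{FS} — applicable precisely because of the $\Delta$-analyticity encoded in the triangle $T$ — give
\[ [z^n]F(z,u)\;\sim\;-\frac{uc}{(1-u\tau)^2}\,\frac{n^{-\lambda-1}}{\Gamma(-\lambda)}\,r^{-n}, \]
uniformly for $u$ near $1$; in particular, for $u=1$, $f_n\sim-\frac{c}{(1-\tau)^2}\,\frac{n^{-\lambda-1}}{\Gamma(-\lambda)}\,r^{-n}$.

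Dividing, all the $n$-, $r$-, $c$- and $\Gamma$-dependent factors cancel, leaving $p_n(u)\to p(u):=\frac{(1-\tau)^2u}{(1-\tau u)^2}$ uniformly on a neighbourhood of $u=1$. Expanding, $p(u)=(1-\tau)^2\sum_{k\ge1}k\tau^{k-1}u^k$, which is the probability generating function of a negative binomial law offset by $1$; by the continuity theorem for probability generating functions this gives $\lim_{n\to\infty}\mathbb{P}[\chi_n=k]=(1-\tau)^2k\tau^{k-1}$ for each positive integer $k$, and taking $k=1$, together with $\mathbb{P}[\chi_n=1]=g_n/f_n$, gives $\lim_{n\to\infty}g_n/f_n=(1-\tau)^2$. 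For the mean and variance I would use that $p_n\to p$ holds on a complex neighbourhood of $1$, so Cauchy's estimates let one pass to derivatives: $\mathbb{E}\chi_n=p_n'(1)\to p'(1)=\frac{1+\tau}{1-\tau}$ and $\mathrm{Var}\,\chi_n=p_n''(1)+p_n'(1)-(p_n'(1))^2\to p''(1)+p'(1)-(p'(1))^2=\frac{2\tau}{(1-\tau)^2}$, the last two being short computations with the explicit $p$.

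The step I expect to be the main obstacle is making the singularity analysis uniform in the parameter $u$: one must check that the error terms — first in the expansion of $G$, then in that of $F(z,u)$, then in the transfer-theorem asymptotics — can all be taken uniform as $u$ ranges over a fixed complex neighbourhood of $1$, and that $1-uG(z)$ stays non-vanishing on a common $\Delta$-domain. This is exactly what the \emph{uniform singularity analysis} of Flajolet and Sedgewick \cite{FS} supplies; it is what forces the technical hypotheses in the statement, beyond the bare inequality $\tau<1$, and once it is in place the rest is routine bookkeeping.
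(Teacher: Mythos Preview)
The paper does not give its own proof of this proposition: it is quoted verbatim as \cite[Prop.~IX.2]{FS} and used as a black box. Your sketch is essentially the argument Flajolet and Sedgewick give there (bivariate marking of components, singular expansion of $F(z,u)$ inherited from that of $G$, transfer, then the continuity theorem for probability generating functions), and the computations you record for $p'(1)$ and $p''(1)+p'(1)-p'(1)^2$ are correct. So there is nothing to compare against in this paper; your write-up is a faithful and correct outline of the cited proof.
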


\begin{prop}[{\cite[Thm.\ V.1 \& Prop.\ IX.7]{FS}}]
\label{prop:supercritical} In the supercritical case, let $\rho$ be the (unique) positive root of $G(x) = 1$, which is also the radius of convergence of $F(x)$. Assume that the coefficients of $G(x)$ are aperiodic (in the sense that there do not exist integers $r \ge 0$ and $d \ge 2$ and a power series $H(x)$ such that $G(x) = x^r H(x^d)$). Define
\[ \alpha = \frac{1}{\rho G'(\rho)} \quad \text{and} \quad \beta = \frac{\rho G''(\rho) + G'(\rho) - \rho G'(\rho)^2}{\rho^2 G'(\rho)^3}. \]
Then:
\begin{enumerate}[(a)]
\item The mean of $\chi_n$ is asymptotically $\alpha n$, and the variance is asymptotically $\beta n$. (Note that the variance is given incorrectly in \cite[Prop.\ IX.7]{FS}, but correctly in \cite[Thm.\ V.1]{FS}.)
\item The distribution of $\chi_n$ is concentrated, in the sense that, for every $\varepsilon > 0$, the probability that $|\chi_n - \alpha n| < \varepsilon n$ converges to $1$. (This follows directly from (a).)
\item The distribution of the standardized variable $\frac{\chi_n - \alpha n}{\sqrt{\beta n}}$ converges to the standard normal distribution:
\[ \lim_{n\to\infty} \mathbb{P}\left[\chi_n < \alpha n + t\sqrt{\beta n}\right] = \int_{-\infty}^t \frac{1}{\sqrt{2\pi}} e^{-u^2/2}\,du \]
for $t \in \mathbb{R}$.
\item The coefficient of $x^n$ in $F(x)$ is asymptotically $\alpha \rho^{-n}$.
\end{enumerate}
\end{prop}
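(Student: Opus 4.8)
The plan is to obtain all four parts from the meromorphic structure of the sequence generating function --- univariate for (d) and bivariate (with a variable marking blocks) for (a)--(c). This is exactly the supercritical sequence schema analyzed in \cite{FS}, so in practice the argument is a translation of results there into our notation, preceded by a check that our $F$ and $G$ satisfy their hypotheses.

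First, for part (d): supercriticality says $G$ is continuous and increasing on $[0,r)$ with $\lim_{x\to r^-}G(x)=\tau>1$, so there is a unique $\rho\in(0,r)$ with $G(\rho)=1$; since $G'(\rho)>0$ and $G$ is aperiodic, the point $x=\rho$ is the only singularity of $F=1/(1-G)$ on the circle $|x|=\rho$, and it is a simple pole with $F(x)\sim\frac{1/(\rho G'(\rho))}{1-x/\rho}$ as $x\to\rho$. Meromorphic coefficient asymptotics (the transfer theorem of \cite{FS}) then give $f_n=[x^n]F(x)\sim\alpha\rho^{-n}$ with $\alpha=1/(\rho G'(\rho))$.

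Next, for parts (a) and (c): set $F(x,u)=\frac{1}{1-u\,G(x)}$, which is the generating function $\sum_{n,k}c_{n,k}\,x^n u^k$ where $c_{n,k}$ counts the size-$n$ structures with $k$ blocks, so the probability generating function of $\chi_n$ is $[x^n]F(x,u)/f_n$. For $u$ ranging over a small complex neighbourhood of $1$, the equation $u\,G(\rho(u))=1$ has a unique analytic solution $\rho(u)$ with $\rho(1)=\rho$, and $\rho(u)$ stays a simple pole of $x\mapsto F(x,u)$ with analytic, non-vanishing residue, so $[x^n]F(x,u)\sim c(u)\,\rho(u)^{-n}$ uniformly for such $u$. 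This puts the probability generating function of $\chi_n$ in quasi-powers form $A(u)\,B(u)^n\,(1+o(1))$ with $B(u)=\rho/\rho(u)$, and the quasi-powers theorem of \cite{FS} then delivers the Gaussian limit law of part (c) together with $\mathbb{E}\,\chi_n\sim \alpha n$ and $\mathrm{Var}(\chi_n)\sim\beta n$. The constants come out right by implicit differentiation of $u\,G(\rho(u))=1$ at $u=1$: this yields $\rho'(1)=-1/G'(\rho)$ and $\rho''(1)=(2G'(\rho)^2-G''(\rho))/G'(\rho)^3$, and substituting these into $\alpha=-\rho'(1)/\rho$ and into the standard variance coefficient $(\log B)''(1)+(\log B)'(1)$ recovers exactly the stated $\alpha$ and $\beta$ (the latter taken from \cite[Thm.\ V.1]{FS}, as the statement already notes).

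Finally, part (b) is a one-line consequence of (a): by Chebyshev, $\mathbb{P}\big[\,|\chi_n-\mathbb{E}\,\chi_n|\ge\tfrac{\varepsilon}{2}n\,\big]\le\mathrm{Var}(\chi_n)/(\tfrac{\varepsilon}{2}n)^2=O(1/n)\to0$, and $\mathbb{E}\,\chi_n=\alpha n+o(n)$, so $\mathbb{P}\big[\,|\chi_n-\alpha n|<\varepsilon n\,\big]\to1$. I expect the main obstacle to be the uniformity bookkeeping behind the quasi-powers step: one must verify that $\rho(u)$ really is the unique dominant singularity of $F(\cdot,u)$ with analytic non-zero residue, uniformly for $u$ near $1$, and that the error is uniformly $o(1)$; aperiodicity of $G$ is precisely the hypothesis ruling out rival singularities on $|x|=\rho$. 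Since \cite{FS} carries all of this out in full generality, the cleanest proof simply cites \cite[Thm.\ V.1]{FS} and \cite[Prop.\ IX.7]{FS} (plus Chebyshev for (b)) and limits our own contribution to checking that $F$ and $G$ meet their hypotheses.
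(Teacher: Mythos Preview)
Your proposal is correct, and in fact it goes beyond what the paper itself does: the paper provides no proof of this proposition at all, simply stating it as a quotation of \cite[Thm.~V.1 \& Prop.~IX.7]{FS} (with the parenthetical remark that (b) follows from (a)). Your sketch of the meromorphic/quasi-powers argument is a faithful summary of how Flajolet and Sedgewick establish the result, and your final paragraph lands exactly where the paper does---cite \cite{FS} and be done.
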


The treatment of asymptotics for the critical case \cite[Subsec.\ IX.11.2]{FS} is more subtle, and we do not cover it.

\begin{expl} \label{expl:asympt1}
\begin{itemize}
\item Let $\CC$ be one of $\Av(321)$, $\Av(312)$, or $\Av(231)$.  The function $G$ is given in Example \ref{ex.C}
above.  Since $\CC$ has $\tau = 1/2$, Proposition \ref{prop:subcritical} says that the expected number of blocks of a uniformly random permutation in $\CC_n$ is asymptotically $3$, and the number of indecomposable permutations, $g_n$, satisfies $g_n \sim \sfrac{1}{4} f_n$. The probability that a uniformly random permutation in $\CC_n$ has $k$ blocks is asymptotically $\sfrac{1}{4} k \left(\sfrac{1}{2}\right)^{k-1}$.
\item Let $\CC = \Av(3142)$. The function $G(x)$ is given by
Equation (4.10) of \cite{BonaCP} (with Lemma 4.13 of
\cite{BonaCP}).
Since $\CC$ has $\tau = 5/32$, Proposition \ref{prop:subcritical} says that the expected number of blocks is asymptotically $37/27 \approx 1.37$, and that $g_n \sim \sfrac{729}{1024} f_n$. The probability that a uniformly random permutation has $k$ blocks is asymptotically $\sfrac{729}{1024} k \left(\sfrac{5}{32}\right)^{k-1}$.
\item Let $\CC = \Av(3142, 2413)$ (separable permutations). We have $\tau = 1 - \sfrac{1}{\sqrt{2}} \approx 0.293$; the expected number of blocks is asymptotically $2\sqrt{2}-1 \approx 1.83$; we have $g_n \sim \sfrac{1}{2} f_n$ (in this case it happens that $g_n = \sfrac{1}{2} f_n$); and the probability of $k$ blocks is asymptotically $\sfrac{1}{2} k \left(1-\sfrac{1}{\sqrt{2}}\right)^{k-1}$.
\item Let $\CC = \Av(312, 231)$ (layered permutations). We have $G(x) = \frac{x}{1-x}$, from which we compute $\rho = 1/2$ and $\alpha = 1/2$ and $\beta = 1/4$, Thus, by Proposition \ref{prop:supercritical}, the expected value of the number of blocks is asymptotically $\sfrac{1}{2} n$, and the variance is asymptotically $\sfrac{1}{4} n$. After standardization (using this mean and variance), the number of blocks has a normal limiting distribution. We also conclude that $f_n \sim \sfrac{1}{2} \cdot 2^n$. Of course, since these permutations are in bijection with compositions, the distribution of the number of blocks is a binomial distribution, and in fact $f_n = \sfrac{1}{2} \cdot 2^n$.
\item Let $\CC = \Av(321, 312, 231)$ (permutations whose blocks are all $1$ or $21$, counted by the Fibonacci numbers). In this case we simply have $G(x) = x + x^2$, from which we compute $\rho = \sfrac{\sqrt{5}-1}{2} \approx 0.618$, $\alpha = \sfrac{2}{(\sqrt{5}-1)\sqrt{5}} \approx 0.724$ and $\beta = \sfrac{1}{5\sqrt{5}} \approx 0.0894$. By Proposition \ref{prop:supercritical}, the expected number of blocks is asymptotically $\sfrac{2}{(\sqrt{5}-1)\sqrt{5}} n$ with variance $\sfrac{1}{5\sqrt{5}} n$. After standardization, the number of blocks has a normal limiting distribution. Finally, $f_n \sim \sfrac{2}{(\sqrt{5}-1)\sqrt{5}} \cdot \left(\sfrac{\sqrt{5}+1}{2}\right)^n$ (which we also know because $f_n$ is the $(n+1)$st Fibonacci number).
\end{itemize}
\end{expl}

We conclude this section with a refined probabilistic 
description of random elements in the subcritical case 
described by Proposition \ref{prop:subcritical}.  
Under the assumptions of Proposition \ref{prop:subcritical}, a random element of size $n$ is very likely to have one very large block along with a few
small blocks on each side of the large block.

Recall that $a(\pi)$ is the size of the first block of the permutation $\pi$.  We shall write $a[n]$ to denote the
random variable $a(\pi)$ when $\pi$ is chosen uniformly at
random from the elements of size $n$.  In the following result, the main interest is in the case that $b_n$ tends 
to infinity very slowly.

\begin{prop}
   \label{prop.bigblock}
Assume that the hypotheses of Proposition \ref{prop:subcritical} hold.  Let $\{b_n\}$ be a sequence 
of natural numbers that tend to infinity such that 
$b_n=o(n)$.  Let $\mathcal{A}_n$ be the event that for
a randomly chosen element of size $n$, one 
block has size at least $n-b_n$ and no other block has size 
greater than $b_n$.
Then 
\\
(a) $\lim_{n\rightarrow\infty}\mathbb{P}(a[n]=j) \,=\, g_jr^j$ for all $j\in\mathbb{N}$.
\\
(b)  $\lim_{n\rightarrow\infty}\mathbb{P}(a[n]\leq b_n)=\tau$ and 
$\lim_{n\rightarrow\infty}\mathbb{P}(a[n]\geq n-b_n) \,=\, 1-\tau$.
\\
(c)  Fix integers $k\geq j\geq 1$.  Then, conditional on the
event that $\chi_n=k$, the probability that the size of the $j^{th}$ block lies in the interval $(b_n,n-b_n)$  converges to 0 as $n\rightarrow\infty$.
\\
(d)
$\lim_{n\to\infty}\mathbb{P}(\mathcal{A}_n|\chi_n=k)\,=\,1$
for every $k\in\mathbb{N}$.
\\
(e) $\lim_{n\to\infty}\mathbb{P}(\mathcal{A}_n)\,=\,1$. 
\end{prop}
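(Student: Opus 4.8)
The plan is to convert all five statements into facts about the coefficient asymptotics of $F$, $G$, and the powers $G^k$, and then run dominated-convergence arguments. For $k\ge1$ set $h_n^{(k)}:=[x^n]G(x)^k$, the number of size-$n$ structures with exactly $k$ blocks, so $\mathbb{P}(\chi_n=k)=h_n^{(k)}/f_n$ and $h_n^{(1)}=g_n$. Under the hypotheses of Proposition~\ref{prop:subcritical} --- in particular $0<r<\infty$, $0<\lambda<1$, $0<\tau<1$, and $G(z)=\tau-(c+o(1))(1-z/r)^\lambda$ on the stated domain --- expanding $F(z)=1/(1-G(z))$ and $G(z)^k$ about their common dominant singularity $z=r$ and applying the transfer theorem of singularity analysis \cite{FS} yields
\[ f_n\,\sim\,A\,r^{-n}n^{-1-\lambda},\qquad g_n\,\sim\,B\,r^{-n}n^{-1-\lambda},\qquad h_n^{(k)}\,\sim\,k\tau^{k-1}B\,r^{-n}n^{-1-\lambda}\quad(k\ge1), \]
for constants $A,B>0$ with $B/A=(1-\tau)^2$ (consistent with $g_n/f_n\to(1-\tau)^2$ in Proposition~\ref{prop:subcritical}). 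Two consequences will be used repeatedly. First, for each fixed $m$,
\[ \frac{f_{n-m}}{f_n}\to r^m,\qquad \frac{g_{n-m}}{f_n}\to(1-\tau)^2r^m,\qquad \frac{h_{n-m}^{(k-1)}}{h_n^{(k)}}\to\frac{k-1}{k\tau}\,r^m,\qquad \frac{g_{n-m}}{h_n^{(k)}}\to\frac{r^m}{k\tau^{k-1}}; \]
and since $b_n=o(n)$ forces $n-m\ge n-b_n\ge n/2$ for $n$ large, each of these ratios is at most a constant (depending on $k$) times $r^m$, \emph{uniformly} over $0\le m\le b_n$. Second, since $\tau<1$ we have, by Abel's theorem and nonnegativity of coefficients, the convergent identities $\sum_{j\ge1}g_jr^j=G(r)=\tau$, $\sum_{i\ge0}f_ir^i=F(r)=\frac{1}{1-\tau}$, and $\sum_{i\ge0}h_i^{(k)}r^i=G(r)^k=\tau^k$.

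Part (a) is then immediate: $\mathbb{P}(a[n]=j)=g_jf_{n-j}/f_n\to g_jr^j$. For part (b), write $\mathbb{P}(a[n]\le b_n)=\sum_{j=1}^{b_n}g_jf_{n-j}/f_n$; each summand tends to $g_jr^j$ and is bounded by $Cg_jr^j$ uniformly in $n$, with $\sum_jg_jr^j=\tau<\infty$, so a routine dominated-convergence argument --- fix a cutoff $J$, pass to the limit in the head $j\le J$, and bound the tail $J<j\le b_n$ by the summable majorant --- gives $\mathbb{P}(a[n]\le b_n)\to\tau$. In the same way $\mathbb{P}(a[n]\ge n-b_n)=\sum_{i=0}^{b_n}g_{n-i}f_i/f_n$, whose summands are bounded by $Cf_ir^i$ and tend to $(1-\tau)^2f_ir^i$, so the sum tends to $(1-\tau)^2F(r)=1-\tau$; in particular $\mathbb{P}(b_n<a[n]<n-b_n)\to0$.

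Part (c) is trivial for $k=1$ (the single block has size $n\notin(b_n,n-b_n)$ once $n$ is large), so fix $k\ge2$ and condition on $\chi_n=k$. The block-size sequence is then a composition $(i_1,\dots,i_k)$ of $n$ into $k$ positive parts with weight proportional to $g_{i_1}\cdots g_{i_k}$; this weighting is exchangeable in the parts, so the law of $i_j$ does not depend on $j$, and $\mathbb{P}(i_j=s\mid\chi_n=k)=g_s\,h_{n-s}^{(k-1)}/h_n^{(k)}$. Applying the dominated-convergence scheme of part (b) once to $s\le b_n$ and once to $s\ge n-b_n$ (i.e.\ $s=n-i$ with $0\le i\le b_n$) gives
\[ \mathbb{P}(i_j\le b_n\mid\chi_n=k)\to\frac{k-1}{k\tau}\sum_{s\ge1}g_sr^s=\frac{k-1}{k},\qquad \mathbb{P}(i_j\ge n-b_n\mid\chi_n=k)\to\frac{1}{k\tau^{k-1}}\sum_{i\ge0}h_i^{(k-1)}r^i=\frac1k, \]
so the probability that the $j$th block has size in $(b_n,n-b_n)$ tends to $1-\frac{k-1}{k}-\frac1k=0$.

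Parts (d) and (e) are soft deductions. For (d): conditioning on $\chi_n=k$ and applying (c) to each of the $k$ blocks, a union bound shows that the probability that \emph{some} block has size in $(b_n,n-b_n)$ tends to $0$; on the complementary event every block has size in $[1,b_n]\cup[n-b_n,n]$, and for $n$ large this forces exactly one block of size $\ge n-b_n$ (not zero, else $n\le kb_n<n$; not two, else $n\ge2(n-b_n)>n$) with all the others of size $\le b_n$ --- which is exactly the event $\mathcal{A}_n$ --- so $\mathbb{P}(\mathcal{A}_n\mid\chi_n=k)\to1$. For (e): bound $\mathbb{P}(\mathcal{A}_n^c)\le\sum_{k=1}^K\mathbb{P}(\mathcal{A}_n^c\mid\chi_n=k)+\mathbb{P}(\chi_n>K)$, where for each fixed $K$ the first sum tends to $0$ as $n\to\infty$ by (d), while $\mathbb{P}(\chi_n>K)\to1-\sum_{k=1}^Kk\tau^{k-1}(1-\tau)^2$ by Proposition~\ref{prop:subcritical} and $\sum_{k\ge1}k\tau^{k-1}(1-\tau)^2=1$; letting $n\to\infty$ and then $K\to\infty$ gives $\mathbb{P}(\mathcal{A}_n)\to1$. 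I expect the only genuine work to be in the first paragraph --- assembling the three coefficient asymptotics with their matching constants and extracting the uniform-in-$m$ bounds --- together with the standard care needed in (b) and (c) to interchange $\lim_{n\to\infty}$ with a sum whose length $b_n$ itself grows, for which the summable majorant, not mere termwise convergence, is what does the job.
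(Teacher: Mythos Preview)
Your proof is correct, and its architecture differs from the paper's in two notable ways.

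First, you invoke the full transfer theorem to extract the asymptotics $f_n\sim A\,r^{-n}n^{-1-\lambda}$, $g_n\sim B\,r^{-n}n^{-1-\lambda}$, and $h_n^{(k)}\sim k\tau^{k-1}B\,r^{-n}n^{-1-\lambda}$, and from these you derive uniform majorants $Cg_jr^j$, $Cf_ir^i$, $Ch_i^{(k-1)}r^i$ that drive your dominated-convergence arguments in (b) and (c). The paper uses much less: only $f_{n}/f_{n+1}\to r$ and $g_n/f_n\to(1-\tau)^2$. For (b) it avoids any majorant by a Fatou-style trick: with $u_n,v_n,w_n$ the three partial sums, it shows $\liminf u_n/f_n\ge\tau$ and $\liminf w_n/f_n\ge 1-\tau$ directly from termwise convergence, and since $u_n+v_n+w_n=f_n$ this forces all three limits to exist and $v_n/f_n\to0$.

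Second, for (c) the paper never analyzes the conditional block-size law. It simply bounds
\[
\mathbb{P}\bigl(b_n<a[n]<n-b_n\,\bigm|\,\chi_n=k\bigr)\;\le\;\frac{\mathbb{P}(b_n<a[n]<n-b_n)}{\mathbb{P}(\chi_n=k)}\;=\;\frac{v_n/f_n}{\mathbb{P}(\chi_n=k)}\;\longrightarrow\;0,
\]
and then invokes exchangeability of the $k$ block sizes. Your route instead computes the two conditional tails explicitly and obtains the sharper statements $\mathbb{P}(i_j\le b_n\mid\chi_n=k)\to(k-1)/k$ and $\mathbb{P}(i_j\ge n-b_n\mid\chi_n=k)\to1/k$, which are not in the paper. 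So the paper's argument is shorter and uses less machinery, while yours is more systematic and yields finer information along the way. Parts (d) and (e) are handled essentially the same way in both.
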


\begin{proof}
Note that the assumptions imply that $\lim_{n\rightarrow\infty} f_n/f_{n+1}=r$ (see equation (15) in the proof of Proposition IX.1 of \cite{FS}, whose hypotheses are the same as those of Proposition \ref{prop:subcritical}).
Part (a) follows from this and the identity $\mathbb{P}(a[n]=j)\,=\, g_jf_{n-j}/f_n$.

For part (b), define the sequences $u_n$, $v_n$ and $w_n$ by
\[   u_n\,=\,\sum_{j=1}^{b_n}g_jf_{n-j}\,, 
  \hspace{5mm} v_n\,=\,\sum_{j=b_n+1}^{n-b_n-1}g_jf_{n-j}\,,
   \hspace{5mm} w_n\,=\,\sum_{j=n-b_n}^{n}g_jf_{n-j}
     \,=\,\sum_{i=0}^{b_n}g_{n-i}f_{i}\,.
\]
Observe that $u_n+v_n+w_n=f_n$.  By part (a) we have
$\liminf_{n\rightarrow\infty}u_n/f_n\geq G(r) = \tau$.  
Recall from Proposition \ref{prop:subcritical}
that $\lim_{n\rightarrow\infty}g_n/f_n=(1-\tau)^2$.
And since $\lim_{n\rightarrow\infty}f_n/f_{n+1}=r$, 
it follows that $\lim_{n\rightarrow\infty}g_n/g_{n+1}=r$.
Moreover,
\[    \lim_{n\rightarrow\infty} \frac{g_{n-i}}{f_n} \;=\;
    \lim_{n\rightarrow\infty} \frac{g_{n-i}}{g_n}\frac{g_n}{f_n} \;=\; r^i(1-\tau)^2,
\]
and hence 
\[  \liminf_{n\rightarrow\infty}\frac{w_n}{f_n} \;\geq\; 
    \sum_{i=0}^{\infty} r^if_i(1-\tau)^2   \;=\; F(r)(1-\tau)^2   \;=\; 1-\tau
\]
because $F(r)=1/(1-\tau)$.  Since $\frac{u_n}{f_n}+\frac{v_n}{f_n}+\frac{w_n}{f_n}=1$, 
we must have
\[    \lim_{n\rightarrow\infty} \frac{u_n}{f_n}\;=\;\tau\,,
   \hspace{5mm} 
   \lim_{n\rightarrow\infty} \frac{v_n}{f_n}\;=\;0\,,
    \hspace{5mm} \hbox{and}\hspace{5mm}
   \lim_{n\rightarrow\infty} \frac{w_n}{f_n}\;=\;1-\tau\,.
\]
The first and third limits prove part (b).  The second limit
proves that 
$\lim_{n\rightarrow\infty}\mathbb{P}(b_n<a[n]<n-b_n)=0$.
%as does $\mathbb{P}(b_n<a[n]<n-b_n \hbox{ and }\chi_n=k)$ 
Now fix $k\in\mathbb{N}$.  Since $\lim_{n\rightarrow\infty}\mathbb{P}(\chi_n=k)$ is nonzero, 
it follows that
\begin{equation}
    \label{eq.blockbig}
  \limsup_{n\rightarrow\infty} \mathbb{P}(b_n<a[n]<n-b_n\,|\,\chi_n=k)  \;\leq \;
     \limsup_{n\rightarrow\infty}
    \frac{\mathbb{P}(b_n<a[n]<n-b_n)}{\mathbb{P}(\chi_n=k)}
    \;=\; 0.    
\end{equation}
For fixed $\chi_n=k$, the distribution of the size of the 
$j^{th}$ block is the same for every $j$.  This observation 
and equation (\ref{eq.blockbig}) imply part (c).
Part (d) is a consequence of part (c), since some block has size at least $n/k$, and at most one block can have size greater than or equal to $n-b_n$ 
(recall $b_n=o(n)$, so  $b_n<\frac{n}{k}$ and 
$\frac{n}{2} <n-b_n$ for large $n$).
Finally, part (e) follows from part (d) and the fact that
$\chi_n$ converges in distribution (Proposition \ref{prop:subcritical}).
\end{proof}

\section{Asymptotic enumeration of decomposable affine permutation classes} \label{sec:asympt}

We continue to assume that $\CC$ is a sum closed permutation class, letting $F(x)$, $\widetilde{F}(x)$, and $G(x)$ denote the respective generating functions of $\CC$, $\aff{\CC}$, and $\ind{\CC}$. Also recall that $\first(\pi)$ is the size of the first indecomposable block of $\pi$, and that $\chi(\pi)$ is the number of indecomposable blocks of $\pi$.

We begin by proving a lemma that we will need for both asymptotic results in this section:

\begin{lem} \label{lem:nsum}
For $n \ge 1$, $\widetilde{f}_n = |\aff{\CC}_n| = n \sum_{\pi \in \CC_n} \frac{1}{\chi(\pi)}$.
\end{lem}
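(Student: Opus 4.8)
The plan is to relate both sides to the count of pairs $(\pi, r)$ already described in Lemma~\ref{lem:pair}. From Lemma~\ref{lem:pair} we know $\widetilde{f}_n = \sum_{\pi \in \CC_n} \first(\pi)$, so it suffices to show $\sum_{\pi \in \CC_n} \first(\pi) = n \sum_{\pi \in \CC_n} \frac{1}{\chi(\pi)}$. My approach is to find a bijection (or a sign-free double count) on the set of triples arising from cyclic rotation of the block sequence of a decomposable permutation, using the fact that summing a block size over all cyclic rearrangements of the blocks distributes the total size $n$ evenly.

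First I would make precise the notion of cyclically rotating blocks. Given $\pi \in \CC_n$ with indecomposable block decomposition $\pi = \pi_1 \oplus \cdots \oplus \pi_k$ (where $k = \chi(\pi)$), define for $0 \le j < k$ the permutation $\rho_j(\pi) := \pi_{j+1} \oplus \cdots \oplus \pi_k \oplus \pi_1 \oplus \cdots \oplus \pi_j$, obtained by cyclically shifting the block sequence. Each $\rho_j(\pi)$ lies in $\CC_n$ because $\CC$ is sum closed, and it has the same number of blocks $k$, the same underlying multiset of blocks, and the same size $n$. The key combinatorial identity is that $\sum_{j=0}^{k-1} \first(\rho_j(\pi)) = \sum_{i=1}^k |\pi_i| = n$, since each block $\pi_i$ becomes the first block for exactly one value of $j$.

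Next I would organize $\CC_n$ into equivalence classes under the relation "same multiset of blocks in some cyclic order'' — more carefully, I would work with the set of pairs $(\pi, j)$ with $\pi \in \CC_n$ and $0 \le j < \chi(\pi)$, and the map $(\pi, j) \mapsto \rho_j(\pi)$. The subtlety is that this map need not be injective: if the block sequence of $\pi$ has a nontrivial cyclic symmetry (period $p < k$ dividing $k$), then distinct values of $j$ can give the same permutation, and conversely a given $\sigma$ in the image can arise from several $(\pi,j)$. However, the multiplicities match up: for a fixed $\sigma \in \CC_n$, the number of pairs $(\pi,j)$ with $\rho_j(\pi) = \sigma$ equals exactly $\chi(\sigma)$ (one can recover $\pi$ and $j$ together from $\sigma$ up to the cyclic action, and the orbit-size bookkeeping — orbit size times stabilizer size equals $k$ — makes this clean). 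Hence, summing $\first$ over all pairs $(\pi, j)$ can be computed two ways:
\[
\sum_{\pi \in \CC_n} \sum_{j=0}^{\chi(\pi)-1} \first(\rho_j(\pi)) \;=\; \sum_{\pi \in \CC_n} n \;=\; n\,|\CC_n|,
\]
while regrouping by the value $\sigma = \rho_j(\pi)$ gives
\[
\sum_{\sigma \in \CC_n} \#\{(\pi,j) : \rho_j(\pi) = \sigma\} \cdot \first(\sigma) \;=\; \sum_{\sigma \in \CC_n} \chi(\sigma)\,\first(\sigma)\cdot\frac{1}{\chi(\sigma)}?
\]
That last line is not quite the right grouping; instead I would compute $\sum_{(\pi,j)} \first(\rho_j(\pi))$ by noting that as $(\pi,j)$ ranges over all pairs mapping into a fixed orbit, the first-block sizes encountered are exactly the block sizes of $\sigma$ each counted with the right multiplicity, yielding $n$ per orbit scaled by orbit data. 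The cleanest route is: partition $\CC_n$ into cyclic-equivalence classes, show each class $\mathcal{O}$ contributes $|\mathcal{O}| \cdot n / (\text{common block count } k_{\mathcal{O}})$ to both $\sum_\pi \first(\pi)$ and to $n\sum_\pi 1/\chi(\pi)$, and conclude.

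The main obstacle I anticipate is handling the cyclic symmetry carefully — i.e., the case where a permutation's block sequence is itself periodic — so that the orbit-counting is airtight and I don't accidentally over- or under-count. A slick way to sidestep orbit bookkeeping entirely would be to prove the identity class-by-class: fix a cyclic equivalence class, let $k$ be the common number of blocks and $n$ the common size; then $\sum_{\pi \text{ in class}} \first(\pi)$ equals (number of pairs $(\pi,j)$ in the class) divided by (common stabilizer behavior), which after simplification is $|\text{class}|\cdot n/k$, and likewise $n \sum_{\pi \text{ in class}} 1/\chi(\pi) = n \cdot |\text{class}|/k$. Both sides agree termwise over classes, and summing over all classes gives the result. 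I would present the argument in this class-by-class form to keep it transparent, invoking Lemma~\ref{lem:pair} only at the very end to rewrite the left side as $\widetilde{f}_n$.
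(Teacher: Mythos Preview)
Your approach is correct and is essentially the same as the paper's proof: both start from Lemma~\ref{lem:pair}, partition $\CC_n$ into cyclic-rotation equivalence classes of block sequences, and verify the identity class by class while taking care with periodic block sequences. The paper streamlines the orbit bookkeeping you worry about by introducing the period $\lambda(\pi)$ (the smallest $j$ such that $\pi$ is a sum of $n/j$ copies of some $\rho\in S_j$) and recording the two clean identities $|[\pi]| = \chi(\pi)\lambda(\pi)/n$ and $\sum_{\pi'\in[\pi]} a(\pi') = \lambda(\pi)$; with these in hand the computation collapses to a three-line chain, which is exactly the ``class-by-class'' simplification you were aiming for.
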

\begin{proof} \let\qqed\qed \let\qed\negativespace % Fixes the qed for this proof
For permutations $\pi, \pi' \in S_n$, say that $\pi \approx \pi'$ if there are permutations $\sigma$ and $\tau$ such that $\pi = \sigma \oplus \tau$ and $\pi' = \tau \oplus \sigma$. Let $[\pi]$ denote the equivalence class of $\pi$ under $\approx$; that is, $[\pi]$ is the set of permutations obtained by cyclically permuting the summands in the sum decomposition of $\pi$. Since $\CC$ is sum closed, it is closed under the equivalence relation $\approx$, in the sense that if $\pi \in \CC$ then $[\pi] \subseteq \CC$. Let $\CC_n/{\approx}$ denote the set of equivalence classes of $\CC_n$ under $\approx$.

For $\pi \in S_n$, define the \emph{period} of $\pi$, denoted $\lambda(\pi)$, to be the smallest $j$ such that $\pi$ is a sum of $n/j$ copies of a permutation of size $j$:
\[ \pi = \bigoplus_{i=1}^{n/j} \rho \]
for some $\rho \in S_j$. Observe that the quantities we have defined are related by
\begin{equation} \label{eq:relation}
|[\pi]| = \frac{\chi(\pi)\,\lambda(\pi)}{n}
\end{equation}
and
\begin{equation} \label{eq:lambda}
\sum_{\pi' \in [\pi]} a(\pi') = \lambda(\pi).
\end{equation}
Now,
\begin{align*}
\widetilde{f}_n &= \sum_{\pi \in \CC_n} \first(\pi) & \text{(by Lemma \ref{lem:pair})} \\
&= \sum_{[\pi] \in \CC_n/{\approx}} \,\sum_{\pi' \in [\pi]} a(\pi') \\
&= \sum_{[\pi] \in \CC_n/{\approx}} \lambda(\pi) & \text{(by \eqref{eq:lambda})} \\
&= \sum_{\pi \in \CC_n} \frac{\lambda(\pi)}{|[\pi]|} \\
&= \sum_{\pi \in \CC_n} \frac{n}{\chi(\pi)}. & \text{(by \eqref{eq:relation})} \tag*{\qqed}
\end{align*}
\end{proof}

The following elementary lemma will be used to prove Theorem \ref{prop:subasympt}.  It can be viewed as a property 
of convergence in distribution for discrete distributions.
\begin{lem}
   \label{lem.limE}
Let $p(\cdot),p_1(\cdot),p_2(\cdot),\ldots$ be probability 
distributions on $\mathbb{N}$ such that $\lim_{n\rightarrow\infty}p_n(k)=p(k)$ for every $k\in\mathbb{N}$.  Then for every bounded sequence $\{a_k\}$,
\[    \lim_{n\rightarrow\infty}\sum_{k=1}^{\infty}
    a_k\, p_n(k)  \;=\;  \sum_{k=1}^{\infty} a_k\,p(k).
\]
\end{lem}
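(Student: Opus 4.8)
The plan is to prove this as a routine application of the standard ``split the sum'' technique for proving that convergence in distribution plus uniform integrability (here, trivially satisfied because $\{a_k\}$ is bounded) implies convergence of expectations. The key point is that all the mass of $p$ and of each $p_n$ lives on $\mathbb{N}$, and the tails of the $p_n$ can be controlled uniformly in $n$ using that the partial sums of $p$ approach $1$.

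First I would fix $\varepsilon > 0$ and let $B = \sup_k |a_k| < \infty$. Since $\sum_{k=1}^\infty p(k) = 1$, choose $K$ so large that $\sum_{k=1}^K p(k) > 1 - \varepsilon$, equivalently $\sum_{k>K} p(k) < \varepsilon$. Next, since $p_n(k) \to p(k)$ for each of the finitely many $k \in \{1,\ldots,K\}$, we have $\sum_{k=1}^K p_n(k) \to \sum_{k=1}^K p(k) > 1-\varepsilon$, so for all $n$ sufficiently large, $\sum_{k=1}^K p_n(k) > 1 - \varepsilon$, and hence $\sum_{k>K} p_n(k) < \varepsilon$ for all large $n$. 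Then I would estimate
\[
\left| \sum_{k=1}^\infty a_k p_n(k) - \sum_{k=1}^\infty a_k p(k) \right| \;\le\; \sum_{k=1}^K |a_k|\,|p_n(k) - p(k)| \;+\; B\sum_{k>K} p_n(k) \;+\; B\sum_{k>K} p(k).
\]
The last two terms are each at most $B\varepsilon$ for $n$ large. The first term is a finite sum of $K$ terms, each of which tends to $0$ as $n \to \infty$ (using $|p_n(k)-p(k)| \to 0$ and $|a_k| \le B$), so for $n$ large it is at most $\varepsilon$. Altogether the left side is at most $(2B+1)\varepsilon$ for all large $n$, and since $\varepsilon$ was arbitrary the limit follows.

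There is no real obstacle here; the only thing to be careful about is the order of quantifiers, namely that $K$ must be chosen \emph{before} invoking the pointwise convergence $p_n(k) \to p(k)$, so that the tail bound $\sum_{k>K} p_n(k) < \varepsilon$ holds uniformly for large $n$ rather than only for each $n$ separately. Note also that the statement and proof do not actually require the $a_k$ to be nonnegative or the $p_n$ to have any particular structure beyond being probability distributions on $\mathbb{N}$, so the argument is as stated. I would remark that this is precisely the fact that $\chi_n \Rightarrow \chi$ (convergence in distribution on the discrete space $\mathbb{N}$) implies $\mathbb{E}[a_{\chi_n}] \to \mathbb{E}[a_\chi]$ for bounded test functions $a$, which is how it will be applied, e.g., with $a_k = 1/k$ in combination with Lemma \ref{lem:nsum} and Proposition \ref{prop:subcritical}.
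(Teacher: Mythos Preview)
Your proof is correct and is the standard $\varepsilon$--$K$ tail-splitting argument; the paper itself does not supply a proof of this lemma, calling it ``elementary'' and leaving it to the reader. Your remark about its interpretation as convergence in distribution on $\mathbb{N}$ implying convergence of expectations of bounded functions matches exactly how the paper frames and uses it.
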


\begin{thm} \label{prop:subasympt}
If $\CC$ is subcritical and its corresponding generating functions $F(x)$ and $G(x)$ satisfy the complex-analytic conditions alluded to in Proposition \ref{prop:subcritical}, then $\widetilde{f}_n \sim (1-\tau)n f_n$ and $g_n \sim (1-\tau)^2 f_n$, with $\tau$ defined as in Section \ref{sec:schema}.
\end{thm}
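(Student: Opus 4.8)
The plan is to combine Lemma~\ref{lem:nsum}, which gives $\widetilde{f}_n = n\sum_{\pi\in\CC_n}\frac{1}{\chi(\pi)}$, with the limiting distribution of $\chi_n$ from Proposition~\ref{prop:subcritical}. Rewriting the lemma's formula probabilistically, $\widetilde{f}_n = n f_n\,\mathbb{E}\!\left[\frac{1}{\chi_n}\right]$, where $\chi_n$ is the number of blocks of a uniformly random element of $\CC_n$. So proving $\widetilde{f}_n \sim (1-\tau)n f_n$ amounts to showing $\mathbb{E}[1/\chi_n] \to 1-\tau$. The statement $g_n\sim(1-\tau)^2 f_n$ is just the $k=1$ case of Proposition~\ref{prop:subcritical} (already stated there), so the real content is the first asymptotic.

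First I would record that, by Proposition~\ref{prop:subcritical}, $\mathbb{P}[\chi_n = k]\to (1-\tau)^2 k\tau^{k-1}=:p(k)$ for each positive integer $k$, and that this is a genuine probability distribution on $\mathbb{N}$ (indeed $\sum_{k\ge1}(1-\tau)^2 k\tau^{k-1} = (1-\tau)^2\cdot\frac{1}{(1-\tau)^2}=1$). Next, since $\chi(\pi)\ge1$ always, the sequence $a_k := 1/k$ is bounded, so Lemma~\ref{lem.limE} applies directly with $p_n(k):=\mathbb{P}[\chi_n=k]$ and this choice of $a_k$. It gives
\[
\lim_{n\to\infty}\mathbb{E}\!\left[\frac{1}{\chi_n}\right] \;=\; \lim_{n\to\infty}\sum_{k\ge1}\frac{1}{k}\,\mathbb{P}[\chi_n=k] \;=\; \sum_{k\ge1}\frac{1}{k}\,(1-\tau)^2 k\tau^{k-1} \;=\; (1-\tau)^2\sum_{k\ge1}\tau^{k-1} \;=\; (1-\tau)^2\cdot\frac{1}{1-\tau} \;=\; 1-\tau.
\]
Combining this with $\widetilde{f}_n = n f_n\,\mathbb{E}[1/\chi_n]$ from Lemma~\ref{lem:nsum} yields $\widetilde{f}_n\sim (1-\tau)n f_n$, as desired.

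The only subtlety — and the step I would be most careful about — is the interchange of limit and infinite sum, i.e.\ the justification that knowing the pointwise convergence $p_n(k)\to p(k)$ for each $k$ suffices to conclude convergence of $\sum_k a_k p_n(k)$. This is exactly what Lemma~\ref{lem.limE} is for: its proof is the standard argument that pointwise convergence of probability mass functions on $\mathbb{N}$ implies convergence of expectations of any bounded function, via splitting the tail $\sum_{k>N}$ (uniformly small because $\sum_k p_n(k)=1$ and $\{a_k\}$ is bounded) from the finite head (which converges termwise). Since $1/k\le 1$, boundedness is immediate, so there is nothing delicate left once Lemma~\ref{lem.limE} is in hand; all the analytic heavy lifting has been absorbed into Proposition~\ref{prop:subcritical} and its hypotheses, which we have assumed. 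Thus the proof is essentially a one-line application of the two lemmas plus the geometric-series identity above.
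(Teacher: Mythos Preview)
Your proof is correct and follows essentially the same approach as the paper: both use Lemma~\ref{lem:nsum} to write $\widetilde{f}_n = n f_n\,\mathbb{E}[1/\chi_n]$, then invoke Proposition~\ref{prop:subcritical} for the limiting distribution of $\chi_n$ together with Lemma~\ref{lem.limE} (applied to the bounded sequence $a_k=1/k$) to conclude $\mathbb{E}[1/\chi_n]\to 1-\tau$, while the statement $g_n\sim(1-\tau)^2 f_n$ is read off directly from Proposition~\ref{prop:subcritical}.
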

\begin{proof} \let\qqed\qed \let\qed\negativespace % Fixes the qed for this proof
The statement $g_n \sim (1-\tau)^2 f_n$ is covered by Proposition \ref{prop:subcritical}. We prove the other statement:
\begin{align*}
\widetilde{f}_n &= n \sum_{\pi \in \CC_n} \frac{1}{\chi(\pi)} & \text{(by Lemma \ref{lem:nsum})} \\
&= n \sum_{k=1}^n \sum_{\substack{\pi\in\CC_n\\ \chi(\pi) = k}} \frac{1}{k} \\
&= n \sum_{k=1}^n \frac{1}{k} \cdot \#\{\pi \in \CC_n \colon \chi(\pi) = k\} \\
&= n \sum_{k=1}^n \frac{1}{k} \,f_n \,\mathbb{P}[\chi_n = k] \\
&\sim n f_n \sum_{k=1}^n \frac{1}{k} (1-\tau)^2 k \tau^{k-1} & \text{(by Proposition \ref{prop:subcritical} and Lemma \ref{lem.limE})} \\
&\sim (1-\tau)^2 n f_n \sum_{k=1}^\infty \tau^{k-1} \\
&= (1-\tau) n f_n. \tag*{\qqed}
\end{align*}
\end{proof}

\begin{thm} \label{prop:superasympt}
If $F(x)$ and $G(x)$ are a supercritical sequence schema and $G(x)$ satisfies the aperiodicity condition from Proposition \ref{prop:supercritical}, then
\[ \widetilde{f}_n \sim \alpha^{-1} f_n \sim \rho^{-n}, \]
with $\rho$ and $\alpha$ defined as in Proposition \ref{prop:supercritical}: $\rho$ is the (unique) positive root of $G(x) = 1$, which is also the radius of convergence of $F(x)$, and $\alpha^{-1} = \rho G'(\rho)$.
\end{thm}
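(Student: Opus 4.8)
The plan is to combine the exact identity of Theorem \ref{thm:exactclass}, namely $\widetilde{F}(x) = xF'(x)/F(x)$, with the supercritical singularity analysis of $F(x)$ supplied by Proposition \ref{prop:supercritical}. The key point is that in the supercritical case the dominant singularity of $F(x)$ is a simple pole at $x = \rho$, where $\rho$ is the unique positive root of $G(x) = 1$; this is what makes $xF'(x)/F(x)$ have the same type of singularity as $F(x)$ itself, so that the coefficient asymptotics of $\widetilde{f}_n$ and $f_n$ differ only by a constant.

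First I would establish that $F(x)$ has a simple pole at $x = \rho$. Since $G(\rho) = 1$ and $G'(\rho) > 0$ (the coefficients of $G$ are nonnegative and not eventually zero in the supercritical case), we can write $1 - G(x) = G'(\rho)(\rho - x) + O((\rho-x)^2)$ near $x = \rho$, so
\[ F(x) = \frac{1}{1 - G(x)} = \frac{1}{G'(\rho)(\rho - x)}\bigl(1 + O(\rho - x)\bigr) = \frac{\rho}{G'(\rho)}\cdot\frac{1}{\rho - x} + O(1) \]
as $x \to \rho$. Recalling $\alpha = 1/(\rho G'(\rho))$, this says $F(x) \sim \alpha\rho/(\rho - x)$, i.e. $F$ has residue $\alpha\rho$ at its simple pole $x = \rho$; extracting coefficients recovers the known statement $f_n \sim \alpha\rho^{-n}$ of Proposition \ref{prop:supercritical}(d). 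Next I would differentiate: $F'(x) \sim \alpha\rho/(\rho - x)^2$ near $x = \rho$, so
\[ \widetilde{F}(x) = \frac{xF'(x)}{F(x)} \sim \frac{x \cdot \alpha\rho/(\rho-x)^2}{\alpha\rho/(\rho-x)} = \frac{x}{\rho - x}, \]
and in particular $\widetilde{F}(x)$ has a simple pole at $x = \rho$ with residue $\rho$ (evaluate the numerator $x$ at $x = \rho$). The aperiodicity hypothesis guarantees that $\rho$ is the unique singularity of $F$, hence of $F'$ and $\widetilde{F}$, on the circle $|x| = \rho$, so transfer theorems (e.g. \cite[Thm.\ IV.10]{FS}) give $\widetilde{f}_n \sim \rho \cdot \rho^{-n-1} = \rho^{-n}$. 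Comparing with $f_n \sim \alpha\rho^{-n}$ yields $\widetilde{f}_n \sim \alpha^{-1} f_n \sim \rho^{-n}$, as claimed.

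One technical point to handle carefully is ensuring the singularity analysis applies: I need $F$ (and thus $F'$ and $\widetilde{F}$) to be analytically continuable to a domain of the form required for the transfer theorem, namely a disk slightly larger than $|x| = \rho$ with a small sector removed at $x = \rho$. This is standard for supercritical schemas — $G$ is analytic on a disk of radius $r > \rho$ (its own radius of convergence, possibly $\infty$), and $1 - G(x)$ has only the simple zero at $x = \rho$ inside $|x| < r$ by the argument already given in Section \ref{sec:schema}, with aperiodicity ruling out other zeros on $|x| = \rho$; so $F = 1/(1-G)$ is meromorphic on $|x| < r$ with a single pole there, and the same holds for $F'$ and for $\widetilde{F} = xF'/F$, whose only pole is still at $x = \rho$ since the zeros of $F$ (if any) inside $|x| < r$ are cancelled, being poles of $1/F$ but of $F'/F$ they contribute only... — actually I should note that $\widetilde{F} = x(\log F)'$ could a priori pick up poles at zeros of $F$, but near such a point $F(x) = c(x - x_0)^m(1 + \cdots)$ forces $(\log F)'$ to have a simple pole there too; however for the dominant-singularity analysis only the behavior near $x = \rho$ matters, and one checks $|x_0| > \rho$ whenever such $x_0$ exist, or more simply restricts attention to the principal part at $\rho$. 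The main obstacle, then, is this bookkeeping of the analytic structure of $\widetilde{F}$ and the verification that its dominant singularity is exactly the simple pole at $\rho$ with residue $\rho$ — once that is nailed down, the coefficient asymptotics and the final comparison are immediate.
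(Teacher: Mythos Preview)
Your argument is correct and takes a genuinely different route from the paper's. The paper proves this theorem probabilistically: it invokes Lemma~\ref{lem:nsum} to write $\widetilde{f}_n = n\sum_{\pi\in\CC_n} 1/\chi(\pi) = n f_n\,\mathbb{E}[1/\chi_n]$, and then uses the concentration statement of Proposition~\ref{prop:supercritical}(b) (that $\chi_n/n \to \alpha$ in probability) to conclude $\mathbb{E}[n/\chi_n] \to \alpha^{-1}$. Your approach instead works directly with the generating-function identity $\widetilde{F}(x)=xF'(x)/F(x)$ of Theorem~\ref{thm:exactclass} and extracts coefficient asymptotics via singularity analysis at the simple pole $x=\rho$.

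Two remarks. First, your digression about possible zeros of $F$ is unnecessary: since $F = 1/(1-G)$ is a reciprocal of an analytic function on $|x|<r$, it has no zeros there, so $\widetilde{F}=xF'/F$ has no poles coming from that source. Second, the verification that $\rho$ is the \emph{only} pole in a disk of radius slightly larger than $\rho$ is easy once you note that $|G(x)|\le G(|x|)<1$ for $|x|<\rho$ (nonnegative coefficients), so $1-G$ does not vanish inside the disk; aperiodicity then handles the circle $|x|=\rho$, and isolated zeros give a slightly larger pole-free annulus. With those two points settled, your residue computation $\widetilde{F}(x) = \rho/(\rho-x) + O(1)$ and the coefficient extraction go through cleanly. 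Compared with the paper, your method avoids both Lemma~\ref{lem:nsum} and any appeal to the distribution of $\chi_n$; the paper's method, on the other hand, is shorter once those ingredients are in place and gives a probabilistic explanation of the constant $\alpha^{-1}$ as the limiting value of $n/\chi_n$.
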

\begin{proof} \let\qqed\qed \let\qed\negativespace % Fixes the qed for this proof
\begin{align*}
\widetilde{f}_n &= \sum_{\pi \in \CC_n} \frac{n}{\chi(\pi)} & \text{(by Lemma \ref{lem:nsum})} \\
&\sim \sum_{\pi \in \CC_n} \frac{1}{\alpha} & \text{(by Proposition \ref{prop:supercritical}(b))} \\
&= \alpha^{-1} f_n \\
&\sim \rho^{-n} & \text{(by Proposition \ref{prop:supercritical}(d))} \tag*{\qqed}
\end{align*}
\end{proof}

\begin{rem} We can interpret these results in the context of Proposition \ref{prop:exact}(b), which says that
\begin{equation} \label{eq:bounds} \max\{ng_n, f_n\} \le \widetilde{f}_n \le n f_n.\end{equation}
\begin{itemize}
\item In the subcritical case, $g_n \sim (1-\tau)^2 f_n$, so \eqref{eq:bounds} becomes
\[ (1-\tau)^2 nf_n \lesssim \widetilde{f}_n \le n f_n. \]
We now see that $\widetilde{f}_n$ falls at the geometric mean of these bounds, since by Theorem \ref{prop:subasympt} we have $\widetilde{f}_n \sim (1-\tau)nf_n$.
\item In the supercritical case, $g_n$ is exponentially smaller than $f_n$, so \eqref{eq:bounds} becomes
\[ f_n \le \widetilde{f}_n \le n f_n. \]
We now see by Theorem \ref{prop:superasympt} that $\widetilde{f}_n$ falls near the lower bound, since asymptotically it is a constant multiple of $f_n$ and not of $n f_n$.
\end{itemize} \end{rem}

\begin{expl}
Applying Theorems \ref{prop:subasympt} and \ref{prop:superasympt} to the permutation classes from Example \ref{expl:asympt1} yields:
\begin{itemize}
\item If $\CC = \Av(321)$, then $g_n \sim \sfrac{1}{4} f_n$ and $\widetilde{f}_n \sim \sfrac{1}{2} n f_n$. Since $f_n \sim \sfrac{1}{\sqrt{\pi}} n^{-3/2} 4^n$, we obtain $g_n \sim \sfrac{1}{4\sqrt{\pi}} n^{-3/2} 4^n$ and $\widetilde{f}_n \sim \sfrac{1}{2\sqrt{\pi}} n^{-1/2} 4^n$.
\item If $\CC = \Av(3142)$, then $g_n \sim \sfrac{729}{1024} f_n$ and $\widetilde{f}_n \sim \sfrac{27}{32} n f_n$. %Since $f_n \sim$, we obtain $g_n \sim $ and $\widetilde{f}_n \sim$.
\item If $\CC = \Av(3142, 2413)$ (separable permutations), then $g_n \sim \sfrac{1}{2} f_n$ and $\widetilde{f}_n \sim \sfrac{1}{\sqrt{2}} n f_n$. %Since $f_n \sim$, we obtain $g_n \sim $ and $\widetilde{f}_n \sim$.
\item If $\CC = \Av(312, 231)$ (layered permutations), then $\widetilde{f}_n \sim 2f_n \sim 2^n$.
\item If $\CC = \Av(321, 312, 231)$ (permutations whose blocks are all $1$ or $21$), then $\widetilde{f}_n \sim \sfrac{(\sqrt{5}-1)\sqrt{5}}{2} f_n \sim \left(\sfrac{\sqrt{5}+1}{2}\right)^n$.
\end{itemize}
\end{expl}

\section{Recognizing when an affine permutation class is decomposable} \label{sec:recognizing}

A class of affine permutations may have the property that every element of the class is decomposable. We will say that a class with this property is \emph{decomposable}. Note that, for a set of (ordinary) permutations $R$, the affine class $\AvA(R)$ is decomposable if and only if $\AvA(R) = \oplus \Av(R)$, if and only if $\AvA(R) = \AvBA(R) = \oplus \Av(R)$. Thus, if $\AvA(R)$ is decomposable, then the methods of the previous sections give us exact and asymptotic enumeration not only of $\oplus \Av(R)$ but of $\AvBA(R)$ and $\AvA(R)$.

There is an easy way to tell whether a class is decomposable, involving an affine permutation called the \emph{infinite (increasing) oscillation}. This is the permutation
\[ \OO = \begin{pmatrix}\cdots & -1 & 0 & 1 & 2 & 3 & 4 & \cdots \\ \cdots & 1 & -2 & 3 & 0 & 5 & 2 & \cdots\end{pmatrix} \in \widetilde{S}_2, \]
as shown in Figure \ref{fig:oscillation}.
\begin{figure}
\begin{center}
\begin{tikzpicture}[scale=0.25]
\draw (1,3) [fill=black] circle (.4);
\draw (2,0) [fill=black] circle (.4);
\draw (3,5) [fill=black] circle (.4);
\draw (4,2) [fill=black] circle (.4);
\draw (5,7) [fill=black] circle (.4);
\draw (6,4) [fill=black] circle (.4);
\draw (7,9) [fill=black] circle (.4);
\draw (8,6) [fill=black] circle (.4);
\draw (9,11) [fill=black] circle (.4);
\draw (10,8) [fill=black] circle (.4);
\draw (11,13) [fill=black] circle (.4);
\draw (12,10) [fill=black] circle (.4);
\draw [thick] (-2,4) -- (15,4);
\draw [thick] (4,-2) -- (4,15);
\node at (-1,-0.5) {$\iddots$};
\node at (14,14) {$\iddots$};
\draw [very thick] (1,3) -- (2,0);
\draw [very thick] (1,3) -- (4,2);
\draw [very thick] (3,5) -- (4,2);
\draw [very thick] (3,5) -- (6,4);
\draw [very thick] (5,7) -- (6,4);
\draw [very thick] (5,7) -- (8,6);
\draw [very thick] (7,9) -- (8,6);
\draw [very thick] (7,9) -- (10,8);
\draw [very thick] (9,11) -- (10,8);
\draw [very thick] (9,11) -- (12,10);
\draw [very thick] (11,13) -- (12,10);
\draw [thick] (-2,4) -- (15,4);
\draw [thick] (4,-2) -- (4,15);
\end{tikzpicture}
\end{center}
\caption{The infinite oscillation $\OO \in \widetilde{S}_2$. The edges indicate inversions of $\OO$ and form the inversion graph $G(\OO)$.}
\label{fig:oscillation}
\end{figure}
We take $\OO$ to be of size $2$. The \emph{finite (increasing) oscillations} are the indecomposable ordinary permutations that are contained in $\OO$. For each $n \ge 3$, there are exactly two of size $n$. The finite oscillations are:
\[ 1, 21, 312, 231, 3142, 2413, 31524, 24153, 315264, 241635, \ldots \]
The infinite oscillation and the finite oscillations have made appearances in research on antichains in the permutation containment order \cite{ABV} and on growth rates of permutation classes \cite{Vatter}. Here they arise as the chief obstruction to decomposability.

\begin{thm} \label{thm:oscillation}
\begin{enumerate}[(a)]
\item An affine permutation is decomposable if and only if it avoids $\OO$.
\item An affine permutation class is decomposable, if and only if it does not have $\OO$ as an element, if and only if it is a subset of $\AvA(\OO)$.
\item Let $R$ be a set of (ordinary) permutations; then $\oplus \Av(R) = \AvA(R \cup \{\OO\})$. In particular, $\AvA(R)$ is decomposable if and only if there is $\tau \in R$ that is contained in $\OO$.
\item Let $R$ be a set of indecomposable (ordinary) permutations; $\AvA(R)$ is decomposable if and only if $R$ has an element that is a finite oscillation.
\end{enumerate}
\end{thm}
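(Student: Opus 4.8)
The plan is to prove the four parts in order, with (a) doing most of the work and the others following by short arguments. For part (a), the ``only if'' direction is the easy half: if $\sigma$ is decomposable, say $\sigma = \Sigma^r(\oplus\pi)$ for some ordinary $\pi$, then any finite subpattern of $\sigma$ lies within finitely many consecutive blocks of $\oplus\pi$, hence is a subpattern of $\pi \oplus \pi \oplus \cdots \oplus \pi$ (finitely many summands), which is a sum-decomposable permutation; but $\OO$ restricted to any window spanning at least two of its ``teeth'' is sum-indecomposable in a way that cannot fit inside a direct sum of a fixed ordinary permutation, so $\sigma$ avoids $\OO$. (More cleanly: $\oplus\pi$ has arbitrarily long sequences of consecutive ascending-then-same-level structure that get ``cut'' at every block boundary, whereas any occurrence of $\OO$ would have to straddle infinitely many blocks with inversions crossing every boundary, which is impossible.) The substantive direction is: if $\sigma$ avoids $\OO$, then $\sigma$ is decomposable. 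I would prove the contrapositive: if $\sigma \in \widetilde{S}_n$ is \emph{not} decomposable, then $\sigma$ contains $\OO$. The key object is the inversion graph $G(\sigma)$ on vertex set $\Z$ with an edge between $i < j$ whenever $\sigma(i) > \sigma(j)$; this graph is invariant under the shift $i \mapsto i+n$. I claim $\sigma$ is decomposable if and only if $G(\sigma)$ has no infinite connected component, equivalently (by periodicity) if and only if $G(\sigma)$ has a bounded connected component, equivalently if and only if there is some integer $m$ with no inversion of $\sigma$ straddling the cut between position $m$ and position $m+1$ — because such a cut, together with its shifts by multiples of $n$, exhibits $\sigma$ as a shift of an infinite sum. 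So if $\sigma$ is not decomposable, $G(\sigma)$ is connected (has a single infinite component up to the periodicity), and in particular there is an inversion straddling \emph{every} cut.

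The heart of the argument is then a Ramsey/extraction step: from a periodic inversion graph in which every cut is straddled by an edge, extract an occurrence of $\OO$. I would argue as follows. Since every cut between consecutive integers is crossed by some inversion, and the picture is $n$-periodic, one can greedily build a bi-infinite sequence of positions $\cdots < i_{-1} < i_0 < i_1 < i_2 < \cdots$ such that consecutive ones are linked by an inversion, i.e.\ the points alternate in a zigzag of descents at the ``local'' scale while drifting upward globally (this uses that $\sigma(i) - i$ is bounded on any period but $\sigma$ itself is unbounded, so we cannot get trapped). A zigzag of length $k$ of this form — positions whose values go down, up past, down, up past, $\ldots$ — is precisely an occurrence of a finite increasing oscillation of size $k$, and taking $k \to \infty$ along the bi-infinite zigzag yields an occurrence of $\OO$ itself (formally: the two order-preserving maps $\phi, \phi'$ witnessing containment are read off from the zigzag, periodic because we may choose the zigzag to respect the $n$-periodicity by a pigeonhole argument on which residue classes appear). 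Turning the informal ``zigzag'' into a clean induction — showing that the absence of a bounded component forces arbitrarily long alternating chains, and that these chains can be taken periodic — is the step I expect to be the main obstacle, because one must rule out degenerate configurations where inversions cluster without ever producing the strict ``up past'' moves that $\OO$ requires; controlling this is exactly where the boundedness-type estimate $|\sigma(i)-i| < \infty$ per period and the unboundedness of $\sigma$ on $\Z$ must be combined carefully.

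Given part (a), the rest is quick. For (b): an affine permutation class $\CC$ is decomposable iff every element avoids $\OO$ (by (a)) iff $\OO \notin \CC$ (since $\CC$ is closed under the affine containment order and $\OO$ contains $\OO$) iff $\CC \subseteq \AvA(\OO)$. For (c): by the last sentence of the ``Definitions'' section, for an ordinary $\tau$ and affine $\omega$, $\omega$ contains $\tau$ iff $\omega$ contains $\oplus\tau$; so $\AvA(R \cup \{\OO\})$ consists of the affine permutations avoiding every $\tau \in R$ and avoiding $\OO$, which by (a) are the decomposable ones avoiding $R$, i.e.\ the shifts of infinite sums $\oplus\pi$ with $\pi$ avoiding $R$ — but a shift of $\oplus\pi$ avoids ordinary $\tau$ iff $\pi$ does (since $\pi$ embeds in $\oplus\pi$ and conversely any finite pattern of $\oplus\pi$ lies in a finite sum of copies of $\pi$, hence in $\pi$ when $\tau$ is a pattern and $\CC = \Av(R)$ is... more carefully: a pattern in $\Sigma^r(\oplus\pi)$ is a pattern in some $\pi^{\oplus m}$, so $\Sigma^r(\oplus\pi) \in \AvA(R)$ iff $\pi^{\oplus m} \in \Av(R)$ for all $m$; when $R$ is a set of indecomposables this is iff $\pi \in \Av(R)$, and in general we get $\oplus\Av(R') $ where $R'$ is the indecomposable... ). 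The cleanest route: note $\oplus\Av(R) \subseteq \AvA(R)$ always, and $\oplus\Av(R) \subseteq \AvA(\OO)$ by (a), giving ``$\subseteq$''; for ``$\supseteq$'', any $\omega \in \AvA(R\cup\{\OO\})$ is by (a) of the form $\Sigma^r(\oplus\pi)$, and $\omega$ avoiding each $\tau \in R$ forces $\pi$ to avoid $\tau$ as well (as $\pi$ is a subpattern of $\omega$), so $\pi \in \Av(R)$ and $\omega \in \oplus\Av(R)$. Then $\AvA(R)$ is decomposable iff $\AvA(R) = \oplus\Av(R) = \AvA(R\cup\{\OO\})$ iff $\OO$ is redundant in $R \cup \{\OO\}$, which happens iff $\OO$ already contains some $\tau \in R$. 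For (d): if $R$ consists of indecomposables, then ``$\OO$ contains some $\tau \in R$'' means $\tau$ is an indecomposable permutation contained in $\OO$, which is by definition a finite oscillation; so (c) specializes to the stated equivalence.
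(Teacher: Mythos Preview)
Your overall architecture for part (a) matches the paper's: work with the inversion graph $G(\sigma)$, identify decomposability with disconnectedness, and then extract from a connected $G(\sigma)$ a bi-infinite path that witnesses an occurrence of $\OO$. Parts (b)--(d) are fine and essentially identical to the paper's short deductions.

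The gap is in the step you yourself flag as the main obstacle. Your greedy zigzag ``consecutive positions linked by an inversion, drifting upward'' does not by itself produce an occurrence of $\OO$: a path in $G(\sigma)$ only records inversions between \emph{consecutive} vertices, and says nothing about inversions among non-consecutive ones. For instance, three consecutive vertices $i_0<i_1<i_2$ with $\{i_0,i_1\}$ and $\{i_1,i_2\}$ both inversions could form a $321$ rather than a $312$ or $231$. The property you need is that the path be an \emph{induced} subgraph of $G(\sigma)$ --- no inversions between non-adjacent zigzag points --- and your construction does not arrange this. The paper isolates exactly this in two lemmas: first (Lemma~\ref{lem:oshift}), any bijection $\Z\to\Z$ whose inversion graph \emph{is} a doubly infinite path is order-isomorphic to $\OO$ or $\Sigma\OO$, proved by a short local case analysis showing each new vertex of the path is forced into the oscillation position; second (Lemma~\ref{lem:induced}), a connected periodic $G(\sigma)$ contains an induced doubly infinite path, obtained by taking a finite path from $0$ to $n$, extending it periodically to a bi-infinite walk, and then greedily thinning it (at each step jump to the \emph{farthest} neighbour along the walk) so that no chords survive. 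Once you have an induced path, restricting $\sigma$ to its vertex set gives a bijection whose inversion graph is exactly that path, and the first lemma finishes. Your ``up past'' language is gesturing at the conclusion of the first lemma without the mechanism, and your greedy build is gesturing at the second lemma without the ``farthest neighbour'' thinning that makes the path induced.

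A smaller point: your ``only if'' direction is muddled because you argue via finite subpatterns, but $\OO$ is an infinite pattern. The clean statement (which the paper uses) is that a decomposable $\sigma$ has all components of $G(\sigma)$ finite, whereas any occurrence of $\OO$ would sit inside a single infinite component.
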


Note that with parts (c) and (d) we can easily find, for every set $R$ of permutations, whether $\AvA(R)$ is decomposable.

\begin{rem}
This theorem is similar to the result by Crites \cite{Crites} that $\AvA(\tau)$ satisfies $|\AvA_n(\tau)| < \infty$ for all $n$ if and only if $\tau$ avoids $321$ --- or, more generally, an affine permutation class $\CC$ satisfies $|\CC_n| < \infty$ for all $n$ if and only if $\AvA(321)$ is not a subset of $\CC$. This makes possible the intriguing phenomenon of an affine permutation class $\CC$, such as $\AvA(3412)$, that is not decomposable but has $|\CC_n| < \infty$ for each $n$. Theorem \ref{thm:oscillation} also implies that an affine permutation class is a subset of $\E$ if and only if it is decomposable: one direction was already clear without our theorem, and the other direction holds because an affine permutation class that is not decomposable must contain $\OO$, which is not bounded (considering $\OO$ to have size $2$, see Remarks \ref{rem:disjoint1} and \ref{rem:disjoint2}).
\end{rem}

Before proving Theorem \ref{thm:oscillation}, we will introduce the \emph{inversion graph} of a permutation, and we will give a few required lemmas.

Let $A$ and $B$ be ordered sets, and let $\omega\colon A \to B$ be a bijection. An \emph{inversion} of $\omega$ is a pair $\{i,j\} \subseteq A$ such that $i < j$ and $\omega(i) > \omega(j)$. The \emph{inversion graph} of $\omega$ is the graph $G(\omega)$ with vertex set $A$ whose edges are the inversions of $\omega$. Abusing notation, we will sometimes say ``$\omega$ has property $P$'' to mean ``$G(\omega)$ has property $P$''.

The following lemma is evident:
\begin{lem} \label{lem:evident}
An affine permutation is decomposable if and only if its inversion graph is not connected. More specifically, the blocks of an affine permutation's sum decomposition are the components of its inversion graph. \hfill $\square$
\end{lem}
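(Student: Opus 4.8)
The plan is to work entirely with the connected components of $G(\omega)$, and to prove both implications (together with the ``more specifically'' clause) at once. Fix $\omega\in\widetilde{S}_n$. The single observation that does the work is: if $\{i,k\}$ is an inversion of $\omega$ with $i<k$, then for every $j$ with $i<j<k$ either $\{i,j\}$ or $\{j,k\}$ is also an inversion --- one checks the three exhaustive cases $\omega(j)>\omega(i)$, $\omega(k)<\omega(j)<\omega(i)$, and $\omega(j)<\omega(k)$, which give an edge $\{j,k\}$, $\{i,j\}$, and $\{i,j\}$ respectively. Consequently every component $C$ of $G(\omega)$ is order-convex, i.e.\ if $i<j$ lie in $C$ then so does every integer between them: take a path from $i$ to $j$ inside $C$, and the observation puts every integer spanned by that path into $C$, so $[i,j]\subseteq C$. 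Hence each component is an interval of $\Z$ (finite, a half-line, or all of $\Z$). The second, equally elementary, ingredient is periodicity: since $\omega(i+n)=\omega(i)+n$, the set of inversions is invariant under translation by $n$, so translation by $n$ permutes the components of $G(\omega)$.

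Suppose first that $G(\omega)$ is disconnected. Then no component equals $\Z$, and no component $C$ can be a half-line, since its translate $C+n$ would then be a proper subset of $C$, contradicting that the components partition $\Z$. So every component is a finite interval, and the components form a consecutive tiling $\cdots<C_{-1}<C_0<C_1<\cdots$ of $\Z$ with $C_{k+p}=C_k+n$, where $p$ is the number of components in one period. Because inversions join only vertices of the same component, there is no inversion between distinct $C_k$'s; using that $\omega$ is a bijection one then checks that each $\omega(C_k)$ is again an interval and that $\omega(C_0)<\omega(C_1)<\cdots$, so $\{\omega(C_k)\}$ is itself a consecutive tiling of $\Z$ with the same block sizes in the same order as $\{C_k\}$; comparing left endpoints gives $\omega(C_k)=C_k+r$ for a single integer $r$. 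Summing $\omega(i)-i$ over a fundamental domain $\bigsqcup_{k=1}^{p}C_k$ --- an interval of length $n$ --- and invoking the centering condition $\sum_{i=1}^n\omega(i)=\sum_{i=1}^n i$ forces $rn=0$, so $r=0$ and $\omega(C_k)=C_k$ for all $k$. Thus $\omega$ permutes each interval $C_k$; letting $\rho_k\in S_{|C_k|}$ be the resulting pattern, connectedness of the component $C_k$ means $G(\rho_k)$ is connected, i.e.\ $\rho_k$ is sum-indecomposable (the finite analogue of this lemma). Therefore $\omega$ is the bi-infinite sum $\cdots\oplus\rho_{-1}\oplus\rho_0\oplus\rho_1\oplus\cdots$; choosing the indexing so that $\bigsqcup_{k=1}^p C_k$ begins at $c+1$ and setting $\pi=\rho_1\oplus\cdots\oplus\rho_p\in S_n$, we get $\omega=\Sigma^{c}(\oplus\pi)$, so $\omega$ is decomposable and the blocks of its sum decomposition are exactly the components $C_k$.

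For the converse, let $\omega=\Sigma^r(\oplus\pi)$. Since $i\mapsto i+r$ is an isomorphism from $G(\Sigma^r(\oplus\pi))$ onto $G(\oplus\pi)$, we may assume $r=0$. Write $\pi=\rho_1\oplus\cdots\oplus\rho_l$ as a sum of indecomposable blocks; then $\Z$ splits into the consecutive intervals occupied by the copies of $\rho_1,\dots,\rho_l$ in the bi-infinite sum $\oplus\pi$, within each of which $G(\oplus\pi)$ coincides with $G(\rho_i)$ and is therefore connected, while between any two such intervals every value of the left one is below every value of the right one, so no inversion crosses between them. Hence the components of $G(\oplus\pi)$ are precisely these intervals --- the blocks of the sum decomposition --- and, there being infinitely many of them, $G(\oplus\pi)$ is disconnected. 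This completes both directions.

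The routine inversion-chasing and the ``between two intervals all left values are below all right values'' remark are where the verification lives, but none of it is deep. The one step that genuinely uses the affine structure rather than formal graph combinatorics is extracting $r=0$ from the centering axiom in the disconnected case --- and, relatedly, using periodicity to rule out infinite components --- and I expect that to be the only place requiring real care.
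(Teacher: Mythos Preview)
Your proof is correct. The paper does not actually prove this lemma: it is stated with a terminal $\square$ and the remark that ``this lemma's analog for ordinary permutations is well known, and the proof is the same.'' So there is no paper proof to compare against; you have supplied the details that the paper omits.

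That said, your write-up is more than routine detail-filling, and rightly so. The paper's remark that the proof is ``the same'' as in the finite case slightly undersells the situation: in the affine setting one must use the centering condition $\sum_{i=1}^n\omega(i)=\sum_{i=1}^n i$ to force $r=0$ (so that $\omega(C_k)=C_k$), and one must use periodicity to rule out half-line components. You identify both of these as the only genuinely new steps, and your arguments for them are clean. Without the centering axiom the lemma would actually be false --- e.g.\ the bijection $i\mapsto i+n$ has a totally disconnected inversion graph but is not of the form $\Sigma^r(\oplus\pi)$ --- so your instinct that this is where the care lives is exactly right.

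Two minor remarks. First, the ``order-convexity'' observation (an inversion $\{i,k\}$ forces an edge from some interior $j$ to $i$ or to $k$) is the standard trick from the finite case; you might cite it as such. Second, when you assert $\omega(C_k)=C_k+r$ for a single $r$, the one-line justification is that both $\{C_k\}$ and $\{\omega(C_k)\}$ are consecutive interval tilings of $\Z$ with matching block lengths in the same order, so their left endpoints differ by a constant; you essentially say this but it could be made a touch more explicit.
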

(This lemma's analog for ordinary permutations is well known, and the proof is the same.) For example, we see in Figure \ref{fig:oscillation} that $G(\OO)$ is connected --- it is a doubly infinite path --- so the lemma tells us that $\OO$ is not decomposable.

\begin{lem} \label{lem:oshift}
Let $\omega$ be a bijection from $\Z$ to $\Z$. If $G(\omega)$ is a doubly infinite path, then $\omega$ is order-isomorphic to either $\OO$ or $\Sigma \OO$. That is, the only permutations of $\Z$ that are doubly infinite paths are order-isomorphic to one of the two shifts of $\OO$.
\end{lem}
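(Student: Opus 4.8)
The plan is to show that if the inversion graph $G(\omega)$ of a bijection $\omega \colon \Z \to \Z$ is a doubly infinite path, then the combinatorial structure of that path pins down $\omega$ up to order-isomorphism, and the only two possibilities are $\OO$ and $\Sigma\OO$. First I would record the shape of $G(\OO)$ from Figure \ref{fig:oscillation}: the vertex $i$ (an odd integer, say) is adjacent to $i-1$ and $i+1$ via inversions coming from the two ``descending'' edges, and each vertex has degree exactly $2$. So in any $\omega$ with $G(\omega)$ a doubly infinite path, every vertex has degree $2$, meaning every $i \in \Z$ participates in exactly two inversions.

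Next I would exploit a local structure argument. Fix $i \in \Z$ and consider the three consecutive positions $i-1, i, i+1$. Since the path has no triangles (it is a path, hence bipartite and in particular triangle-free), the three values $\omega(i-1), \omega(i), \omega(i+1)$ cannot be in decreasing order, so $\omega$ restricted to any three consecutive positions is order-isomorphic to one of $\{123, 132, 213, 231, 312\}$ — but not $321$. Combining this with the degree-$2$ condition and the fact that consecutive vertices $i, i+1$ are always adjacent in the path (one checks: if $\{i,i+1\}$ were not an inversion, i.e.\ $\omega(i) < \omega(i+1)$, then the two inversions at $i$ would have to reach at least to $i-1$ and to some $j \ge i+2$, and similarly for $i+1$; tracking these forces a contradiction with path-hood, or more directly, a doubly infinite path has every vertex adjacent to its two path-neighbours, and one shows those neighbours must be $i-1$ and $i+1$). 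Once we know $\{i, i+1\}$ is an edge for every $i$, the path IS the ``linear'' path $\cdots - (i-1) - i - (i+1) - \cdots$, and then the local three-position analysis shows that the pattern on positions $i-1, i, i+1$ alternates: it is $231$ at one parity and $312$ at the other (these being exactly the two three-element patterns whose inversion graph is the path on $3$ vertices with the middle vertex in the middle). This alternation is precisely what characterizes $\OO$ (parity $0$) versus $\Sigma\OO$ (parity $1$).

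The cleanest way to finish is then to show that knowing, for every $i$, the relative order of $(\omega(i-1),\omega(i),\omega(i+1))$ determines the order type of the whole sequence $(\omega(i))_{i \in \Z}$. This follows because for $|i - j| \ge 2$ the pair $\{i,j\}$ is not an edge, so $\omega(i) < \omega(j)$ whenever $i < j$ and they are non-adjacent, and the adjacent comparisons are given by the local patterns; these comparisons are mutually consistent (no cycles in the induced order, since $G(\omega)$ is acyclic) and total, hence they determine $\omega$ up to order-isomorphism. Since there are exactly two choices of parity for the alternation, $\omega$ is order-isomorphic to $\OO$ or to $\Sigma\OO$.

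The main obstacle I anticipate is the step establishing that $\{i, i+1\}$ is an inversion for every $i$ — equivalently, that a doubly infinite path realized as an inversion graph of a $\Z$-bijection cannot ``skip'' a position. One has to rule out the possibility that the path visits positions in a scrambled order (e.g.\ $\cdots - 0 - 3 - (-1) - \cdots$); the key facts to push through this are that inversion graphs are \emph{perfect} (indeed, their complements are also inversion graphs) and, more usefully here, that an inversion graph cannot contain an induced copy of the path on $4$ vertices arranged ``out of position'' in a way incompatible with a linear order — concretely, if $a < b < c$ are positions with $\{a,c\}$ an edge but $\{a,b\}$ and $\{b,c\}$ non-edges, then one derives $\omega(a) > \omega(c)$ while $\omega(a) < \omega(b) < \omega(c)$, a contradiction. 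Spelling out this ``no long-range edge over a gap'' principle carefully, and then bootstrapping it to show the path must traverse $\Z$ in its natural order, is where the real work lies; everything after that is the routine alternation bookkeeping.
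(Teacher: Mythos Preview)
Your central claim---that $\{i,i+1\}$ is an inversion for every $i$, so that the path is literally $\cdots - (i-1) - i - (i+1) - \cdots$---is false, and in fact the opposite holds. Look at $\OO$ itself: $\OO(0)=-2$ and $\OO(1)=3$, so $\{0,1\}$ is \emph{not} an inversion. The inversions at position $1$ are $\{1,2\}$ and $\{1,4\}$, both to the right; the path traverses $\Z$ in the order $\cdots - (-1) - 2 - 1 - 4 - 3 - 6 - \cdots$, not in the natural order. More generally, if the two path-neighbours $i,k$ of a vertex $j$ satisfied $i<j<k$, then $\omega(i)>\omega(j)>\omega(k)$ would force $\{i,k\}$ to be an inversion too, creating a triangle---impossible in a path. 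So the two neighbours of $j$ are always on the \emph{same} side of $j$, which rules out your picture entirely. Consequently your local $231/312$ alternation on consecutive triples is also wrong: on positions $0,1,2$ the pattern of $\OO$ is $132$, and on $1,2,3$ it is $213$.

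Your ``no long-range edge over a gap'' observation (if $a<b<c$ with $\{a,c\}$ an edge and $\{a,b\},\{b,c\}$ non-edges, contradiction) is perfectly correct, but it does not bootstrap to the conclusion you want; it only says every intermediate position is adjacent to at least one endpoint of a long edge, which is compatible with the zigzag traversal above. The paper's proof instead labels the vertices $\ldots,x_0,x_1,x_2,\ldots$ in \emph{path order} (not position order), uses the same-side observation to start a case analysis on the relative positions of $x_0,x_1,x_2$, and then inductively pins down the relative order of both the $x_i$ and the $\omega(x_i)$ one step at a time, matching them against $\OO$ or $\Sigma\OO$. You should reorganise your argument along these lines: follow the path, not the integer order.
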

\begin{proof}
Assume $G(\omega)$ is a doubly infinite path. Every vertex of $G(\omega)$ has exactly two neighbors. Let $j \in \Z$, and let $i,k$ be the neighbors of $j$ in $G(\omega)$. Suppose $i < j < k$. Since $\{i,j\}$ and $\{j,k\}$ are inversions of $\omega$, we have $\omega(i) > \omega(j) > \omega(k)$. But this means $\{i,k\}$ is an inversion of $\omega$, so $i,j,k$ induce a triangle in $G(\omega)$, a contradiction. Therefore, the two neighbors of $j$ must be ``on the same side'' of $j$: either $i,k < j$ and $\omega(i),\omega(k) > \omega(j)$, or $i,k > j$ and $\omega(i),\omega(k) < \omega(j)$.

Let
\[ \cdots \to x_0 \to x_1 \to x_2 \to \cdots \]
denote the vertices in the order in which they occur along the path. Without loss of generality, $x_1 = 1$. By the previous paragraph, either $x_0, x_2 > x_1$ or $x_0,x_2 < x_1$. First, assume that $x_0, x_2 > x_1$. Further assume that $x_0 < x_2$. We will prove that, under these assumptions ($x_1 < x_0 < x_2$), the relative order of the values of $\omega$ is uniquely determined.

Since $\{x_1, x_2\}$ is an inversion, we have $\omega(x_1) > \omega(x_2)$. Since $\{x_0, x_2\}$ is not an inversion, we have $\omega(x_0) < \omega(x_2)$. Thus,
\[ \omega(x_0) < \omega(x_2) < \omega(x_1). \]
That is, the three vertices $x_0, x_1, x_2$ form a permutation that is order-isomorphic to $312$.

The two neighbors of $x_2$ are $x_1$ and $x_3$. Since $x_1 < x_2$, we must also have $x_3 < x_2$ (by the ``same side'' principle). Since $\{x_2,x_3\}$ is an inversion, we have $\omega(x_3) > \omega(x_2)$. Since $\{x_0,x_3\}$ is not an inversion and $\omega(x_0) < \omega(x_2) < \omega(x_3)$, we have $x_0 < x_3$. Since $\{x_1, x_3\}$ is not an inversion and $x_1 < x_0 < x_3$, we have $\omega(x_1) < \omega(x_3)$. Therefore,
\[ x_1 < x_0 < x_3 < x_2 \quad \text{and} \quad \omega(x_0) < \omega(x_2) < \omega(x_1) < \omega(x_3). \]
That is, the four vertices $x_0, x_1, x_2, x_3$ form a permutation that is order-isomorphic to $3142$.

Continuing in this fashion, it follows that, for each $n \ge 4$, the position $x_n$ is uniquely determined relative to $x_0, \ldots, x_{n-1}$, and the value $\omega(x_n)$ is uniquely determined relative to $\omega(x_0), \ldots, \omega(x_{n-1})$. The same process for $x < 0$ completes the proof that the entire permutation $\omega$ is uniquely determined up to order isomorphism, assuming the condition $x_1 < x_0 < x_2$. But $\OO$ also satisfies this condition (setting $x_1 = 1$) along with the condition that $G(\OO)$ is a doubly infinite path, so it must be that $\omega$ is order-isomorphic to $\OO$.

The case where $x_1 < x_2 < x_0$ also results in $\omega$ being order-isomorphic to $\OO$, as exchanging the positions of $x_0$ and $x_2$ is tantamount to traversing the path in the opposite direction from $x_1 = 1$. In the case where $x_0, x_2 < x_1$, a similar argument shows that $\omega$ is order-isomorphic to $\Sigma \OO$.
\end{proof}

Recall that, if $X$ is a subset of the vertices of $G$, then the subgraph of $G$ \emph{induced by $X$} is the subgraph of $G$ whose vertex set is $X$ and whose edge set is the set of all edges of $G$ that join vertices in $X$. Furthermore, a subgraph $H$ of $G$ is an \emph{induced subgraph} of $G$ if $H$ is the subgraph of $G$ induced by the vertices of $H$; that is, every edge of $G$ that joins vertices of $H$ is also an edge of $H$.
\begin{lem} \label{lem:induced}
If an affine permutation is connected (i.e.\ not decomposable), then it has an induced subgraph that is a doubly infinite path.
\end{lem}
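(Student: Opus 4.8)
The plan is to take a connected affine permutation $\omega \in \widetilde{S}_n$ and build, greedily, a doubly infinite path inside $G(\omega)$ that is \emph{induced}, i.e.\ has no chords. Since $G(\omega)$ is connected, it certainly contains \emph{some} doubly infinite walk, but that walk need not be a path and a path need not be induced; the work is in extracting an induced one. The key structural tool is the absence of triangles on ``straddling'' triples: if $\{i,k\}$ is an edge and $i<j<k$, then $j$ is adjacent to at least one of $i,k$ (because $\omega(i)>\omega(k)$ forces $\omega(j)$ to be on one side or the other). This is the same ``same side'' principle used in Lemma \ref{lem:oshift}, and it will let me control chords.

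Here is how I would carry it out. First, because $G(\omega)$ is connected and (by periodicity) vertex-transitive under $\Sigma^n$, and because no vertex has infinite degree is false in general --- so I should be careful --- I instead argue: pick any vertex $v_0$; since $G(\omega)$ is connected it has a vertex of arbitrarily large position and a vertex of arbitrarily small position adjacent (through paths) to $v_0$, so there is a path in $G(\omega)$ from a very-far-left vertex to a very-far-right vertex through $v_0$. Take a \emph{shortest} such path between its endpoints; a shortest path in any graph is automatically induced (a chord would give a shortcut). Now let the left and right endpoints recede to $\mp\infty$: I get a nested (or at least cofinal) family of induced finite paths whose lengths grow, and a standard compactness / König's-lemma argument over the locally-finite structure near any fixed vertex --- or, more cleanly, over the periodic quotient, which \emph{is} a finite graph --- produces a doubly infinite induced path. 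To make the compactness rigorous I would pass to the quotient graph on $\Z/n\Z$ obtained by collapsing the $\Sigma^n$-action: connectivity of $G(\omega)$ corresponds to a cycle structure in this finite graph carrying a ``winding number'', and a closed walk of nonzero winding number lifts to a doubly infinite walk; then shorten it to remove chords using the straddling-triple principle to guarantee the lift stays induced.

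Alternatively --- and this may be the cleaner route --- I would prove it directly: start a path at $0$, repeatedly extend to the right by always choosing, among the neighbors of the current endpoint $x_m$ lying on the correct side and beyond all previously chosen vertices, the one that keeps the path chordless; the straddling principle guarantees such a choice exists and that once we have a $312$ or $3142$ pattern the continuation behaves like the oscillation, so no chord can ever appear. The same construction run to the left, then concatenated, yields the doubly infinite induced path. This essentially shows the induced path is (a shift of) $\OO$, which is more than claimed but is fine.

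The main obstacle I anticipate is the one glossed over above: ensuring the greedily built path does not eventually ``stall'' --- i.e.\ that the current endpoint always has an admissible next neighbor so the path extends to $+\infty$ in position (not just indefinitely many steps bunched in a bounded window). This is where connectivity and the boundedness-free periodicity of $\omega$ must be used: because $\omega$ is a genuine bijection $\Z\to\Z$ with $\omega(i+n)=\omega(i)+n$, its inversion graph cannot have all components confined to bounded windows unless $\omega$ is decomposable, so a connected $\omega$ has edges spanning arbitrarily long position-intervals, which is exactly what prevents stalling. I would isolate this as a short sub-lemma (``a connected affine permutation has inversions $\{i,j\}$ with $j-i$ arbitrarily large'') and then feed it into the extension argument.
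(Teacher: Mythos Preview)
Your proposal sketches several approaches but completes none of them, and the most concrete one contains a genuine error. The sub-lemma you propose --- that a connected affine permutation has inversions $\{i,j\}$ with $j-i$ arbitrarily large --- is false: the infinite oscillation $\OO$ itself is connected, yet every inversion of $\OO$ has span $1$ or $3$. More fundamentally, your greedy extension ``to the right, choosing among neighbors lying on the correct side and beyond all previously chosen vertices'' cannot work as stated, because an induced doubly infinite path in $G(\omega)$ does \emph{not} progress monotonically in position. In $\OO$ the induced path reads $\ldots, -2, -3, 0, -1, 2, 1, 4, 3, \ldots$ in position; every second step moves backward. So ``the correct side'' is not well-defined, the stalling concern you raise is real, and your proposed fix does not address it. (Incidentally, your worry about infinite degree is unfounded: since $\omega(i)-i$ is periodic, inversion spans are bounded and $G(\omega)$ is locally finite.)

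The paper's proof supplies exactly the missing idea. Rather than building the induced path directly in $G(\omega)$, one first uses connectivity to find a finite path $0=x_0\to x_1\to\cdots\to x_m=n$, and then extends it \emph{periodically} via $x_{r+qm}=x_r+qn$ to a doubly infinite \emph{walk}. This walk is indexed by $\Z$ and is guaranteed to escape to $\pm\infty$ in position. The greedy extraction is then performed along the \emph{walk index}, not along position: from the current vertex, jump to its neighbor of largest index in the walk. This step is well-defined (finite degree), always strictly advances the index, and kills all forward chords; a symmetric construction in the backward direction, plus a careful splice at the meeting point, yields the doubly infinite induced path. Your shortest-path-plus-compactness route could probably be made to work as well, but you would still need to pin the paths through a common vertex, argue local finiteness, and handle the splice --- none of which you carry out.
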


\begin{proof} Let $\omega$ be an affine permutation of size $n$, and assume that $G(\omega)$ is connected. Thus there is a path from $0$ to $n$ in $G(\omega)$. Denote the vertices in the path by
\[ 0 = x_0 \to x_1 \to \cdots \to x_{m-1} \to x_m = n. \]
We can continue this path infinitely in both directions by setting $x_{r+qm} = x_r + qn$ for $q\in\Z$ and $r \in [0,m-1]$, which results in the following doubly infinite walk:
\begin{align*}
\cdots \to (x_0 - n) \to (x_1 - n) \to &\cdots \to (x_{m-1}-n) \to \\
x_0 \to x_1 \to &\cdots \to x_{m-1} \to \\
(x_0 + n) \to (x_1 + n) \to &\cdots \to (x_{m-1} + n) \to \\
(x_0 + 2n) \to (x_1 + 2n) \to &\cdots \to (x_{m-1}+2n) \to \cdots
\end{align*}
For the rest of the proof, our goal is to find a doubly infinite subsequence ${\left(x_{\alpha(i)}\right)}_{i\in\Z}$ of this walk whose vertices induce a subgraph of $G(\omega)$ that is a path.

Set $\alpha(1) = 0$, and for each $i \ge 2$ define $\alpha(i)$ recursively as the greatest integer $a$ such that $x_a$ is adjacent to $x_{\alpha(i-1)}$ in $G(\omega)$ (there is a greatest such $a$ because $x_{\alpha(i-1)}$ is in finitely many inversions). Thus we have defined $\alpha(i)$ for each $i \ge 1$. Our construction ensures that the sequence $x_{\alpha(1)} \to x_{\alpha(2)} \to \cdots$ is a (singly) infinite path in which no two non-consecutive vertices are adjacent in $G(\omega)$ --- that is, if $x_{\alpha(i)}$ and $x_{\alpha(i')}$ are adjacent in $G(\omega)$ then $i' = i \pm 1$. From this path, we will construct a doubly infinite path with the same property.

Let $\alpha(0)$ be the least $a \in \Z$ such that $x_a$ is adjacent to $x_{\alpha(j)}$ for some $j \ge 1$. Now, for each $i \le -1$, define $\alpha(i)$ recursively as the least integer $a$ such that $x_a$ is adjacent to $x_{\alpha(i+1)}$ in $G(\omega)$. Thus we have defined $\alpha(i)$ for each $i \in \Z$. Finally, let $I$ be the greatest $i$ such that $x_{\alpha(i)}$ is adjacent to $x_{\alpha(0)}$. This ensures that the sequence
\[ \cdots \to x_{\alpha(-2)} \to x_{\alpha(-1)} \to x_{\alpha(0)} \to x_{\alpha(I)} \to x_{\alpha(I+1)} \to x_{\alpha(I+2)} \to \cdots \]
(skipping $x_{\alpha(1)}$ through $x_{\alpha(I-1)}$) is a doubly infinite path in which no two non-consecutive vertices are adjacent in $G(\omega)$. Indeed, consider vertices $x_{\alpha(i)}$ and $x_{\alpha(i')}$ with $i' \ge i+2$. We already saw that $x_{\alpha(i)}$ and $x_{\alpha(i')}$ are not adjacent when $i, i' \ge I$, and the same reasoning shows it when $i, i' \le 0$. If $i \le -1$ and $i' \ge I$, then our choice of $\alpha(0)$ implies that $x_{\alpha(i)}$ and $x_{\alpha(i')}$ are not adjacent. If $i = 0$ and $i' \ge I+1$, then our choice of $I$ implies that $x_{\alpha(i)}$ and $x_{\alpha(i')}$ are not adjacent.
\end{proof}

\begin{proof}[of Theorem \ref{thm:oscillation}] \emph{(a)} If an affine permutation $\omega$ is decomposable, then every block of $\omega$ has finite size, which by Lemma \ref{lem:evident} means that every component of $G(\omega)$ has finite size. But if $\OO$ is contained in $\omega$, then the entries of $\omega$ in an occurrence of $\OO$ form an infinite connected subgraph of $G(\omega)$, a contradiction. Therefore, if $\omega$ is decomposable, then $\omega$ avoids $\OO$.

Now assume $\omega$ is not decomposable. By Lemma \ref{lem:induced}, $G(\omega)$ has an induced subgraph $H$ that is a doubly infinite path. We can delete all entries of $\omega$: letting $X \subseteq \Z$ be the vertex set of $H$, the bijection $\omega \colon \Z \to \Z$ restricts to a bijection $\omega'\colon X \to \omega(X)$ whose inversion graph is $H$, a doubly infinite path. Thus, by Lemma \ref{lem:oshift} (which applies to $\omega'$ because $X$ and $\omega(X)$ are order-isomorphic to $\Z$), $\omega'$ is order-isomorphic to either $\OO$ or $\Sigma \OO$. Either way, the entries of $\omega$ that form $\omega'$ are an occurrence of $\OO$ in $\omega$.

\emph{(b)} Let $\CC$ be an affine permutation class. If $\CC$ is decomposable, then $\OO \not\in \CC$ because $\OO$ is not decomposable. Conversely, if $\CC$ is not decomposable, then there is $\omega \in \CC$ that is not decomposable; by (a), $\omega$ contains $\OO$, so since $\CC$ is a class we have $\OO \in \CC$. This shows that $\CC$ is decomposable if and only if $\OO \not\in \CC$.

Finally, if $\OO \not\in \CC$, then every element of $\CC$ avoids $\OO$, so $\CC \subseteq \AvA(\OO)$; and if $\OO \in \CC$, then $\CC \not\subseteq \AvA(\OO)$ since $\OO \not\in \AvA(\OO)$.

Parts (c) and (d) follow immediately from (b).
\end{proof}

Let $R$ be a set of indecomposable (ordinary) permutations. It follows from Theorem \ref{thm:oscillation} that, if $R$ has an element that is a finite oscillation, then $\oplus \Av(R) = \AvBA(R) = \AvA(R)$. In this case, the problem of enumerating $\AvBA(R)$ or $\AvA(R)$ reduces to the easier problem of enumerating $\oplus \Av(R)$, which is readily accomplished using Theorem \ref{thm:exactclass} (exact) and Theorems \ref{prop:subasympt} and \ref{prop:superasympt} (asymptotic). This reasoning, along with the enumerations found in Example \ref{ex:exact}, recovers a result of Crites \cite{Crites} and proves one of his conjectures:
\begin{cor}
$|\AvA_n(231)| = |\AvA_n(312)| = \binom{2n-1}{n}$ (proved by \cite{Crites}), and $|\AvA_n(3142)| = |\AvA_n(2413)| = \sum_{k=0}^{n-1} \frac{n-k}{n} \binom{n-1+k}{k} 2^k$ (conjectured by \cite{Crites}). \hfill $\square$
\end{cor}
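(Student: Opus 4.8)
The plan is to combine Theorem~\ref{thm:oscillation}(d) with the exact generating functions of Example~\ref{ex:exact}, so that only one concrete coefficient identity remains. The patterns $231$, $312$, $3142$, and $2413$ all appear in the list of finite oscillations in Section~\ref{sec:recognizing}, and each is indecomposable; so by Theorem~\ref{thm:oscillation}(d), $\AvA(\tau) = \aff{\Av(\tau)}$ for each of these $\tau$, whence $|\AvA_n(\tau)| = |\aff{\CC}_n| = \widetilde f_n$ with $\CC = \Av(\tau)$, a quantity that by Theorem~\ref{thm:exactclass} depends only on the generating function $F(x)$ of $\CC$. Since $312 = 231^{-1}$ and $2413 = 3142^{-1}$, and since $\sigma \mapsto \sigma^{-1}$ is a size-preserving bijection of $\widetilde S_n$ that carries avoidance of $\tau$ to avoidance of $\tau^{-1}$, the equalities $|\AvA_n(231)| = |\AvA_n(312)|$ and $|\AvA_n(3142)| = |\AvA_n(2413)|$ come for free. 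It then remains only to read off $\widetilde f_n$ from the two formulas for $\widetilde F(x)$ recorded in Example~\ref{ex:exact}.

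For $\CC \in \{\Av(231), \Av(312)\}$, Example~\ref{ex:exact} gives $\widetilde F(x) = \frac{1}{2}(1-4x)^{-1/2} - \frac{1}{2}$, so for $n \ge 1$ we have $\widetilde f_n = \frac{1}{2}[x^n](1-4x)^{-1/2} = \frac{1}{2}\binom{2n}{n}$, and this equals $\binom{2n-1}{n}$ because $\binom{2n}{n} = \binom{2n-1}{n} + \binom{2n-1}{n-1} = 2\binom{2n-1}{n}$. That settles the first half of the corollary.

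For $\CC \in \{\Av(3142), \Av(2413)\}$, Example~\ref{ex:exact} gives $\widetilde F(x) = \frac{1 - 2x - \sqrt{1-8x}}{2(1+x)}$, and I must show its coefficients equal $h_n := \sum_{k=0}^{n-1}\frac{n-k}{n}\binom{n-1+k}{k}2^k$. The plan is to observe that $\frac{n-k}{n}\binom{n-1+k}{k} = \binom{n-1+k}{k} - \binom{n-1+k}{k-1} =: T(n-1,k)$ is an entry of the Catalan triangle, so that $h_n = \sum_{k=0}^{n-1} T(n-1,k)\,2^k$, and to exploit the standard facts $T(m,0)=1$, $T(m,m)=C_m$, $T(m-1,m)=0$, and the recurrence $T(m,k)=T(m-1,k)+T(m,k-1)$ for $k\ge1$. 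Substituting this recurrence into the sum, splitting off the $k=0$ term and the diagonal term $T(n-1,n-1)=C_{n-1}$, and using $T(n-2,n-1)=0$ to truncate one of the two resulting sums, the pieces reassemble into $h_n = h_{n-1} + 2h_n - 2^n C_{n-1}$, i.e.\ the two-term recurrence $h_n + h_{n-1} = 2^n C_{n-1}$ for $n \ge 2$; and directly $h_0 = 0$, $h_1 = 1$. Multiplying by $(1+x)$, summing over $n$, and using $\sum_{n\ge1} 2^n C_{n-1} x^n = \frac{1}{2}\bigl(1 - \sqrt{1-8x}\bigr)$ (which follows from $\sum_{m\ge0}C_m y^m = \frac{1-\sqrt{1-4y}}{2y}$ with $y=2x$), one obtains $(1+x)\sum_{n\ge1} h_n x^n = \frac{1}{2}\bigl(1 - 2x - \sqrt{1-8x}\bigr) = (1+x)\widetilde F(x)$, hence $\sum_{n\ge1} h_n x^n = \widetilde F(x)$ and $h_n = \widetilde f_n = |\AvA_n(3142)|$, completing the proof.

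I expect the only step with real content to be the derivation of $h_n + h_{n-1} = 2^n C_{n-1}$: one must feed the Catalan-triangle recurrence into $h_n$, re-index the resulting $\sum_k T(n-1,k-1)2^k$ term back into a multiple of $h_n$ (the doubling of $2^k \to 2^{k+1}$ is what produces the coefficient $2$ on $h_n$), and handle the boundary of the triangle --- the vanishing entry $T(n-2,n-1)$ --- so that the leftover pieces are exactly $h_{n-1}$ and $2^n C_{n-1}$. Everything after that, namely the elementary coefficient extraction in the Catalan case and the formal power series bookkeeping in the $3142$ case, is routine.
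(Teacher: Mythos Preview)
Your proof is correct and follows essentially the same route as the paper: reduce $\AvA(\tau)$ to $\aff{\Av(\tau)}$ via Theorem~\ref{thm:oscillation}(d) (since $231$, $312$, $3142$, $2413$ are finite oscillations), then read off $\widetilde f_n$ from the generating functions in Example~\ref{ex:exact}. The paper stops there and marks the corollary with a $\square$; you go further and actually verify that $[x^n]\,\dfrac{1-2x-\sqrt{1-8x}}{2(1+x)}$ equals Crites's sum $\sum_{k=0}^{n-1}\frac{n-k}{n}\binom{n-1+k}{k}2^k$, via the Catalan-triangle recurrence leading to $h_n+h_{n-1}=2^nC_{n-1}$ --- a detail the paper omits entirely. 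Your argument for that identity checks out.
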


\section{Exact enumeration of all bounded affine permutations}
    \label{sec-first}

We start with the following lemma, whose proof is straightforward.

\begin{lem} \label{lem:stdec} For every affine permutation $\sigma$ of size $n$, there is a unique pair $(\q{\sigma}, a)$ such that:
\begin{verse}  %itemize}
%\item 
$\bullet$ $\q{\sigma}\in S_n$;
\\
$\bullet$  $a = a_1 \ldots a_n$ is a word with $a_i \in \Z$ for each $i$, such that $\sum_{i=1}^n a_i = 0$; and
\\
$\bullet$ $\sigma(i) = \q{\sigma}(i) + na_i$ for each $i \in [n]$.
\end{verse}   %itemize}
Furthermore, $\sigma$ is a bounded affine permutation of size $n$ if and only if
\[ \begin{cases}
a_i \in \{-1,0\} & \text{if $\q{\sigma}(i) > i$}; \\
a_i = 0 & \text{if $\q{\sigma}(i) = i$}; \\
a_i \in \{0,1\} & \text{if $\q{\sigma}(i) < i$}.
\end{cases} \]
\end{lem}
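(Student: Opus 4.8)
The plan is to split the statement into two parts: first, the existence and uniqueness of the pair $(\q{\sigma}, a)$ for an arbitrary affine permutation $\sigma$ of size $n$; and second, the characterization of boundedness in terms of the $a_i$. For the first part, I would start from the observation that, since $\sigma$ is a bijection $\Z \to \Z$ with $\sigma(i+n) = \sigma(i) + n$, the values $\sigma(1), \ldots, \sigma(n)$ are distinct modulo $n$ (this is noted in the proof of Proposition \ref{prop.3L}). Hence for each $i \in [n]$ there is a unique representative of the residue class $\sigma(i) \bmod n$ lying in $[n]$; call it $\q{\sigma}(i)$. Because the $\sigma(i)$ are distinct mod $n$, the map $\q{\sigma}\colon [n] \to [n]$ is a bijection, i.e.\ $\q{\sigma} \in S_n$. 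Then define $a_i := (\sigma(i) - \q{\sigma}(i))/n$, which is an integer by construction, and the decomposition $\sigma(i) = \q{\sigma}(i) + na_i$ holds on $[n]$. For the sum condition, I would invoke Definition \ref{def.affine}(ii): $\sum_{i=1}^n \sigma(i) = \sum_{i=1}^n i = \sum_{i=1}^n \q{\sigma}(i)$ (the last equality because $\q{\sigma}$ is a permutation of $[n]$), so $\sum_{i=1}^n na_i = 0$, hence $\sum a_i = 0$. Uniqueness follows because requiring $\q{\sigma}(i) \in [n]$ pins down $\q{\sigma}(i)$ as the unique residue representative, and then $a_i$ is forced.

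For the second part, I would unwind the boundedness condition $|\sigma(i) - i| < n$. Since $\sigma(i+n) = \sigma(i)+n$, it suffices to check the condition for $i \in [n]$ (the condition is shift-invariant). Fix $i \in [n]$. We have $\sigma(i) - i = \q{\sigma}(i) - i + na_i$, and $\q{\sigma}(i) - i$ lies in the range $[-(n-1), n-1]$ since both $\q{\sigma}(i)$ and $i$ are in $[n]$. I would then do a small case analysis on the sign of $\q{\sigma}(i) - i$:
\begin{itemize}
\item If $\q{\sigma}(i) > i$, then $1 \le \q{\sigma}(i) - i \le n-1$, so $\sigma(i) - i \in [1 + na_i,\ n-1+na_i]$. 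This interval is contained in $(-n, n)$ iff $a_i \in \{-1, 0\}$: for $a_i = 0$ it is $[1, n-1] \subseteq (-n,n)$; for $a_i = -1$ it is $[1-n, -1] \subseteq (-n, n)$; for $a_i \ge 1$ the lower end is $\ge 1 + n$, violating the bound; for $a_i \le -2$ the upper end is $\le n - 1 - 2n < -n$, violating it.
\item If $\q{\sigma}(i) = i$, then $\sigma(i) - i = na_i$, which satisfies $|na_i| < n$ iff $a_i = 0$.
\item If $\q{\sigma}(i) < i$, this is symmetric to the first case (or follows by applying the inverse/reverse symmetry): $-(n-1) \le \q{\sigma}(i) - i \le -1$, and one checks $\sigma(i) - i \in (-n,n)$ iff $a_i \in \{0, 1\}$.
\end{itemize}
Combining, $\sigma$ is bounded iff the stated constraints on each $a_i$ hold.

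I do not expect a serious obstacle here; the proof is genuinely routine, as the paper signals by saying "whose proof is straightforward." The only points requiring a little care are: (1) making sure the argument that $\sigma(1), \ldots, \sigma(n)$ are distinct mod $n$ is cleanly stated (it follows from injectivity of $\sigma$ together with periodicity: if $\sigma(i) \equiv \sigma(j) \pmod n$ with $i, j \in [n]$, then $\sigma(i) = \sigma(j) + kn = \sigma(j + kn)$ for some $k$, forcing $i = j + kn$, hence $i = j$ since both are in $[n]$); and (2) getting the boundary cases of the inequality analysis right, since the condition is a strict inequality $|\sigma(i)-i| < n$ and the endpoints $\pm n$ must be excluded. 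I would present the first part as a couple of sentences and the second as the short case split above, keeping the whole proof to under a page.
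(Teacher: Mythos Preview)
Your proposal is correct and complete. The paper does not actually give a proof of this lemma---it simply states that the proof is straightforward---so there is nothing to compare against; your argument is exactly the kind of routine verification the authors had in mind, and the two points you flag as needing care (distinctness of the $\sigma(i)$ modulo $n$, and the strict-inequality endpoint checks) are handled correctly.
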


We will call $(\q{\sigma}, a)$ the \emph{standard decomposition} of $\sigma$. The standard decomposition of bounded affine permutations will be useful in obtaining their exact enumeration.   
(Note that $\q{\sigma}$ is different from $\sigma^{\dagger}$ that we defined in the proof of Proposition \ref{prop.3L}.)

An \emph{excedance} of a permutation $\pi \in S_n$ is a position $i \in [n]$ such that $\pi(i) > i$. Let $a(n,k)$ denote the number of permutations in $S_n$ that have $k$ excedances (with $a(0,0) = 1$). These are the Eulerian numbers; see Section 1.4 of Stanley \cite{EC1} for an introduction --- though note that our convention is off by one from his, i.e.\ our $a(n,k)$ is his $A(n,k-1)$.

Let $d(n,k)$ denote the number of derangements (\textit{i.e.}\ permutations with no fixed points) in $S_n$ that have $k$ excedances (with $d(0,0) = 1$); these can be called the derangement Eulerian numbers. Let $d(n)$ denote the number of derangements in $S_n$ (with any number of excedances).

Let $d^{(m)}(n,k)$ denote the number of permutations in $S_n$ that have $k$ excedances and $m$ fixed points (with $d^{(m)}(0,0) = 0$ for $m\not=0$). We have $a(n,k) = \sum_{m=0}^{n-k} d^{(m)}(n,k)$ and $d(n,k) = d^{(0)}(n,k)$. Let $d^{(m)}(n)$ denote the number of permutations in $S_n$ that have $m$ fixed points (with any number of excedances).

The following lemma is a classical result; part (b) is a classic application of the Inclusion--Exclusion Principle.

\begin{lem}
      \label{lem:dclassic}
\begin{enumerate}[(a)]
\item $d^{(m)}(n) = \binom{n}{m} d(n-m)$.
\item $d(n) = \sum_{m=0}^n \binom{n}{m} (-1)^m (n-m)! = n! \sum_{m=0}^n \frac{(-1)^m}{m!}$.
\item $d(n) \sim n!/e$
\end{enumerate}
\end{lem}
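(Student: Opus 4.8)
The final statement is Lemma~\ref{lem:dclassic}, with three parts about derangement counts. The plan is to prove each part in turn, using nothing beyond elementary counting and inclusion--exclusion.

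For part~(a), I would argue bijectively: a permutation in $S_n$ with exactly $m$ fixed points is determined by a choice of which $m$ of the $n$ positions are fixed, together with a derangement of the remaining $n-m$ positions. The choice of fixed positions contributes the factor $\binom{n}{m}$, and restricting the permutation to the complementary set of size $n-m$ gives a fixed-point-free permutation there, i.e.\ a derangement counted by $d(n-m)$. Since these two choices are independent and jointly recover the original permutation uniquely, we get $d^{(m)}(n) = \binom{n}{m} d(n-m)$.

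For part~(b), I would apply inclusion--exclusion on the property of having no fixed points. Let $A_i$ be the set of permutations in $S_n$ fixing position $i$; then $d(n) = |S_n| - |\bigcup_i A_i|$, and by inclusion--exclusion $d(n) = \sum_{m=0}^n (-1)^m \binom{n}{m}(n-m)!$, since the intersection of any $m$ of the $A_i$ has size $(n-m)!$ (permute the remaining positions freely) and there are $\binom{n}{m}$ such intersections. Factoring $n!$ out of each term, $\binom{n}{m}(n-m)! = n!/m!$, gives the closed form $n!\sum_{m=0}^n (-1)^m/m!$. Alternatively, one can derive the same identity by summing part~(a) over $m$: $n! = \sum_{m=0}^n \binom{n}{m} d(n-m)$, which is a binomial convolution inverting to the stated formula; but the direct inclusion--exclusion route is cleaner and is what I would present.

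Part~(c) is then immediate: $\sum_{m=0}^n (-1)^m/m! \to e^{-1}$ as $n\to\infty$ (it is the truncated Taylor series of $e^{-1}$, with error bounded by $1/(n+1)!$), so $d(n)/n! \to 1/e$, i.e.\ $d(n)\sim n!/e$. I do not anticipate a genuine obstacle here — this is a classical lemma and every step is routine. If anything, the only point requiring a word of care is making the bijective argument in part~(a) fully explicit (that the restriction of a permutation with fixed point set $T$ to $[n]\setminus T$ really is a derangement of that set, and that this correspondence is reversible), but this is standard.
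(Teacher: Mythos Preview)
Your proposal is correct. The paper does not actually prove this lemma: it is introduced with the sentence ``The following lemma is a classical result; part (b) is a classic application of the Inclusion--Exclusion Principle,'' and no proof is given. Your argument supplies precisely the standard details the paper is gesturing at --- the bijection for (a), inclusion--exclusion for (b), and the truncated exponential series for (c) --- so there is nothing to compare beyond noting that you have filled in what the authors left as well known.
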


The next lemma is a refinement of Lemma \ref{lem:dclassic}
according to excedance number.

\begin{lem} \label{lem:exced}
\begin{enumerate}[(a)]
\item $d^{(m)}(n,k) = \binom{n}{m} d(n-m,k)$.
\item $d(n,k) = \sum_{m=0}^{n-k} \binom{n}{m} (-1)^m a(n-m,k)$.
\end{enumerate}
\end{lem}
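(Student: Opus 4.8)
The plan is to prove both parts by standard bijective/inclusion–exclusion arguments, mirroring the structure of Lemma \ref{lem:dclassic} but keeping track of the excedance statistic throughout.

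For part (a), I would argue that the excedance set behaves well under the natural surjection that records the fixed points of a permutation. Given $\pi \in S_n$ with exactly $m$ fixed points and $k$ excedances, choose the $m$-element set $F \subseteq [n]$ of fixed points; there are $\binom{n}{m}$ ways to do this. Restricting $\pi$ to $[n] \smallsetminus F$ and taking the order-isomorphic permutation yields a derangement $\pi'$ of size $n-m$. The key observation is that this operation preserves excedances exactly: a position $i \notin F$ satisfies $\pi(i) > i$ if and only if the corresponding position of $\pi'$ is an excedance of $\pi'$, since deleting fixed points does not change the relative order of the non-fixed entries or their positions (a fixed point $j$ contributes the value $j$ at position $j$, so removing it shifts the positions and values above it down by the same amount). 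Hence $\pi \mapsto (F, \pi')$ is a bijection between permutations counted by $d^{(m)}(n,k)$ and pairs consisting of an $m$-subset of $[n]$ together with a derangement counted by $d(n-m,k)$, which gives $d^{(m)}(n,k) = \binom{n}{m} d(n-m,k)$.

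For part (b), I would combine part (a) with the identity $a(n-m,k) = \sum_{j=0}^{n-m-k} d^{(j)}(n-m,k)$ and then invert. Starting from $a(n,k) = \sum_{m=0}^{n-k} d^{(m)}(n,k) = \sum_{m=0}^{n-k} \binom{n}{m} d(n-m,k)$, this expresses $a(n,k)$ as a binomial transform of the sequence $\{d(n,k)\}_n$ (for fixed $k$). Applying the binomial inversion formula then yields $d(n,k) = \sum_{m=0}^{n-k} \binom{n}{m} (-1)^m a(n-m,k)$. Alternatively, and perhaps more transparently for the paper, I would give the direct inclusion–exclusion argument: to count derangements of $[n]$ with $k$ excedances, sum over the set $T$ of positions that are \emph{permitted} to be fixed points, with sign $(-1)^{|T|}$, the number of permutations fixing all of $T$ and having $k$ excedances; this latter count is $a(n-|T|,k)$ by the excedance-preservation property from part (a) (fixing a set of positions and taking the order-isomorphic permutation on the complement preserves excedance number), giving $\sum_{m} \binom{n}{m}(-1)^m a(n-m,k)$ after grouping by $|T| = m$.

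The main obstacle — really the only point requiring care — is verifying cleanly that deleting (or designating) fixed points preserves the excedance count, including the boundary bookkeeping on the summation range (the upper limit $n-m-k$ or $n-k$ reflecting that a permutation of size $s$ with $k$ excedances needs $s \ge k$, and a derangement of size $s$ with $k$ excedances needs $s \ge k$ together with $s \ge 1$ unless $s = 0$). Once the excedance-preservation lemma is stated precisely, part (a) is immediate and part (b) follows either by binomial inversion or directly by inclusion–exclusion, exactly paralleling the classical proof of Lemma \ref{lem:dclassic}(b).
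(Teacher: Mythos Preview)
Your proposal is correct and follows essentially the same approach as the paper: the same bijection for (a) (you describe its inverse, and you are more explicit about why excedance number is preserved under deleting fixed points), and the same binomial-inversion/inclusion--exclusion argument for (b). One small expository slip: in (a), once $\pi$ is given, the fixed-point set $F$ is determined, so the phrase ``there are $\binom{n}{m}$ ways to do this'' is misplaced---the factor $\binom{n}{m}$ arises because the bijection $\pi \mapsto (F,\pi')$ lands on pairs, and there are $\binom{n}{m}$ possible $F$'s.
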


\begin{proof}
(a) Let $\pi$ be a permutation counted by $d(n-m,k)$; that is, $\pi$ is a derangement in $S_{n-m}$ with $k$ excedances. Let $I \subseteq [n]$ be a set of size $m$. Then $\pi$ and $I$ give rise to a permutation $\pi'$ in $S_n$ as follows: the elements of $I$ are the fixed points of $\pi'$, and $\pi'$ permutes the remaining elements of $[n]$ according to $\pi$. This map $(\pi, I) \mapsto \pi'$ is bijective onto the set of permutations in $S_n$ with $m$ fixed points and $k$ excedances.

(b) $a(n,k) = \sum_{m=0}^{n-k} d^{(m)}(n,k)$; thus, by (a), $a(n,k) = \sum_{m=0}^{n-k} \binom{n}{m} d(n-m,k)$. The result follows by the Inclusion--Exclusion Principle.
\end{proof}

\begin{thm} \label{thm:exacttotal} The number of bounded affine permutations of size $n$ is
\begin{enumerate}[(a)]
\item $|\E_n| = \sum_{m=0}^n \binom{n}{m} \sum_{k=0}^m \binom{m}{k} d(m,k)$;
\item $|\E_n| = \sum_{m=0}^n \binom{n}{m} \sum_{k=0}^m \binom{m}{n-k} (-1)^{n-m} a(m,k)$.
\end{enumerate}
\end{thm}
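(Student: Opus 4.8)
The plan is to count bounded affine permutations of size $n$ by using the standard decomposition from Lemma \ref{lem:stdec} and stratifying according to how the underlying ordinary permutation $\q{\sigma}$ behaves at its excedances, anti-excedances, and fixed points. Fix $\q{\sigma} = \pi \in S_n$, and say $\pi$ has $m$ non-fixed points; equivalently $n-m$ fixed points. By Lemma \ref{lem:stdec}, at each fixed point the word entry $a_i$ is forced to be $0$, while at each of the $m$ non-fixed positions there are exactly two choices for $a_i$: either $\{-1,0\}$ (if the position is an excedance) or $\{0,1\}$ (if it is an anti-excedance). Among those $m$ non-fixed positions we must choose the $\pm1$ values so that the signed sum is zero; the $0$-entries at fixed points are irrelevant to the sum. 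So for a fixed $\pi$ with $e$ excedances and $m-e$ anti-excedances among its $m$ non-fixed positions, the number of valid words $a$ equals the number of ways to pick a subset $T_+$ of the excedances to receive $a_i = -1$ and a subset $T_-$ of the anti-excedances to receive $a_i = +1$ with $|T_+| = |T_-|$. Summing over the common size $j$ of $T_+$ and $T_-$, this count is $\sum_{j} \binom{e}{j}\binom{m-e}{j}$, which by Vandermonde equals $\binom{m}{e}$ — remarkably depending only on $m$ and $e$, not on the fine structure of $\pi$.

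Next I would organize the sum over all $\pi \in S_n$ by first choosing which $n-m$ positions are the fixed points (that is, choosing the fixed-point set $I$ of size $n-m$, contributing $\binom{n}{n-m} = \binom{n}{m}$), and then choosing a derangement on the remaining $m$ positions with a prescribed number $k$ of excedances, of which there are $d(m,k)$. For such a $\pi$ (with the $n-m$ fixed points not counting as excedances), the number of non-fixed positions is $m$ and the number of excedances is $k$, so by the paragraph above the number of valid words $a$ is $\binom{m}{k}$. Therefore
\[
|\E_n| \;=\; \sum_{m=0}^{n} \binom{n}{m} \sum_{k=0}^{m} d(m,k)\,\binom{m}{k},
\]
which is part (a). For part (b), I would substitute the Inclusion–Exclusion formula for $d(m,k)$ from Lemma \ref{lem:exced}(b), namely $d(m,k) = \sum_{\ell=0}^{m-k} \binom{m}{\ell}(-1)^{\ell} a(m-\ell,k)$, and then massage the resulting triple sum into the stated single-indexed form. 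Reindexing the outer variable or the inner sum (replacing $m$ by the size of the derangement block after absorbing the $\ell$-sum, and matching the binomial $\binom{m}{n-k}$ and the sign $(-1)^{n-m}$) should produce formula (b) directly; this is a routine but slightly fiddly index-chase, and I would present it as a short computation rather than belabor it.

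The main obstacle, and the real content of the argument, is the counting-of-words step: proving that for every $\pi \in S_n$ with $m$ non-fixed positions and $k$ excedances, the number of admissible sign-words $a$ is exactly $\binom{m}{k}$, independent of anything else about $\pi$. The key point is the Vandermonde identity $\sum_{j \ge 0} \binom{k}{j}\binom{m-k}{j} = \sum_{j\ge0}\binom{k}{j}\binom{m-k}{m-k-j} = \binom{m}{m-k} = \binom{m}{k}$, applied to the balanced choice of $-1$'s among excedances and $+1$'s among anti-excedances. Once that observation is in hand, everything else is bookkeeping: the fixed points contribute only the binomial $\binom{n}{m}$ for choosing their locations and the forced zero entries, and the non-fixed part is enumerated by $d(m,k)$. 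I would make sure to state explicitly that fixed points of $\q{\sigma}$ are not excedances (so that the excedance count of $\pi$ equals the excedance count of its derangement part), since this is exactly what lets the $a(m,k)$ in part (b) and $d(m,k)$ in part (a) line up correctly.
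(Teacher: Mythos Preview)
Your proposal is correct and follows essentially the same route as the paper's proof: the standard decomposition from Lemma~\ref{lem:stdec}, the Vandermonde identity to count admissible words $a$ for a fixed underlying permutation (yielding $\binom{m}{k}$ depending only on the number of non-fixed points and excedances), and then the stratification by fixed-point set and derangement part via Lemma~\ref{lem:exced}. The only cosmetic difference is that the paper lets $m$ denote the number of fixed points while you let $m$ denote the number of non-fixed points, which amounts to the change of variable $m \leftrightarrow n-m$ that the paper performs at the end anyway.
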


\begin{proof}
We use Lemma \ref{lem:stdec}. Given $\q{\sigma} \in S_n$ with $m$ fixed points and $k$ excedances, how many words $a \in \{0, 1, -1\}^n$ make $(\q{\sigma}, a)$ the standard decomposition of a bounded affine permutation? Let $r$ be the number of $1$'s in $a$; then $r$ is also the number of $(-1)$'s in $a$. The $1$'s occur at positions $i$ where $\sigma(i) < i$; there are $n-m-k$ such positions, so the number of ways of placing $1$'s is $\binom{n-m-k}{r}$. Similarly, the $(-1)$'s occur at positions $i$ where $\sigma(i) > i$; there are $k$ such positions, so the number of ways of placing $(-1)$'s is $\binom{k}{r}$. The other entries of $a$ are all $0$. Thus, the number of ways to choose $a$ with $r$ $1$'s, for a given $\q{\sigma}$, is $\binom{n-m-k}{r} \binom{k}{r}$.

Summing over all $r$, the number of ways to choose $a$ with any number of $1$'s, for a given $\q{\sigma}$, is $\sum_{r\ge0} \binom{n-m-k}{r} \binom{k}{r}$. This equals $\binom{n-m}{k}$, a fun exercise for an introductory combinatorics class. Thus, summing over all $\q{\sigma}$ and all $m$ and $k$,
\[ |\E_n| = \sum_{m=0}^n \sum_{k=0}^{n-m} \binom{n-m}{k} d^{(m)}(n,k). \]
Using Lemma \ref{lem:exced}, this becomes
\begin{align*}
|\E_n| &= \sum_{m=0}^n \binom{n}{m} \sum_{k=0}^{n-m} \binom{n-m}{k} d(n-m,k) \\
&= \sum_{m=0}^n \binom{n}{m} \sum_{k=0}^m \binom{m}{k} d(m,k),
\end{align*}
proving (a).

Again using Lemma \ref{lem:exced},
\begin{align*}
\sum_{m=0}^n \binom{n}{m} \sum_{k=0}^m \binom{m}{k} d(m,k) &= \sum_{m=0}^n \sum_{k=0}^m \sum_{m'=0}^{m-k} \binom{n}{m} \binom{m}{k} \binom{m}{m'} (-1)^{m'} a(m-m',k) \\
&= \sum_{p=0}^n \sum_{k=0}^p \left[ \sum_{m=p}^n \binom{n}{m} \binom{m}{k} \binom{m}{p} (-1)^{m-p} \right] a(p,k) \\
&= \sum_{p=0}^n \sum_{k=0}^p \binom{n}{p} \binom{p}{n-k} (-1)^{n-p} a(p,k),
\end{align*}
proving (b). (Simplifying the summation to arrive at the last line is a more advanced combinatorics exercise.)
\end{proof}

Part (b) of Theorem \ref{thm:exacttotal} expresses $|\E_n|$ in terms of Eulerian numbers $a(m,k)$. As these numbers are well-known, this equality can be taken as an ``answer'' to how many bounded affine permutations there are. Part (a) expresses $|\E_n|$ in terms of ``derangement Eulerian numbers'' $d(m,k)$. These numbers, though easily obtained by Lemma \ref{lem:exced}, are not so canonical; but part (a) of the theorem will be used in determining the asymptotic enumeration of $|\E_n|$.

\section{Asymptotic enumeration of all bounded affine permutations}
   \label{sec-asymp}

\subsection{Gaussian local limit laws}

To find an asymptotic formula for $|\E_n|$, we need to understand the numbers $d(n,k)$ better; to do that, we need to introduce the concepts of \emph{convergence in distribution} and \emph{local limit law}, in the special case where the limiting distribution is a Gaussian distribution.  We follow \cite{FS}.

\begin{defn}
(a) Let $\{f_n(k):k\in \Z, n\in\mathbb{N}\}$ 
be nonnegative real numbers.  
For each $n$, let $f^*_n:=\sum_kf_n(k)$ and assume that
$f^*_n$
is nonzero and finite.  Then for each $n$ we can associate 
a probability distribution $p_n(\cdot)$ on $\Z$ by 
$p_n(k):=f_n(\cdot)/f^*_n$ ($k\in\Z$).  
Let $\mu_f(n)$ and $\sigma_f(n)$ denote the mean and the 
standard deviation of this probability distribution.
\\
(b)  Under the assumptions of (a), we say that $p_n(k)$ (or $f_n(k)$)
\emph{converges in distribution when standardized 
to a Gaussian distribution} if, for each $x \in \R$,
\[ \lim_{n\to\infty} \sum_{k \,\le\, k(n,x)} \frac{f_n(k)}{f^*_n} = \int_{-\infty}^x \frac{1}{\sqrt{2\pi}}\, e^{-t^2/2}\,dt \,, 
\]
where we defined
\[ k(n,x) = \left\lfloor \mu_f(n) + x\sigma_f(n) \right\rfloor. \]
Observe that the sum on the left is the cumulative 
distribution function of $(X_n-\mu_f(n))/\sigma_f(n)$ 
where $X_n$ has distribution $p_n(\cdot)$.
\\
(c)  Under the assumptions of (a), we say that $p_n(k)$ (or $f_n(k)$)
\emph{obeys a Gaussian local limit law} if for each $x\in\R$ and for $k(n,x)$
as in part (b), we have
\[ \lim_{n\to\infty} \sigma_n \frac{f_n\left(k(n,x)\right)}{f^*_n} =
\frac{1}{\sqrt{2\pi}} \, e^{-x^2/2} \]
and the convergence is uniform in $x$. Equivalently, 
with $k = k(n,x)$ as above,
\[ f_n(k) = \frac{1}{\sqrt{2\pi}\,\sigma_f(n)} f^*_n 
 \left[ e^{-x^2/2} + o(1) \right] \]
where the error term $o(1)$ goes to $0$ uniformly in $x$.

\end{defn}

Parts (b) and (c) of the above definition say that the numbers $f_n(k)$ $(k\in \mathbb{Z})$ approximate a normal curve, in two particular senses.

The formula in Theorem \ref{thm:exacttotal}(a) includes a sum of terms of the form $\binom{n}{k} d(n,k)$; we will see later that both $\binom{n}{k}$ and $d(n,k)$ have Gaussian limits. The following result will allow us to deal with the sum.

\begin{lem}
    \label{lem.Gaussian}
Let $\{\alpha_n(k),\beta_n(k)\,:\,n\in\mathbb{N},k\in\mathbb{Z}\}$ be nonnegative numbers.
Let $\alpha^*_n \,=\, \sum_k \alpha_n(k)$ and   $\beta^*_n\,=\,  \sum_k \beta_n(k)$, and assume that
these are nonzero and finite, so that for 
each $n$ we can define probability distributions 
on $\mathbb{Z}$ by $\alpha_n(\cdot)/\alpha^*_n$ 
and $\beta_n(\cdot)/\beta_n^*$.
For %$A_1,A_2\in \mathbb{R}$ and 
$B_1,B_2\in (0,\infty)$, 
assume that as $n\rightarrow\infty$, 
\\
(\textit{i})  The distributions $\alpha_n(\cdot)/\alpha^*_n$
have means $\mu_{\alpha}(n)$ and 
standard deviations $\sigma_{\alpha}(n) \sim \sqrt{B_1n}$ and converge in distribution when standardized to
a Gaussian distribution; and
\\
(\textit{ii})  The distributions $\beta_n(\cdot)/\beta^*_n$ have means $\mu_{\beta}(n)$ and 
standard deviations $\sigma_{\beta}(n) \sim\sqrt{ B_2n}$ and obeys a Gaussian local limit law; and
\\
(\text{iii})   $\mu_{\alpha}(n)-\mu_{\beta}(n) \,=\,o(\sqrt{n})$.
\\
Then 
\[      \sum_k  \alpha_n(k)\beta_n(k)    \;\sim\;   \frac{\alpha_n^* \beta_n^*}{\sqrt{2\pi  (B_1+B_2)\,n}}
    \hspace{5mm}  \hbox{ as }n\rightarrow\infty.
\]
\end{lem}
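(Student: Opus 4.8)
The plan is to convert the sum $\sum_k \alpha_n(k)\beta_n(k)$ into an expression where the summand is (up to small errors) a Gaussian integrand, then recognize the limit as a Gaussian integral. First I would introduce the common centering point. Write $\mu := \mu_\alpha(n)$ (using hypothesis (iii), it does not matter much which mean we center at, since $\mu_\alpha(n) - \mu_\beta(n) = o(\sqrt n)$). For each $k$, set $x_k = x_k(n)$ so that $k = \lfloor \mu_\beta(n) + x_k\,\sigma_\beta(n)\rfloor$, i.e.\ $x_k \approx (k - \mu_\beta(n))/\sigma_\beta(n)$; thus as $k$ ranges over $\Z$, the points $x_k$ form a grid of spacing $\approx 1/\sigma_\beta(n) \sim 1/\sqrt{B_2 n}$. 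By hypothesis (ii) (the local limit law for $\beta$), we can replace $\beta_n(k)$ by $\frac{\beta_n^*}{\sqrt{2\pi}\,\sigma_\beta(n)}\bigl[e^{-x_k^2/2} + o(1)\bigr]$ uniformly in $k$. So the sum becomes, up to the uniform error,
\[
\sum_k \alpha_n(k)\,\beta_n(k) \;=\; \frac{\beta_n^*}{\sqrt{2\pi}\,\sigma_\beta(n)}\sum_k \alpha_n(k)\,e^{-x_k^2/2} \;+\; (\text{error}),
\]
and the main task is to evaluate $\frac1{\alpha_n^*}\sum_k \alpha_n(k)\, e^{-x_k^2/2}$, which is exactly $\mathbb{E}\bigl[e^{-X_k^2/2}\bigr]$ where $X_k$ has distribution $\alpha_n(\cdot)/\alpha_n^*$ re-expressed on the $x$-scale.

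The heart of the argument is then a weak-convergence computation. By hypothesis (i), the standardized variable $(K - \mu_\alpha(n))/\sigma_\alpha(n)$ converges in distribution to a standard normal, where $K \sim \alpha_n(\cdot)/\alpha_n^*$. But $x_K \approx (K - \mu_\beta(n))/\sigma_\beta(n) = \frac{\sigma_\alpha(n)}{\sigma_\beta(n)}\cdot\frac{K-\mu_\alpha(n)}{\sigma_\alpha(n)} + \frac{\mu_\alpha(n)-\mu_\beta(n)}{\sigma_\beta(n)}$; using $\sigma_\alpha(n)/\sigma_\beta(n) \to \sqrt{B_1/B_2}$ and $(\mu_\alpha(n)-\mu_\beta(n))/\sigma_\beta(n) \to 0$ (hypothesis (iii)), Slutsky's theorem gives $x_K \Rightarrow \sqrt{B_1/B_2}\cdot Z$ with $Z$ standard normal. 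Since $y \mapsto e^{-y^2/2}$ is bounded and continuous, the continuous mapping / bounded-convergence theorem for weak convergence yields
\[
\frac{1}{\alpha_n^*}\sum_k \alpha_n(k)\, e^{-x_k^2/2} \;=\; \mathbb{E}\bigl[e^{-x_K^2/2}\bigr] \;\longrightarrow\; \mathbb{E}\bigl[e^{-(B_1/B_2)Z^2/2}\bigr] \;=\; \frac{1}{\sqrt{1 + B_1/B_2}} \;=\; \sqrt{\frac{B_2}{B_1+B_2}}.
\]
Combining with the prefactor $\frac{\alpha_n^*\beta_n^*}{\sqrt{2\pi}\,\sigma_\beta(n)}$ and using $\sigma_\beta(n) \sim \sqrt{B_2 n}$, the $\sqrt{B_2}$ factors cancel and we get $\sum_k\alpha_n(k)\beta_n(k) \sim \dfrac{\alpha_n^*\beta_n^*}{\sqrt{2\pi(B_1+B_2)n}}$, as claimed.

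The main obstacle is controlling the error term from the local limit law when summed against $\alpha_n(k)$. The local limit law gives $\beta_n(k) = \frac{\beta_n^*}{\sqrt{2\pi}\,\sigma_\beta(n)}[e^{-x_k^2/2} + o(1)]$ with the $o(1)$ uniform in $x$ (hence in $k$), but summing a uniform $o(1)$ over all $k \in \Z$ is not automatically negligible — there are infinitely many terms. The fix is to split the range of $k$: on a window $|x_k| \le R$ (for a slowly growing or large fixed $R$), there are $O(\sigma_\beta(n)) = O(\sqrt n)$ relevant values of $k$, the $\alpha_n(k)$ sum to at most $\alpha_n^*$ there, and the contribution of the error is $\le \frac{\beta_n^*}{\sqrt{2\pi}\,\sigma_\beta(n)}\cdot o(1)\cdot\alpha_n^* = o(\alpha_n^*\beta_n^*/\sqrt n)$; outside the window, both the Gaussian term $e^{-x_k^2/2}$ and (via the convergence-in-distribution hypothesis (i) controlling the $\alpha$-mass in the tails) the $\alpha_n$-mass are small, so the tail contributes negligibly after sending $R \to \infty$ following $n \to \infty$. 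Making this two-parameter limit rigorous — choosing $R = R(n) \to \infty$ slowly enough that the windowed error stays $o(1)$ while the tail bound still applies — is the one place requiring care; everything else is routine application of the stated hypotheses and standard probabilistic limit theorems.
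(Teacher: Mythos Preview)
Your approach is essentially the same as the paper's: write the sum as an expectation against the $\alpha$-distribution, use the local limit law for $\beta$ to replace $\beta_n(k)$ by $\frac{\beta_n^*}{\sigma_\beta(n)}\phi(x_k)$, use Slutsky to show the rescaled variable $x_K$ converges in distribution to $\sqrt{B_1/B_2}\,Z$, and then invoke convergence of expectations of the bounded continuous function $y\mapsto e^{-y^2/2}$.

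The one substantive remark is that the ``main obstacle'' you identify is not actually an obstacle, and the truncation/windowing argument you sketch is unnecessary. You are not summing a uniform $o(1)$ over infinitely many terms with unit weights; you are summing it against the probability weights $\alpha_n(k)/\alpha_n^*$, which total $1$. Concretely, writing $\epsilon_n = \sup_x\bigl|\sigma_\beta(n)\beta_n(k(n,x))/\beta_n^* - \phi(x)\bigr|$, the error in replacing $\sum_k \alpha_n(k)\beta_n(k)$ by $\frac{\beta_n^*}{\sigma_\beta(n)}\sum_k \alpha_n(k)\phi(x_k)$ is at most $\frac{\beta_n^*}{\sigma_\beta(n)}\,\epsilon_n\sum_k\alpha_n(k) = \frac{\alpha_n^*\beta_n^*}{\sigma_\beta(n)}\,\epsilon_n$, which is already $o\bigl(\alpha_n^*\beta_n^*/\sqrt{n}\bigr)$. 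The paper packages this as the one-line estimate $\bigl|\theta_n\sigma_\beta(n)/\beta_n^* - \psi_n\bigr| \le \epsilon_n$, where $\theta_n = \mathbb{E}[b_n(Z_n)]/\alpha_n^*$ and $\psi_n = \mathbb{E}[\phi(\widehat W_n)]$; since both are expectations under the same probability measure, the uniform bound on the integrand passes directly to the expectation. No splitting into a window and tails is required.
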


\begin{proof}
Define the functions $\phi$ and $b_n$  ($n\in \mathbb{N}$) from $\mathbb{R}$ to $[0,\infty)$ by
\begin{align*}
     \phi(x)   \;  & = \;   \frac{1}{\sqrt{2\pi}} \, e^{-x^2/2}   \,,  \\
     b_n(x)   \; & = \;         \beta_n(\lfloor x  \rfloor)    \,.
\end{align*}
Then assumption (\textit{ii}) says that $\lim_{n\rightarrow\infty}\epsilon_n = 0$, where
\begin{equation}
   \label{eq.epsmdef1}
   \epsilon_n  \;:=\;   \sup_{x\in \mathbb{R}} 
    \left|   \frac{ b_n\! \left( \mu_{\beta}(n)+x\sigma_{\beta}(n)\right)}{\beta^*_m} \,\sigma_{\beta}(n) 
         \;-\; \phi(x) \right|  \,.
\end{equation}

For each positive integer $n$, let $Z_n$ be a random variable whose probability distribution is
\[     \Pr(Z_n = k)   \;=\;   \frac{\alpha_n(k)}{\alpha^*_n}  \hspace{15mm}(k\in\mathbb{Z}).    \]
Assumption (\textit{i}) says  the standardized random variable
\[     W_n    \;  := \;  \frac{Z_n-\mu_{\alpha}(n)}{\sigma_{\alpha}(n)}     \]
converges in distribution to the standard normal distribution as $n\rightarrow\infty$.

Define
\[     \theta_n \;=\;  \sum_{k\in\mathbb{Z}}   \frac{\alpha_n(k)}{\alpha^*_n}\, \beta_n(k)   \,.    \]
Then we can express $\theta_n$ as an expected value of a function of $Z_n$:
\begin{align}
    \nonumber
    \theta_n   \;  & =  \;  E\left[ b_n(Z_n)\right]    \\
       \nonumber
        & =\;     E\left[ b_n\! \left(   \mu_{\alpha}(n) +W_n\sigma_{\alpha}(n) \right) \right]   \\
          \label{eq.thetaEb1}
         & =\;     E\left[ b_n\! \left(      \mu_{\beta}(n) +
       \widehat{W}_n   \sigma_{\beta}(n)        \right) \right]  
\end{align}
where 
\[     \widehat{W}_n   \; :=\;   W_n\left(\frac{\sigma_{\alpha}(n)}{\sigma_{\beta}(n)}\right) +  
        \frac{\mu_{\alpha}(n)- \mu_{\beta}(n)}{\sigma_{\beta}(n)} \,.
\]  
Let $W_{\infty}$ be a standard normal random variable, so that $W_n$ converges in distribution to $W_{\infty}$.              
By our assumptions, we know that $\sigma_{\alpha}(n)/\sigma_{\beta}(n)\rightarrow \sqrt{B_1/B_2}$
and $| \mu_{\alpha}(n)- \mu_{\beta}(n)|/\sigma_{\beta}(n)\rightarrow 0$ as $n\rightarrow\infty$.
Therefore, by the corollary to Theorem 4.4.6 in \cite{Chung},
we see that $\widehat{W}_n$ converges in distribution to $W_{\infty}\sqrt{B_1/B_2}$.

Now define
\[      \psi_n    \;:=\;    E\left[ \phi\left(     \widehat{W}_n    \right)\right] \,.  \]
Then, by Equations (\ref{eq.epsmdef1}) and (\ref{eq.thetaEb1}), we see that
\begin{equation}
     \label{eq.thetapsibd1}
        \left|   \frac{\theta_n \,\sigma_{\beta}(n)}{ \beta^*_n}  \;-\; \psi_n \right|    \;\leq \; \epsilon_n  \,.
\end{equation}     

A fundamental property of convergence in distribution is that if a  sequence of random variables $\{X_m\}$
converges in distribution to $X$, then $E[g(X_m)]$ converges to $E[g(X)]$ for every bounded
continuous function $g$ (see for example Theorem 2.1 of Billingsley \cite{Bill}; 
indeed, like many 
authors, Billingsley uses this property as the definition of convergence in distribution).
Thus, writing $c=\sqrt{B_1/B_2}$, we obtain
\begin{align*}
   \lim_{n\rightarrow\infty}  \psi_n   \; & = \; E\left[ \phi( c W_{\infty} )\right]    \\
       & = \;   \int_{-\infty}^{\infty}   \left(  \frac{1}{\sqrt{2\pi}} \, e^{-c^2x^2/2} \right)   \,
              \frac{1}{\sqrt{2\pi}} \, e^{-x^2/2} \,dx    \\
       & = \;   \frac{1}{2\pi}    \int_{-\infty}^{\infty}   e^{-(1+c^2)x^2/2} \,dx    \\
       & = \;  \frac{1}{2\pi} \sqrt{ \frac{2\pi}{1+c^2} }  
           \hspace{15mm}\left( \mbox{using } \int_{-\infty}^{\infty}e^{-Ax^2}\,dx =\sqrt{\pi/A}\right) \\
        & = \;   \sqrt{    \frac{B_2}{2\pi (B_1+B_2)} } \,.
\end{align*}
Applying this to Equation (\ref{eq.thetapsibd1}), and using the convergence of $\epsilon_n$ to 0, we obtain
\[     \lim_{n\rightarrow\infty}  \frac{\theta_n}{\beta^*_n} \, \sigma_{\beta}(n)    \;=\;  
       \, \sqrt{    \frac{B_2}{2\pi (B_1+B_2)} } \,.
\] 
Finally, since $\sigma_{\beta}(n)\sim \sqrt{B_2n}\,$, the lemma follows.
\end{proof}

\subsection{Derangement Eulerian numbers}

Define
\[ A(z,u) = \sum_{n,k\ge0} a(n,k) u^k \frac{z^n}{n!} \quad \text{and} \quad D(z,u) = \sum_{n,k\ge0} d(n,k) u^k \frac{z^n}{n!}. \]
So $A(z,u)$ (resp.\ $D(z,u)$) is the bivariate generating function counting permutations (resp.\ derangements) according to excedance number. It is a classical result that
\[ A(z,u) = \frac{u-1}{u-e^{(u-1)z}}. \]
A permutation is a derangement with an added set of fixed points, and inserting additional fixed points into a permutation preserves the permutation's excedance number. Thus, since $e^z$ is the generating function counting sets of fixed points,
\[ A(z,u) = e^z D(z,u), \]
and so
\[ D(z,u) = e^{-z} A(z,u) = \frac{(u-1)e^{-z}}{u-e^{(u-1)z}}. \]
(This also serves as a much more compact proof of Lemma \ref{lem:exced}.)

The following theorem was proved by Bender \cite{Bender} and appears as Example
IX.35 in in Flajolet and Sedgewick \cite{FS} (though note that these use the alternative convention for the Eulerian numbers, used also by Stanley \cite{EC1}).
\begin{thm} \label{thm:gaussianA} The Eulerian numbers $a(n,k)$ obey a Gaussian local limit law with mean $\mu_a(n)= (n-1)/2$ and variance $\sigma_a(n)^2 \sim n/12$.
\end{thm}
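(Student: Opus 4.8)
The plan is to derive the Gaussian local limit law for the Eulerian numbers $a(n,k)$ from the explicit bivariate generating function $A(z,u) = \frac{u-1}{u-e^{(u-1)z}}$ using the quasi-powers / perturbation machinery of Flajolet and Sedgewick. First I would fix $u$ near $1$ and study, for each such $u$, the dominant singularity of $z \mapsto A(z,u)$: this is the smallest positive root $\rho(u)$ of $u = e^{(u-1)z}$, i.e.\ $\rho(u) = \frac{\log u}{u-1}$ (with $\rho(1) = 1$ by continuity). The function $\rho(u)$ is analytic in a neighbourhood of $u=1$, and $A(z,u)$ has a simple pole at $z = \rho(u)$ with residue that is also analytic and nonzero near $u=1$. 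By singularity analysis (transfer theorems), this gives
\[ [z^n] A(z,u) = c(u)\,\rho(u)^{-n}\,\bigl(1 + O(K^{-n})\bigr) \]
uniformly for $u$ in a complex neighbourhood of $1$, for some $K>1$, where $c(u)$ is analytic and nonzero there. Hence the probability generating function of the excedance-number statistic, namely $p_n(u) := \frac{[z^n]A(z,u)}{[z^n]A(z,1)}$, has the quasi-powers form $p_n(u) \sim c(u)/c(1)\cdot \bigl(\rho(1)/\rho(u)\bigr)^{n}$ uniformly near $u=1$.

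Next I would invoke the Quasi-Powers Theorem (Flajolet--Sedgewick, Thm.\ IX.8) together with its local-limit-law refinement (Thm.\ IX.14), for which the needed hypotheses are exactly: (1) the uniform quasi-powers expansion above with $B(u) := \log(\rho(1)/\rho(u))$ analytic at $u=1$; (2) the \emph{variability condition} $B''(1) + B'(1) \neq 0$; and (3) for the \emph{local} limit law, that the support of $a(n,\cdot)$ is an interval of integers (no periodicity obstruction) — which is clear since every value $k$ from $0$ to $n-1$ occurs. Granting these, the theorem yields that $a(n,k)$ obeys a Gaussian local limit law with mean $\mu_a(n) = B'(1)\,n + O(1)$ and variance $\sigma_a(n)^2 = (B''(1)+B'(1))\,n + O(1)$. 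It then remains to compute $B'(1)$ and $B''(1)$ from $\rho(u) = \frac{\log u}{u-1}$ by a short Taylor expansion around $u=1$: writing $u = 1+t$, one finds $\rho(1+t) = 1 - t/2 + t^2/3 - \cdots$, hence $B(1+t) = -\log\rho(1+t) = t/2 - t^2/24 + \cdots$ (I would double-check the $t^2$ coefficient), which gives $B'(1) = 1/2$ and $B''(1)+B'(1) = 2\cdot(\text{coeff of }t^2) + 1/2$, to be reconciled with the claimed $\mu_a(n) = (n-1)/2$ and $\sigma_a(n)^2 \sim n/12$. (The exact value $\mu_a(n) = (n-1)/2$, as opposed to merely $n/2 + O(1)$, follows most cleanly from the symmetry $a(n,k) = a(n,n-k)$ of the Eulerian numbers, which I would note separately.)

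The main obstacle I anticipate is verifying the uniformity of the singularity-analysis estimate in the parameter $u$ — that is, checking that $z = \rho(u)$ remains the unique dominant singularity, that it stays simple, and that the sub-dominant contribution is uniformly exponentially smaller, all for $u$ ranging over a genuine complex neighbourhood of $1$ (not just a real interval). This is the standard but slightly delicate ``meromorphic schema with a movable simple pole'' verification; concretely it amounts to confirming that for $u$ close enough to $1$ the equation $u = e^{(u-1)z}$ has no root in $|z| \le \rho(u) + \delta$ other than $z = \rho(u)$, which one does by a continuity/compactness argument starting from the $u=1$ case where $A(z,1) = \frac{1}{1-z}$ has its only singularity at $z=1$. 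Since, as the excerpt notes, this theorem is due to Bender and is recorded as Example IX.35 in Flajolet--Sedgewick, I would present the above as the verification that $A(z,u)$ fits the meromorphic-schema hypotheses of their general theorem and then simply quote the conclusion, rather than reproving the quasi-powers/local-limit apparatus from scratch.
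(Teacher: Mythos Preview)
Your approach is correct and is exactly the meromorphic-schema verification carried out in Flajolet--Sedgewick, Example~IX.35 (together with Example~IX.12). The paper, however, does not prove this theorem at all: it simply states it as a known result, citing Bender and Flajolet--Sedgewick, and then moves on to use it. So there is nothing to compare against beyond noting that you have reconstructed the argument the cited references give.

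Two small corrections to your computation: the expansion of $\rho(1+t) = (\log(1+t))/t$ gives $1 - t/2 + t^2/3 - \cdots$, and hence $B(1+t) = -\log\rho(1+t) = t/2 - \tfrac{5}{24}t^2 + O(t^3)$, not $-t^2/24$; this yields $B'(1)=1/2$ and $B''(1)+B'(1) = -5/12 + 1/2 = 1/12$, matching the claimed variance. Also, the Eulerian symmetry is $a(n,k)=a(n,n-1-k)$ (excedance count ranges over $0,\ldots,n-1$), not $a(n,n-k)$; this is what gives the exact mean $(n-1)/2$.
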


The proof in Flajolet and Sedgewick uses the analytic properties of $A(z,u)$ treated as a function of complex variables. We will use that idea to obtain:
\begin{thm} \label{thm:gaussianD} 
The derangement Eulerian numbers $d(n,k)$ 
obey a Gaussian local limit law
%converge in distribution to a Gaussian random variable 
with mean $\mu_d(n) = n/2$ and variance $\sigma_d(n)^2 \sim n/12$.
%and they obey a Gaussian local limit law with the same mean and variance.
\end{thm}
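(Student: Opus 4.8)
The plan is to mirror the proof of Theorem~\ref{thm:gaussianA} but working with $D(z,u)$ in place of $A(z,u)$, exploiting the fact that $D(z,u) = e^{-z} A(z,u)$ differs from $A(z,u)$ only by the entire factor $e^{-z}$, which is analytically harmless. Concretely, I would invoke the ``quasi-powers'' framework of Flajolet and Sedgewick (the meromorphic schema behind Example~IX.35, i.e.\ Theorem~IX.9 together with the local limit theorem IX.13): for $u$ in a neighborhood of $1$, the function $z \mapsto D(z,u)$ is meromorphic, and its dominant singularity is a simple pole at the same location $\rho(u)$ as that of $A(z,u)$, since $e^{-z}$ contributes no singularity and does not vanish. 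The root $\rho(u)$ of the denominator $u - e^{(u-1)z}$ is $\rho(u) = \dfrac{\log u}{u-1}$ (with $\rho(1)=1$), exactly as for the Eulerian case.

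First I would extract the singular expansion: near $z = \rho(u)$ we have $D(z,u) \sim \dfrac{C(u)}{1 - z/\rho(u)}$ for an explicit nonzero analytic $C(u)$ (namely $e^{-\rho(u)}$ times the residue constant from $A$), so that $[z^n] D(z,u) \sim C(u)\,\rho(u)^{-n}$ uniformly for $u$ near $1$. This gives $n!\,[z^n]D(z,u) = d_n(u)\,(1+o(1))$ where $d_n(u) := \sum_k d(n,k)u^k$ behaves like a quasi-power $C(u)\rho(u)^{-n}$. Then the standard machinery (Hwang's quasi-powers theorem plus the local limit refinement in \cite[Sec.~IX.9]{FS}) yields a Gaussian local limit law with mean $\mu_d(n) \sim -n\,\dfrac{\rho'(1)}{\rho(1)}$ and variance $\sigma_d(n)^2 \sim n\left(-\dfrac{\rho''(1)}{\rho(1)} - \dfrac{\rho'(1)}{\rho(1)} + \left(\dfrac{\rho'(1)}{\rho(1)}\right)^2\right)$. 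Since $\rho$ is literally the same function as in the Eulerian case, these formulas produce the same leading asymptotics as for $a(n,k)$; I expect $\mu_d(n) = n/2$ and $\sigma_d(n)^2 \sim n/12$ to come out either by direct computation of $\rho'(1), \rho''(1)$, or more cheaply by the observation below.

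In fact, the cleanest route to the mean and variance avoids differentiating $\rho$ altogether: since a permutation counted by $a(n,k)$ is a derangement (with its own excedances) together with an inserted set of fixed points, and fixed points are never excedances, we have the exact convolution identity $a(n,k) = \sum_{m} \binom{n}{m} d(n-m,k)$ from Lemma~\ref{lem:exced}. Interpreting this probabilistically, a uniformly random permutation of $[n]$, conditioned on its number of non-fixed points being $n-m$, has excedance distribution equal to that of a random derangement of size $n-m$; and the number of fixed points of a random permutation is $O(1)$ in probability (concentrated near its mean, which tends to $1$). Hence the excedance count of a random permutation and of a random derangement of the same size differ by a quantity that is $o(\sqrt n)$ in probability, which forces $\mu_d(n) = \mu_a(n) + o(\sqrt n) = n/2 + o(\sqrt n)$ and $\sigma_d(n)^2 \sim \sigma_a(n)^2 \sim n/12$; a small separate computation pins down $\mu_d(n) = n/2$ exactly (e.g.\ from the symmetry $d(n,k) = d(n,n-k)$, which follows from the reverse-complement involution restricted to derangements).

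\textbf{Main obstacle.} The one genuinely non-formal point is verifying that $D(z,u)$ satisfies the hypotheses of the meromorphic quasi-powers / local-limit schema \emph{uniformly} in $u$ near $1$: namely that $\rho(u)$ remains the unique dominant singularity (no other pole or the exponential factor creeping in to tie it), that it is a simple pole with nonvanishing residue, and that the aperiodicity/variability conditions ensuring a true \emph{local} (not merely central) limit law hold. Because $D = e^{-z}A$ and $e^{-z}$ is entire and zero-free, each of these is inherited directly from the corresponding verified statement for $A(z,u)$ in \cite{FS}; I would state this inheritance carefully rather than redo the contour estimates. Everything else is bookkeeping with the explicit formula for $\rho(u)$.
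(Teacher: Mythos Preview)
Your proposal is correct and follows essentially the same approach as the paper: the key observation that $D(z,u) = e^{-z}A(z,u)$ with $e^{-z}$ entire and zero-free, so that the meromorphic-schema hypotheses and the dominant-singularity function $\rho(u)$ transfer unchanged from $A$ to $D$, is exactly what the paper does (packaged there as a standalone lemma stating that multiplying by an entire nonvanishing factor preserves the conditions of the schema and the asymptotic mean and variance). The only minor difference is that the paper obtains the exact symmetry $d(n,k)=d(n,n-k)$ via the inverse involution $\pi\mapsto\pi^{-1}$ rather than reverse-complement; both work for derangements.
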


We need the following theorem, which comes from Theorems IX.9 and IX.14
in Flajolet and Sedgewick \cite{FS}.
\begin{thm} \label{lem:FS} Let $F(z,u)$ be a function that is bivariate analytic at $(z,u) = (0,0)$ and has non-negative coefficients of its power series. Let $r>0$ such that $F(z,1)$ is meromorphic on the disk $|z| \le r$. Assume the following:
\begin{enumerate}[(1)]
\item The only singularity of $F(z,1)$ on the disk $|z| \le r$ is a simple pole at $z = \rho$ for some $\rho \in (0, r)$.
\item There are functions $B(z,u)$ and $C(z,u)$, analytic for $z$ with $|z| \le r$ and $u$ in some neighborhood $1$, such that
\[ F(z,u) = \frac{B(z,u)}{C(z,u)} \]
and $B(\rho,1) \not= 0$.
\item There is non-constant $\rho(u)$, analytic at $u=1$, such that $C(\rho(u), u) = 0$ and $\rho(1) = \rho$.
\item $v : = \left( \frac{\rho'(1)}{\rho(1)} \right)^2 - \frac{\rho'(1)}{\rho(1)} - \frac{\rho''(1)}{\rho(1)} \not= 0$.
\item For every $u$ with $|u| = 1$, the function $z \mapsto F(z,u)$ has no singularity with modulus $\le \rho$ (unless $u=1$).
\end{enumerate}
Also set $m = -\frac{\rho'(1)}{\rho(1)}$. Then the numbers $f(n,k) := [z^n u^k]F(z,u)$ satisfy a local limit law of Gaussian type with mean $\mu_f(n) \sim mn$ and standard deviation $\sigma_f(n) \sim \sqrt{vn}$.
\end{thm}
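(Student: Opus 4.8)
Theorem~\ref{lem:FS} is not really a theorem to prove from scratch: it is the packaging into one statement of two standard results from \cite{FS}, so the plan is to verify that hypotheses (1)--(5) line up exactly with the hypotheses of those two results and that the conclusions agree. The two ingredients are the \emph{meromorphic schema} (Theorem~IX.9 of \cite{FS}), which produces a Gaussian central limit law from a bivariate generating function with a movable simple dominant pole, and the \emph{local limit law} theorem (Theorem~IX.14 of \cite{FS}), which upgrades a central limit law of that type to a local one. Accordingly the proof will have two halves, carried out in that order.

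For the first half, I would observe that hypotheses (1)--(4) are precisely the hypotheses of the meromorphic schema: (1) isolates a single simple dominant pole of $F(z,1)$ at $z=\rho$; (2) supplies a meromorphic factorisation $F=B/C$ whose numerator does not vanish at that pole; (3) asserts that the pole deforms analytically, and non-constantly, to $\rho(u)$ as $u$ moves away from $1$; and (4) is the non-degeneracy condition $v\ne0$. Theorem~IX.9 of \cite{FS} then gives, locally uniformly for $u$ near $1$, a quasi-power estimate for $[z^n]F(z,u)$ of the shape $\rho(u)^{-n}$ times a non-zero analytic prefactor, from which it reads off that $f(n,k)=[z^n u^k]F(z,u)$ obeys a Gaussian central limit law with $\mu_f(n)\sim mn$, $m=-\rho'(1)/\rho(1)$, and $\sigma_f(n)^2\sim vn$ with $v$ as in (4); here one invokes the standard identity (which I would cite rather than re-derive) that the first two $s$-derivatives at $s=0$ of $-\log\rho(e^s)$ are $m$ and $v$.

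For the second half, the point is that a central limit law alone does not prevent the normalised coefficients $f(n,\cdot)/f^*_n$ from concentrating on a proper sublattice of $\Z$ --- a periodicity obstruction --- and ruling this out is exactly what controlling $z\mapsto F(z,u)$ on the unit circle $|u|=1$ achieves. Hypothesis (5), that for every $u$ on the unit circle with $u\ne1$ all singularities of $z\mapsto F(z,u)$ have modulus strictly greater than $\rho$, is precisely the additional assumption under which Theorem~IX.14 of \cite{FS} promotes the quasi-power estimate to the uniform local estimate $f(n,k)=\frac{f^*_n}{\sqrt{2\pi}\,\sigma_f(n)}\bigl(e^{-x^2/2}+o(1)\bigr)$ at $k=\lfloor\mu_f(n)+x\sigma_f(n)\rfloor$ with error $o(1)$ uniform in $x$ --- which is the definition of a Gaussian local limit law used in the text. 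Assembling the two halves gives the theorem. The only point requiring genuine care, and the step I expect to be the main obstacle, is reconciling the exact wording of the local-limit hypothesis in \cite{FS} with hypothesis (5) as stated here, since \cite{FS} sometimes phrases that requirement via an explicit aperiodicity/gcd condition on the support, which one must translate into the singularity-location form (5); once that is done, the result follows immediately from the two cited theorems.
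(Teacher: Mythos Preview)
Your proposal is correct and follows essentially the same approach as the paper: both invoke Theorem~IX.9 of \cite{FS} to deduce the Gaussian central limit law from hypotheses (1)--(4), then Theorem~IX.14 to upgrade to a local limit law under hypothesis (5). The paper handles your anticipated obstacle --- translating the singularity-location form of (5) into the extra condition required by Theorem~IX.14 --- by citing the paragraph after the proof of that theorem on page~697 of \cite{FS}, so no additional work is needed there.
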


\begin{proof}
Conditions (1) through (4) correspond in a straightforward manner to the conditions in Theorem IX.9 from Flajolet and Sedgewick (with $\mathfrak{v}$ defined in Equation (27) on page 645). Hence the numbers $f(n,k)$ converge \emph{in distribution} to a Gaussian distribution. The mean $\mu_f(n)$ and variance $\sigma_f(n)$ of this distribution are given by Equation (37) in the first paragraph of Flajolet and Sedgewick's proof:
\[ \mu_f(n) \sim \mathfrak{m}\!\left( \frac{\rho(1)}{\rho(u)} \right) n \quad \text{and} \quad \sigma_f(n)^2 \sim \mathfrak{v}\!\left( \frac{\rho(1)}{\rho(u)} \right) n, \]
where $\mathfrak{m}$ and $\mathfrak{v}$ are operators defined by Flajolet and Sedgewick in their Equation (27) on page 645. Using the definitions of $\mathfrak{m}$ and $\mathfrak{v}$ yields $\mu_f(n) \sim mn$ and $\sigma_f(n)^2 \sim vn$, as claimed.

To show that the numbers $f(n,k)$ satisfy a local limit law of Gaussian type, we use Theorem IX.14 from Flajolet and Sedgewick. This theorem requires that $p_n(u) := \frac{[z^n]F(z,u)}{[z^n]F(z,1)}$ satisfies the conditions of the ``Quasi-Powers Theorem'' (Theorem IX.8 from Flajolet and Sedgewick) and an additional condition. The proof of Theorem IX.9 shows that $p_n(u)$ satisfies the conditions of the Quasi-Powers Theorem, and the additional condition is implied by our (5), as explained on page 697 of Flajolet and Sedgewick (the paragraph after the proof of Theorem IX.14).
\end{proof}

For our purposes, we can safely ignore Theorem \ref{lem:FS} after we use it to obtain the following:

\begin{lem} \label{lem:same}
Assume $F(z,u)$ satisfies the conditions of Theorem \ref{lem:FS}, and let $\alpha(z,u)$ be a function that is analytic and non-zero on all of $\C \times \C$. Assume also that $\alpha(z,u)\,F(z,u)$ has non-negative coefficients of its power series. Then $\alpha(z,u)\,F(z,u)$ satisfies the conditions of Theorem \ref{lem:FS}, and the mean and standard deviation of the coefficients of $\alpha(z,u)\,F(z,u)$ are asymptotically the same as for $F(z,u)$.
\end{lem}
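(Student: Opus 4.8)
The plan is to verify the five numbered conditions of Theorem~\ref{lem:FS} for $\alpha(z,u)\,F(z,u)$ one at a time, transferring each from the corresponding condition for $F(z,u)$ by using that $\alpha$ is entire and nowhere zero, and then to observe that the quantities $\rho(u)$, $m$, and $v$ that govern the mean and variance are literally unchanged. The key structural point is that multiplying by an everywhere-analytic, everywhere-nonzero function changes neither the location nor the nature of any singularity, nor the zero set of the denominator, so essentially every hypothesis is preserved verbatim.

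Concretely, I would proceed as follows. Write $F = B/C$ as in condition~(2), and set $\widetilde B(z,u) := \alpha(z,u)\,B(z,u)$, $\widetilde C(z,u) := C(z,u)$, so that $\alpha F = \widetilde B/\widetilde C$. Since $\alpha$ is analytic on all of $\mathbb C\times\mathbb C$, $\widetilde B$ is analytic wherever $B$ is (in particular on $|z|\le r$ and near $u=1$), and since $\alpha$ is nonzero everywhere, $\widetilde B(\rho,1) = \alpha(\rho,1)B(\rho,1)\ne 0$; this gives condition~(2). For condition~(1): the singularities of $(\alpha F)(z,1) = \alpha(z,1)F(z,1)$ on $|z|\le r$ are exactly those of $F(z,1)$ — $\alpha(z,1)$ contributes none, being entire, and cannot cancel the simple pole at $z=\rho$ because $\alpha(\rho,1)\ne0$ — so $z=\rho$ remains the unique singularity and is still a simple pole. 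Condition~(3) is immediate: the same function $\rho(u)$ works, since $\widetilde C = C$, so $\widetilde C(\rho(u),u)=0$ and $\rho(1)=\rho$, and $\rho(u)$ is still non-constant. Condition~(4) is the quantity $v = (\rho'(1)/\rho(1))^2 - \rho'(1)/\rho(1) - \rho''(1)/\rho(1)$, which depends only on $\rho(u)$ — unchanged — so $v\ne0$ still holds, and likewise $m = -\rho'(1)/\rho(1)$ is unchanged. Condition~(5): for $|u|=1$, $u\ne1$, the function $z\mapsto(\alpha F)(z,u)$ has the same singularities as $z\mapsto F(z,u)$ (again $\alpha$ adds none and cancels none), so there is no singularity of modulus $\le\rho$. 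Having checked all five conditions, Theorem~\ref{lem:FS} applies to $\alpha F$ and yields a Gaussian local limit law with mean $\sim mn$ and standard deviation $\sim\sqrt{vn}$ — the same $m$ and $v$ as for $F$, which is exactly the claim.

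I expect the only point requiring any care — the ``main obstacle,'' such as it is — to be the justification that $\alpha$ cannot create or cancel singularities: one must note that a pole of $F(z,1)$ at $z=\rho$ stays a pole of $\alpha(z,1)F(z,1)$ precisely because $\alpha(\rho,1)\ne0$ (an entire factor that vanished at $\rho$ could in principle cancel the pole), and symmetrically that $\alpha$ being entire means it introduces no new finite singularities anywhere. Once this observation is in hand, every condition transfers mechanically, and since the mean/variance data are functions of $\rho(u)$ alone, the ``asymptotically the same'' conclusion is automatic. The hypothesis that $\alpha F$ has non-negative power-series coefficients is assumed outright in the statement, so it needs no proof; it is needed only so that the probabilistic interpretation in Theorem~\ref{lem:FS} makes sense.
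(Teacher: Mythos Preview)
Your proposal is correct and follows essentially the same approach as the paper: you replace $B$ by $\alpha B$ and keep $C$ and $\rho(u)$ unchanged, observe that an entire nowhere-zero factor neither creates nor cancels singularities so conditions (1)--(5) transfer, and conclude that $m$ and $v$ are unchanged because they depend only on $\rho(u)$. The paper's proof is terser but makes exactly these points.
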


\begin{proof}
Because $\alpha(z,u)$ is analytic and non-zero everywhere, $\alpha(z,u)\,F(z,u)$ has singularities at the same points and with the same orders as $F(z,u)$. Thus, we can take $\rho$ and $r$ to be the same, we can replace $B(z,u)$ with $\alpha(z,u)\,B(z,u)$, and we can take $C(z,u)$ and $\rho(u)$ to be the same. This shows that $\alpha(z,u)\,F(z,u)$ meets conditions (1) through (4) of Theorem \ref{lem:FS}. It also meets condition (5), since for each $u$ the singularities of $z \mapsto F(z,u)$ are identical to the singularities of $z \mapsto \alpha(z,u)\,F(z,u)$.

Finally, $m$ and $v$ depend only on $\rho$, so $m$ and $v$ are the same for $\alpha(z,u)\,F(z,u)$, and so the resulting mean and standard deviation are asymptotically the same.
\end{proof}

\begin{proof}[of Theorem \ref{thm:gaussianD}]
The proof that $\mu_d(n) = n/2$ is elementary: Let $\pi$ be a derangement of size $n$. For each $i \in [n]$, $i$ is an excedance of $\pi$ if and only if $\pi(i)$ is not an excedance of $\pi^{-1}$, precisely because $\pi$ has no fixed points. Thus the map $\pi \mapsto \pi^{-1}$ is an involution on the set of size-$n$ derangements that maps each derangement with $k$ excedances to a derangement with $n-k$ excedances. Therefore, $d(n,k) = d(n,n-k)$, and so the mean is $n/2$.

By Flajolet and Sedgewick \cite{FS}, Examples IX.12 and IX.35,
$A(z,u)$ satisfies the conditions of Theorem \ref{lem:FS}. Since $D(z,u) = e^{-z} A(z,u)$ and $e^{-z}$ is analytic and non-zero everywhere, it follows from Lemma \ref{lem:same} that $D(z,u)$ also satisfies the conditions of Theorem \ref{lem:FS}, and that the coefficients have the same asymptotic mean and standard deviation as those of $A(z,u)$.
\end{proof}

\begin{thm}
   \label{thm:Enasym}
$|\E_n| \sim \sqrt{\frac{3}{2\pi e}} \,n^{-1/2}\, 2^n n!$.
\end{thm}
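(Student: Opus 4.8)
The plan is to start from the exact formula in Theorem~\ref{thm:exacttotal}(a),
\[ |\E_n| \;=\; \sum_{m=0}^n \binom{n}{m}\, h_m, \qquad\text{where}\qquad h_m := \sum_{k=0}^m \binom{m}{k}\, d(m,k), \]
first to pin down the asymptotics of the inner sum $h_m$ as $m\to\infty$ using Lemma~\ref{lem.Gaussian}, and then to evaluate the outer sum over $m$, which will turn out to concentrate near $m=n$.

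\textbf{Step 1 (the inner sum).} I would apply Lemma~\ref{lem.Gaussian} with $\alpha_m(k)=\binom{m}{k}$ and $\beta_m(k)=d(m,k)$. Then $\alpha_m^\ast = 2^m$, and $\alpha_m(\cdot)/\alpha_m^\ast$ is the $\mathrm{Binomial}(m,\tfrac12)$ distribution, which has mean $m/2$, standard deviation $\sqrt m/2 \sim \sqrt{m/4}$, and (de Moivre--Laplace) converges in distribution when standardized to a Gaussian; so hypothesis~(i) holds with $B_1=\tfrac14$. Also $\beta_m^\ast = d(m)$, and by Theorem~\ref{thm:gaussianD} the distribution $\beta_m(\cdot)/\beta_m^\ast$ has mean $m/2$, standard deviation $\sim\sqrt{m/12}$, and obeys a Gaussian local limit law; so hypothesis~(ii) holds with $B_2=\tfrac1{12}$. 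Since both means are exactly $m/2$, hypothesis~(iii) is immediate. Lemma~\ref{lem.Gaussian} then gives
\[ h_m \;\sim\; \frac{2^m\, d(m)}{\sqrt{2\pi(B_1+B_2)\,m}} \;=\; \sqrt{\frac{3}{2\pi m}}\;2^m\,d(m) \qquad (m\to\infty), \]
and since $d(m)\sim m!/e$ by Lemma~\ref{lem:dclassic}(c), this becomes $h_m\sim\sqrt{\tfrac{3}{2\pi m}}\,\tfrac{2^m m!}{e}$.

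\textbf{Step 2 (the outer sum).} Substituting $m=n-j$ and using $\binom{n}{n-j}(n-j)! = n!/j!$, each term satisfies, for fixed $j$,
\[ \binom{n}{n-j}h_{n-j} \;\sim\; \frac{n!}{j!}\,\sqrt{\frac{3}{2\pi(n-j)}}\,\frac{2^{n-j}}{e} \;\sim\; \Bigl(\sqrt{\tfrac{3}{2\pi}}\,\tfrac1e\,n^{-1/2}\,2^n\,n!\Bigr)\cdot\frac{2^{-j}}{j!} \qquad (n\to\infty). \]
Summing the limit profile over $j\ge0$ yields $\sum_{j\ge0}2^{-j}/j! = e^{1/2}$, hence the candidate main term $\sqrt{\tfrac{3}{2\pi}}\,e^{-1/2}\,n^{-1/2}2^n n! = \sqrt{\tfrac{3}{2\pi e}}\,n^{-1/2}2^n n!$, as in the statement. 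To justify interchanging the limit and the summation I would use dominated convergence: for $0\le j\le n/2$, the crude bound $h_m\le\binom{m}{\lfloor m/2\rfloor}d(m)=O\!\bigl(2^m m!/\sqrt{m+1}\bigr)$ gives $\binom{n}{n-j}h_{n-j}\le C\,n^{-1/2}2^n n!\cdot 2^{-j}/j!$ with $C$ an absolute constant, providing a summable dominating sequence; and the remaining range $j>n/2$ (i.e.\ $m<n/2$) contributes at most $\sum_{m<n/2}\binom{n}{m}2^m m!\le (n/2)\cdot\dfrac{n!\,2^{n/2}}{(n/2)!}$, which is $o\!\bigl(n^{-1/2}2^n n!\bigr)$ by Stirling's formula.

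\textbf{Expected main obstacle.} The subtle point is Step~2: Lemma~\ref{lem.Gaussian} delivers the asymptotics of $h_m$ only pointwise in $m$, so the real work is the uniform tail control of the outer sum. In particular one must employ the sharper crude bound $h_m = O(2^m m!/\sqrt{m+1})$ --- the naive estimate $h_m\le 2^m m!$ would lose precisely the $n^{-1/2}$ factor in the answer --- so that the dominating sequence is compatible with the $n^{-1/2}$ appearing in the statement.
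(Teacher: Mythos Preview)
Your proposal is correct and follows essentially the same route as the paper: start from Theorem~\ref{thm:exacttotal}(a), apply Lemma~\ref{lem.Gaussian} with $\alpha_m(k)=\binom{m}{k}$ and $\beta_m(k)=d(m,k)$ (same $B_1=\tfrac14$, $B_2=\tfrac1{12}$) to get the asymptotics of the inner sum, then substitute $j=n-m$ and use dominated convergence over the summable weights $2^{-j}/j!$, with a separate crude bound for the tail $m<n/2$. The only cosmetic difference is how the domination is obtained: the paper observes that since $\sqrt{m}\,Q_m$ (your $h_m/(2^m m!)$ times $\sqrt m$) converges, it is bounded by some $Q^\ast$, which immediately yields the needed uniform estimate; you instead derive the equivalent bound directly from $h_m\le\binom{m}{\lfloor m/2\rfloor}d(m)=O(2^m m!/\sqrt{m})$.
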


\begin{proof}
Using the exact enumeration from Theorem \ref{thm:exacttotal}, we have
\begin{align}
    \nonumber
     \frac{  |\E_n| \,\sqrt{n}}{n!\,2^n} \; & = \;  
     \sum_{m=0}^n \binom{n}{m}  \frac{\sqrt{n}}{n!\,2^n}\sum_{k=0}^m \binom{m}{k} d(m,k) 
  \\   
  \nonumber
  & = \; \sqrt{n} \,\sum_{m=0}^n \frac{1}{m!(n-m)!} \,\frac{1}{2^{n-m}} \,\frac{ \sum_{k=0}^m \binom{m}{k}d(m,k)    %\theta_m d(m)
      }{2^m} 
  \\
 \label{eq.Ensum}
    & = \;  \sqrt{n}\, \sum_{w=0}^n \frac{2^{-w}}{w!} \,Q_{n-w}    \hspace{5mm}(\hbox{using $w=n-m$})
\end{align}
where we defined
\[   Q_m  \;:=\;    \frac{\sum_{k=0}^m \binom{m}{k}d(m,k)}{2^m m!} \,.
\]

We now apply Lemma \ref{lem.Gaussian} with 
$\alpha_n(k) =\binom{n}{k}$ and $\beta_n(k)=d(n,k)$.  Then we have
$\alpha_n^*=2^n$, $\mu_{\alpha}(n)=n/2$, and $\sigma_{\alpha}(n)=\sqrt{n/4}$, as well as 
$\beta_n^*=d(n)\sim n!/e$, $\mu_{\beta}(n)=n/2$, and
$\sigma_{\beta}(n)\sim \sqrt{n/12}$.  Assumption (\textit{i}) of the lemma, the convergence in 
distribution of $\alpha_n(k)$, is the classical
De Moivre--Laplace Central Limit Theorem (e.g.\ p.\ 186 of
\cite{Fell1}).   Assumption (\textit{ii}), 
the local limit law of $\beta_n(k)$, is Theorem \ref{thm:gaussianD} above.
The conclusion of Lemma \ref{lem.Gaussian} is
\begin{equation}
   \label{eq.Qlim}
       \lim_{m\rightarrow\infty}\sqrt{m}\, Q_m  \;=\;
       \frac{1}{e}\,\sqrt{  \frac{3}{2\pi} }   \,.   
\end{equation}
In particular, the sequence $\{\sqrt{m}\,Q_m\}$ is bounded.  Let
\[     Q^*  \;  :=\;   \sup_{m\in \mathbb{N}}  \left\{\sqrt{m}\,Q_m \right\}  \;<\; \infty  \,.   \]

We now break the sum of Equation (\ref{eq.Ensum}) into two parts, writing
\begin{equation}
     \label{eq.EAB}
      \frac{|\E_n|\sqrt{n}}{n!\,2^n}    \;=\;  C_n  \,+\,D_n \, ,  
\end{equation}
\[  \hbox{where} \hspace{15mm}  C_n  \;=\;    \sqrt{n}\, \sum_{w=0}^{\lfloor n/2 \rfloor} \frac{2^{-w}}{w!} \,Q_{n-w} 
     \hspace{5mm}\hbox{and}\hspace{5mm}
     D_n \;=\;     \sqrt{n}\, \sum_{w=\lfloor n/2\rfloor+1}^n \frac{2^{-w}}{w!} \,Q_{n-w} \,.
\]
Then 
\[    0\;\leq \; D_n  \; \leq  \;   \sqrt{n}\,Q^* \sum_{w=\lfloor n/2\rfloor+1}^n 2^{-w}
       \;<\;   \sqrt{n}\,Q^* \,2^{-\lfloor n/2 \rfloor}   \,,
\]
from which we see that
\begin{equation}
     \label{eq.Bto0}
         \lim_{n\rightarrow\infty}  D_n   \;=\; 0 \,.    
\end{equation}
To find the limit of $C_n$, we shall use the Dominated Convergence Theorem with respect to
the (finite) measure on the nonnegative integers ${\mathbb Z}_+$
that gives mass $2^{-w}/w!$ to each $w\in {\mathbb Z}_+$.
Define the sequence of functions $g_n$ on ${\mathbb Z}_+$ by
\[     g_n(w)   \; =\;   \begin{cases}
            \sqrt{n}   \,Q_{n-w}    & \hbox{if   $0\leq w \leq \lfloor n/2\rfloor$}   \\
            0   & \hbox{if   $w>\lfloor n/2\rfloor$}.
            \end{cases}
\]
Thus 
\begin{equation}
    \label{eq.Ansumfn}
       C_n  \,=\,  \sum_{w=0}^{\infty}\frac{2^{-w}}{w!}  \,g_n(w)   \,.
\end{equation}
Let $g$ be the constant function 
$g(w)\equiv \frac{1}{e}\,\sqrt{  \frac{3}{2\pi} }$\,.
By Equation (\ref{eq.Qlim}), $\lim_{n\rightarrow\infty}g_n(w)   \,=\,  g(w)$
for every $w\in \mathbb{Z}_+$.
%\[ \lim_{n\rightarrow}f_n(w)   \;=\;  f(w)   \hspace{5mm}\hbox{for every $w\in \mathbb{Z}_+$} \,.  \]
Moreover, we claim that
\begin{equation}
  \nonumber
          g_n(w)   \;\leq \;   \sqrt{2} \,Q^*  
              \hspace{5mm}\hbox{for every $n\in \mathbb{N}$ and $w\in \mathbb{Z}_+$} \,. 
\end{equation}
This inequality   is trivial if $w>\lfloor n/2\rfloor$, 
while for $0\leq w \leq \lfloor n/2\rfloor$ we have
\[     g_n(w)   \;\leq  \;   \frac{\sqrt{n}\,Q^*}{\sqrt{n-w}}    \;\leq \;    \frac{\sqrt{n}\,Q^*}{\sqrt{n-(n/2)}} 
     \;=\;   \sqrt{2}\,Q^* \,.
\]
This proves the claim.  

We have shown that the functions $g_n$ are uniformly bounded and converge
pointwise to $g$.  Therefore we can apply the Dominated Convergence Theorem:
\begin{align}
    \nonumber
     \lim_{n\rightarrow\infty}C_n    \; & =\;  \lim_{n\rightarrow\infty}  \sum_{w=0}^{\infty}\frac{2^{-w}}{w!}  \,g_n(w) \\
        \nonumber
          & = \;    \sum_{w=0}^{\infty}\frac{2^{-w}}{w!}  \,g(w)   \\
          \nonumber
        & = \;    \sum_{w=0}^{\infty}\frac{2^{-w}}{w!} \frac{1}{e}\,\sqrt{  \frac{3}{2\pi} }    \\
           \label{eq.limAn}
         & = \;    \frac{e^{1/2}}{e}\,\sqrt{  \frac{3}{2\pi} }  \,.
\end{align}          
Combining Equations (\ref{eq.EAB}), (\ref{eq.Bto0}), and (\ref{eq.limAn})  completes the proof of 
Theorem \ref{thm:Enasym}.
\end{proof}

\acknowledgements
\label{sec:ack}
We would like to thank Nathan Clisby for the initial suggestion 
of using periodic boundary conditions to study pattern avoidance in ordinary permutations, 
Cyril Banderier for pointing us in the direction of the cycle lemma and its orbit of ideas, and David Bevan for helpful discussions about supercritical and subcritical sequence schemas. We would also like to thank our anonymous referee for carefully reading our paper and providing helpful recommendations.

% Sorry, we already set up the bibliography this way. - Justin Troyka

\end{document}